\documentclass[12pt]{amsart}

\usepackage{amsmath}
\usepackage{amssymb}
\usepackage{amsthm}
\usepackage{mathtools}

\usepackage{caption}
\usepackage{floatrow}%

\usepackage[margin=2cm]{geometry}

\newcommand{\questionref}[1]{\hyperref[#1]{Ques\-tion~\ref*{#1}}}
\newcommand{\theoremref}[1]{\hyperref[#1]{The\-o\-rem~\ref*{#1}}}
\newcommand{\claimref}[1]{\hyperref[#1]{Claim~\ref*{#1}}}
\newcommand{\situationref}[1]{\hyperref[#1]{Situation~\ref*{#1}}}
\newcommand{\lemmaref}[1]{\hyperref[#1]{Lem\-ma~\ref*{#1}}}
\newcommand{\remarkref}[1]{\hyperref[#1]{Re\-mark~\ref*{#1}}}
\newcommand{\notationref}[1]{\hyperref[#1]{No\-ta\-tion~\ref*{#1}}}
\newcommand{\sectionref}[1]{\hyperref[#1]{Section~\ref*{#1}}}
\newcommand{\subsectionref}[1]{\hyperref[#1]{Sub\-sec\-tion~\ref*{#1}}}
\newcommand{\definitionref}[1]{\hyperref[#1]{Def\-i\-ni\-btion~\ref*{#1}}}
\newcommand{\propositionref}[1]{\hyperref[#1]{Prop\-o\-si\-tion~\ref*{#1}}}
\newcommand{\conjectureref}[1]{\hyperref[#1]{Con\-jec\-ture~\ref*{#1}}}
\newcommand{\corollaryref}[1]{\hyperref[#1]{Cor\-ol\-lar\-y~\ref*{#1}}}
\newcommand{\exerciseref}[1]{\hyperref[#1]{Ex\-er\-cise~\ref*{#1}}}
\newcommand{\figureref}[1]{\hyperref[#1]{Fig\-ure~\ref*{#1}}}

\usepackage[mathscr]{euscript}

\usepackage[all]{xy}

\xyoption{2cell}
\UseAllTwocells

\usepackage{multicol}

\usepackage{xr-hyper}

\usepackage{hyperref}
\usepackage[usenames,dvipsnames]{xcolor}
\hypersetup{colorlinks=true,citecolor=NavyBlue,linkcolor=BrickRed,urlcolor=Orange}

\theoremstyle{plain}
\newtheorem{theorem}{Theorem}[section]
\newtheorem{proposition}[theorem]{Proposition}
\newtheorem{lemma}[theorem]{Lemma}
\newtheorem{corollary}[theorem]{Corollary}

\theoremstyle{definition}
\newtheorem{definition}[theorem]{Definition}

\theoremstyle{remark}
\newtheorem{remark}[theorem]{Remark}

\newtheorem{notation}[theorem]{Notation}

\newtheorem{question}[theorem]{Question}


\usepackage{tikz}
\usepackage{tikz-cd}
\usetikzlibrary{matrix,arrows}
\usetikzlibrary{decorations.pathmorphing}

\def\lcm{\mathop{\mathrm{lcm}}\nolimits}

\DeclareMathOperator{\Pic}{Pic}

\DeclareMathOperator{\im}{Im}

\DeclareMathOperator{\Proj}{Proj}

\AtBeginDocument{%
   \def\MR#1{}
}

\usepackage{amscd}

\def\Pic{\operatorname{Pic}}

\newcommand{\bP}{\mathbb{P}}

\newcommand{\bZ}{\mathbb{Z}}
\newcommand{\bQ}{\mathbb{Q}}

\newcommand{\bC}{\mathbb{C}}

\newcommand{\Stab}{\mathrm{Stab}}


\newcommand{\calD}{\mathcal{D}} 
\newcommand{\calH}{\mathcal{H}} 

\newcommand{\calP}{\mathcal{P}}

\newcommand{\calO}{\mathcal{O}}



\newcommand{\calM}{\mathcal{M}}



\begin{document}

\title[Compactifications of the moduli space of plane quartics and two lines]{Compactifications of the moduli space of plane quartics and two lines}
\author{Patricio Gallardo}
\address{Department of Mathematics, Washington University in St. Louis}
\email{pgallardocandela@wustl.edu}
\author{Jesus Martinez-Garcia}
\address{Department of Mathematical Sciences, University of Bath.}
\email{J.Martinez.Garcia@bath.ac.uk}
\author{Zheng Zhang}
\address{Department of Mathematics, Texas A\&M University.}
\email{zzhang@math.tamu.edu}
\date{April 7, 2018}

\bibliographystyle{amsalpha}
\begin{abstract}
	We study the moduli space of triples $(C, L_1, L_2)$ consisting of quartic curves $C$ and lines $L_1$ and $L_2$. Specifically, we construct and compactify the moduli space in two ways: via geometric invariant theory (GIT) and by using the period map of certain lattice polarized $K3$ surfaces. The GIT construction depends on two parameters $t_1$ and $t_2$ which correspond to the choice of a linearization. For $t_1=t_2=1$ we describe the GIT moduli explicitly and relate it to the construction via $K3$ surfaces. 
\end{abstract}

\maketitle

\section{Introduction}
The construction of compact moduli spaces with geometric meanings is an important problem in algebraic geometry. In this article, we discuss the case of the moduli of $K3$ surfaces of degree 2 obtained as minimal resolutions of double covers of $\bP^2$ branched at a quartic $C$ and two lines $L_1$, $L_2$, for which we give two constructions, one via Geometric Invariant Theory (GIT) for the plane curves $(C, L_1, L_2)$ depending on a choice of two parameters for each of the lines, and one via the period map of $K3$ surfaces. For a particular choice of parameters, we show that the constructions agree. Similar examples include \cite{shah1980complete}, \cite{ls_cubic}, \cite{looijenga_cubic}, \cite{laza_n16, laza_cubic} and \cite{act_cubicsurface, act_cubic3fold}. 
Our interest on this example arose after the first two authors considered studying the variations of GIT quotients for a cubic surface and a hyperplane section \cite{gallardo2016moduli}. The moduli of del Pezzo surfaces of degree $2$ with two anti-canonical sections seems to be closely related to the moduli of $K3$ surfaces considered in this article, since del Pezzo surfaces of degree 2 with canonical singularities can be obtained as double-covers of $\mathbb P^2$ branched at a (possibly singular) quartic curve. Also, a generic global Torelli for certain double covers of these $K3$ surfaces (namely, minimal resolutions of bi-double covers of $\bP^2$ along a quartic and four lines, cf. \cite[\S 5.4.2]{garbagnati_doublecoverk3}) can be derived using the results in this article and the methods in \cite{pearlsteinzhang}. 
 
Following the general theory of variations of GIT quotients developed by Dolgachev and Hu \cite{dolgachev-hu-vgit} and independently by Thaddeus \cite{thaddeus-vgit}, we construct GIT compactifications $\overline{\calM}(t_1,t_2)$ for the moduli space of triples $(C, L_1, L_2)$ consisting of a smooth plane quartic curve $C$ and two labeled lines $L_1$, $L_2$ in \sectionref{sec:VGIT}. These compactifications depend on parameters $t_1,t_2$ which are the ratio polarizations of the parameter spaces of quartic and linear homogeneous forms representing $C$ and $L_1,L_2$. We generalize the study in \cite{gallardo2016variations} of GIT quotients of pairs $(X,H)$ formed by a hypersurface $X$ of degree $d$ in $\mathbb P^{n+1}$ and a hyperplane $H$ to tuples $(X, H_1, \ldots, H_k)$ with several hyperplanes $H_i$, considering the relation between the moduli spaces of tuples with labeled and unlabeled hyperplanes. We then apply the setting to the case at hand, namely plane quartic curves and two lines. One sees in \lemmaref{intVGIT} that the space where the set of stable points is not empty can be precisely described. Furthermore, given a particular tuple, we can bound the set of parameters for which it is semistable (cf. \lemmaref{lemma:2gen}).

Next we focus on the case when $t_1=t_2=1$. The moduli space $\overline{\calM}(1,1)$ 
can also be constructed via Hodge theory (cf. \sectionref{sec:Hodge}). The idea is to consider the $K3$ surface $S_{(C, L_1, L_2)}$ obtained by taking the desingularization of the double cover $\bar{S}_{(C,L_1,L_2)}$ of $\bP^2$ branched along the sextic curve $C+L_1+L_2$. Note that generically $\bar{S}_{(C,L_1,L_2)}$ admits nine ordinary double points (coming from the intersection points $C \cap L_1$, $C \cap L_2$ and $L_1 \cap L_2$). It follows that the $K3$ surface $S_{(C,L_1,L_2)}$ contains nine $(-2)$-curves which form a certain configuration. Call the saturated sublattice generated by these curves $M \subset \Pic(S_{(C, L_1, L_2)})$. Then the $K3$ surface $S_{(C,L_1,L_2)}$ is naturally $M$-polarized in the sense of Dolgachev \cite{dolgachev_MSK3}. Let $\calM_0 \subset \overline{\calM}(1,1)$ be the locus where the sextic curves $C+L_1+L_2$ have at worst simple singularities (also known as \emph{ADE singularities} or \emph{Du Val singularities}). By associating to the triples $(C,L_1,L_2)$ the periods of the $M$-polarized $K3$ surfaces $S_{(C,L_1,L_2)}$ one obtains a period map $\calP$ from $\calM_0$ to a certain period domain $\calD/\Gamma$. We shall prove that $\calP$ is an isomorphism. 

\begin{theorem}[\theoremref{P isomorphic}]
Consider the triples $(C, L_1, L_2)$ consisting of quartic curves $C$ and lines $L_1$, $L_2$ such that $C + L_1 + L_2$ has at worst simple singularities. Let $S_{(C,L_1,L_2)}$ be the $K3$ surface obtained by taking the minimal resolution of the double plane branched along $C + L_1 +L_2$. The map sending $(C, L_1, L_2)$ to the periods of $S_{(C,L_1,L_2)}$ extends to an isomorphism $\calP: \calM_0 \rightarrow \calD/\Gamma$.
\end{theorem}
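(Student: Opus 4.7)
The argument follows the standard recipe for Torelli-type results for lattice-polarized $K3$ surfaces: check that $\calP$ is well-defined and holomorphic, deduce injectivity from the Global Torelli Theorem for $K3$ surfaces, and deduce surjectivity from surjectivity of the $K3$ period map together with an analysis of the linear system cut out by the polarization.

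The first step is well-definedness. For a generic $(C,L_1,L_2) \in \calM_0$ the sextic $C+L_1+L_2$ has exactly nine nodes at $C\cap L_1$, $C\cap L_2$ and $L_1 \cap L_2$, and $S_{(C,L_1,L_2)}$ carries nine disjoint exceptional $(-2)$-curves together with the proper transforms of $L_1,L_2$ and the pullback of the hyperplane class. I would verify that the saturated sublattice $M$ these classes generate is independent of the triple (up to isometry), compute its rank and signature, and exhibit a primitive embedding $M \hookrightarrow \Lambda_{K3} = U^{\oplus 3} \oplus E_8(-1)^{\oplus 2}$ so that $\calD$ is the associated Type IV period domain and $\Gamma$ is the group of isometries of $M^{\perp}$ induced by $M$-polarized monodromy. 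When the sextic degenerates to a simple-singularity configuration, the nine exceptional curves rearrange into ADE root sublattices but the saturated lattice $M$ (and the flat limit of the $M$-polarization) is preserved, so the periods vary holomorphically across $\calM_0$ and $\calP$ extends.

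For injectivity, suppose $\calP(C,L_1,L_2) = \calP(C',L_1',L_2')$. Then there is a Hodge isometry $\phi$ between the transcendental lattices $\sT(S) = M^\perp \cap H^2(S,\bZ)$. Since $\Gamma$ is precisely the image of $M$-polarized monodromy, one may extend $\phi$ by the identity on $M$ (after composing with a suitable element of the orthogonal group of $M$) to a Hodge isometry of the full $K3$ lattice preserving an ample class. By the Global Torelli Theorem, this isometry is induced by an isomorphism $S \cong S'$. Because the deck involution of the double cover is recovered from the geometry of $M$ (it acts as $-1$ on the classes of the exceptional curves and fixes $h$), the isomorphism descends to an automorphism of $\bP^2$ sending the branch sextic to the branch sextic; tracking which $(-2)$-classes come from $L_1$ and from $L_2$ shows the identification respects the labeling, giving an equivalence of triples.

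For surjectivity I would use the surjectivity of the period map for $K3$ surfaces to obtain, for each period in $\calD/\Gamma$, an $M$-polarized $K3$ surface $S$. The distinguished class $h \in M$ with $h^2 = 2$ defines a morphism $\varphi_{|h|}\colon S \to \bP^2$, and after an appropriate Weyl-group element one may assume $h$ is nef and primitive; then $\varphi_{|h|}$ is a degree-two map onto $\bP^2$ whose branch locus is a plane sextic $B$. The effective classes in $M$ earmarked as the proper transforms of $L_1$ and $L_2$ give two lines whose sum $L_1+L_2$ is contained in $B$, and the residual component $C := B - L_1 - L_2$ is a plane quartic; the ADE configurations encoded in $M$ force the singularities of $B$ to be at worst simple, so $(C,L_1,L_2) \in \calM_0$. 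The main obstacle is this last step: one must rule out the ``unigonal'' degenerations where $|h|$ fails to give a double cover of $\bP^2$, and must ensure that the decomposition $B = C + L_1 + L_2$ is forced rather than merely allowed. I expect this to be handled by removing an explicit arithmetic hyperplane arrangement in $\calD$ before taking the quotient by $\Gamma$, and by invoking the VGIT description of $\overline{\calM}(1,1)$ from \sectionref{sec:VGIT} to match the resulting moduli on the source and target of $\calP$.
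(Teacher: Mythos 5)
Your overall architecture (well-definedness, injectivity via global Torelli, surjectivity via surjectivity of the lattice-polarized period map plus an analysis of $|h|$) matches the paper's, but the surjectivity step contains a genuine gap in the form of a wrong expectation. You write that the unigonal degenerations and the unwanted decompositions of the branch sextic $B$ should be handled ``by removing an explicit arithmetic hyperplane arrangement in $\calD$ before taking the quotient by $\Gamma$.'' That cannot work here: the theorem asserts an isomorphism onto all of $\calD/\Gamma$, and indeed the paper's later Baily--Borel comparison (\theoremref{P'gitbb}) relies on the hyperplane arrangement being \emph{empty}. The correct mechanism is that the lattice $M$ itself forbids these degenerations: in \propositionref{generic surjectivity} base-point-freeness of $h$ is forced by \theoremref{mayer} via a purely numerical comparison inside $M$ (a class $D$ with $D^2=0$, $(h,D)=1$ would produce base points on an actual $S_{(Q,L,L')}$, a contradiction), and the two lines are forced to lie in $B$ by Bezout because each of $l_1',l_2'$ meets four exceptional classes plus $\gamma$, so the image line meets $B$ with multiplicity at least $8>6$. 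For a general $M$-polarized $S$ (where $\Pic(S)$ may be larger than $M$) the paper instead invokes Laza's degree-$5$-pair machinery (\propositionref{P surjective}) applied to the two sublattices $M_1,M_2$ generated by $l_1',\gamma,\alpha_1,\dots,\alpha_4$ and $l_2',\gamma,\beta_1,\dots,\beta_4$. None of this is in your sketch, and without it the surjectivity claim is unsupported.

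A second, related gap is the extension of $\calP$ over the non-generic locus and the identification of $\Gamma$. The nine exceptional curves do not simply ``rearrange'': when $C+L_1+L_2$ acquires non-nodal simple singularities one must construct a distinguished $M$-polarization on the resolution (the paper uses Laza's normalized embeddings for degree $5$ pairs in \sectionref{normalized embedding}), show it fits in families, and show it is unique up to exactly the action of $\Sigma_4\times\Sigma_4$ permuting the points of $C\cap L_1$ and $C\cap L_2$. This last point is not cosmetic: it is the reason $\Gamma$ must be the extension $1\to O^*(T)\to\Gamma\to\Sigma_\alpha\times\Sigma_\beta\to 1$ rather than $O^*(T)$, and verifying that your ``monodromy-induced'' group coincides with this $\Gamma$ requires the explicit computation of \lemmaref{permutation}. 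Finally, a small factual slip: the deck involution of the double cover acts trivially, not as $-1$, on the classes of the exceptional $(-2)$-curves (it preserves each exceptional curve over an $A_1$ point); it acts as $-1$ only on the transcendental part. Once the polarization and base-point-freeness are in place, the paper concludes not by matching VGIT strata but simply by observing that $\calP$ is a bijective birational morphism between normal varieties and applying Zariski's Main Theorem.
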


The approach is analogue to the one used by Laza \cite{laza_n16}. Roughly speaking, we first consider the generic case where $C$ is smooth and $C+L_1+L_2$ has simple normal crossings. Then we compute the (generic) Picard lattice $M$ and the transcendental lattice $T = M_{\Lambda_{K3}}^{\perp}$ (see \propositionref{determine M T}), determine the period domain $\calD$ and choose a suitable arithmetic group $\Gamma$ (cf. \sectionref{sec:MK3}, N.B. $\Gamma$ is not the standard arithmetic group $O^*(T)$ used in \cite{dolgachev_MSK3} but an extension of $O^*(T)$). Finally we extend the construction to the non-generic case (using the methods and some results of \cite{laza_n16}) and apply the global Torelli theorem and the surjectivity of the period map for $K3$ surfaces to prove the theorem (cf. \sectionref{normalized embedding} and \sectionref{sec:surjectivity}). 

Note that the period domain $\calD$ is a type IV Hermitian symmetric domain. The arithmetic quotients of $\calD$ admit canonical compactifications called Baily-Borel compactifications. To compare the GIT compactification and the Baily-Borel compactification we consider a slightly different moduli space $\overline{\calM}^*$ (constructed by taking a quotient of the GIT quotient $\overline{\calM}(1,1)$) parameterizing triples $(C,L,L')$ consisting of quartic curves $C$ and unlabeled lines $L$, $L'$. In a similar manner, we construct a period map $\calP'$ and prove that $\calP'$ is an isomorphism between the locus $\calM^*_0 \subset \overline{\calM}^*$ where $C+L+L'$ has at worst simple singularities and a certain locally symmetric domain $\calD/\Gamma'$ (cf. \sectionref{unlabeled P'}). Moreover, we show in \corollaryref{cor:gitcll'} that $\overline{\calM}^* \setminus \mathcal M^*_0$  is the union of three points $\overline{III}(1)$, $\overline{III}(2a)$, $\overline{III}(2b)$ and five rational curves $\overline{II}(1)$, $\overline{II}(2a1)$, $\overline{II}(2a2)$, $\overline{II}(2b)$, $\overline{II}(3)$ whose incidence structure is describe in \figureref{fig:incidence}. The quasi-projective variety $\calM^*_0 \subset \overline{\calM}^*$ has codimension higher than $1$ and hence the period map $\calP'$ extends to the GIT compactification $\overline{\calM}^*$. Note also that $\calP'$ preserves the natural polarizations (the polarization of $\calM^*_0$ is induced by the polarization of the moduli of plane sextics and the polarization of $\calD/\Gamma'$ comes from the polarization of moduli of degree $2$ $K3$ surfaces). A proof similar to \cite[Thm. 7.6]{looijenga} shows that the extension of $\calP'$ induces an isomorphism between the GIT quotient $\overline{\calM}^*$ and the Baily-Borel compactification $(\calD/\Gamma')^*$ (see \sectionref{sec:GITBB}). Some computations and remarks on the Baily-Borel boundary components are also included in the paper (cf. \sectionref{sec:BB}). 
\begin{theorem}
[\theoremref{P'gitbb}]
The period map $\calP': \calM_0^* \rightarrow \calD/\Gamma'$ extends to an isomorphism of projective varieties $\overline{\calP'}: \overline{\calM}^* \stackrel{\cong}{\rightarrow} (\calD/\Gamma')^*$ where $(\calD/\Gamma')^*$ denotes the Baily-Borel compactification of $\calD/\Gamma'$.  
\end{theorem}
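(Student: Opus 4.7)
The plan is to follow Looijenga's strategy in the proof of \cite[Thm.~7.6]{looijenga}. Two key ingredients are already in hand: the isomorphism $\calP': \calM_0^* \stackrel{\cong}{\rightarrow} \calD/\Gamma'$, and the fact that $\calP'$ preserves the natural polarizations. Let $\lambda$ denote the GIT polarization on $\overline{\calM}^*$, inherited from the $\calO(1)$ on the parameter space of branch sextics $C + L + L'$, and let $\Lambda$ denote the Hodge line bundle on $\calD/\Gamma'$, whose $\Gamma'$-invariant sections are the automorphic forms and which extends canonically to the ample Baily--Borel polarization on $(\calD/\Gamma')^*$ by Koecher's principle. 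Polarization-preservation amounts to the assertion that $(\calP')^* \Lambda$ and $\lambda|_{\calM_0^*}$ agree after passing to a common positive power $k$.

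By the explicit description of the boundary in \corollaryref{cor:gitcll'}, $\overline{\calM}^* \setminus \calM_0^*$ consists of finitely many isolated points and rational curves, and hence has codimension at least two in the (normal, $10$-dimensional) projective variety $\overline{\calM}^*$. Restriction therefore induces isomorphisms
\[ H^0(\overline{\calM}^*, \lambda^{\otimes kn}) \stackrel{\cong}{\rightarrow} H^0(\calM_0^*, \lambda^{\otimes kn}|_{\calM_0^*}) \cong H^0(\calD/\Gamma', \Lambda^{\otimes n}) \cong H^0((\calD/\Gamma')^*, \Lambda^{\otimes n}) \]
for every $n \geq 0$, where the middle isomorphism uses $\calP'$ together with the polarization identification, and the final one is Koecher. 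Since $\overline{\calM}^* = \Proj \bigoplus_n H^0(\overline{\calM}^*, \lambda^{\otimes kn})$ by the GIT construction and $(\calD/\Gamma')^* = \Proj \bigoplus_n H^0((\calD/\Gamma')^*, \Lambda^{\otimes n})$ by the Baily--Borel construction, passing to $\Proj$ of the matched graded rings yields the desired isomorphism $\overline{\calP'}: \overline{\calM}^* \stackrel{\cong}{\rightarrow} (\calD/\Gamma')^*$ extending $\calP'$.

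The principal technical point is verifying that $\calP'$ identifies $\lambda$ and $\Lambda$ up to a common positive power; the rest is essentially formal. On $\calM_0^*$, $\Lambda$ is (a power of) the determinant of the Hodge bundle $p_* \Omega^2_{\calS/\calM_0^*}$ attached to the universal family $p: \calS \to \calM_0^*$ of $M$-polarized $K3$ surfaces, while $\lambda$ descends from the tautological $\calO(1)$ on the sextic parameter space. The required proportionality reflects the compatibility of the double-cover construction $(C,L,L') \mapsto S_{(C,L,L')}$ with the period map, and can be verified by a Kodaira--Spencer-type computation of the periods of $S_{(C,L,L')}$ in terms of the defining equation of the branch sextic, together with some care along the non-generic strata of $\calM_0^*$ where the branch sextic acquires simple singularities. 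I expect this polarization comparison to be the main obstacle; once established, the chain of isomorphisms displayed above immediately yields the theorem.
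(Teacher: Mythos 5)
Your overall skeleton is sound: extending sections of the GIT polarization across a codimension~$\geqslant 2$ boundary of a normal variety, invoking Koecher's principle on the Baily--Borel side, and matching the two graded section rings before taking $\Proj$ is exactly the mechanism behind the result the paper actually cites, namely \cite[Theorem 7.6]{looijenga} (in the special case of an empty hyperplane arrangement, where the Looijenga compactification is the Baily--Borel compactification). The paper simply quotes that theorem after checking its two hypotheses --- the codimension condition, which follows from \corollaryref{cor:gitcll'} and \lemmaref{corollary:dimension-moduli} exactly as you say, and the compatibility of polarizations --- so in effect you are re-deriving the black box rather than citing it, which is legitimate.

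The genuine gap is the polarization comparison, and you have not closed it. You first list it among the ingredients ``already in hand,'' then correctly identify it as ``the principal technical point,'' and finally defer it to an unexecuted ``Kodaira--Spencer-type computation'' along with ``some care along the non-generic strata.'' Since this is the only substantive hypothesis of the whole argument (everything else in your chain of isomorphisms is formal once it holds), the proof is incomplete as written. Note also that the direct computation you propose is unnecessary: the paper obtains the compatibility by \emph{restriction} from a known case. The map $(C,L,L') \mapsto C+L+L'$ embeds the present situation into the moduli of plane sextics, and $S_{(C,L,L')}$ is the associated degree~$2$ $K3$ surface; the GIT polarization on sextics and the automorphic polarization on the period domain of degree~$2$ $K3$ surfaces are already known to correspond under the Shah--Looijenga isomorphism (see \cite[\S 8]{looijenga}), and both polarizations in our problem are pullbacks of these along the respective inclusions. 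Replacing your proposed period computation by this restriction argument would complete the proof.
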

\begin{center}
\begin{figure}[h!]
\caption{ 
Incidence relations among the boundary components of the compactification of $\mathcal M^*_0$ in  
$\overline{\calM}^*$. We denote $A \to B$ when the boundary component $B$ is contained in the closure of the boundary component $A$.
}
\label{fig:incidence}
$$
\xymatrix{
\overline{II}(2a2)  \ar[r] 
& \overline{III}(2b)  
& \overline{II}(1)   \ar[l] \ar[rd] & 
\\
\overline{II}(2b) \ar[ru] \ar[rd] &  && 
\overline{III}(1)
\\
\overline{II}(2a1) \ar[r] & 
\overline{III}(2a)  
&\overline{II}(3)\ar[ur]  \ar[l]  &
}
$$
\end{figure}
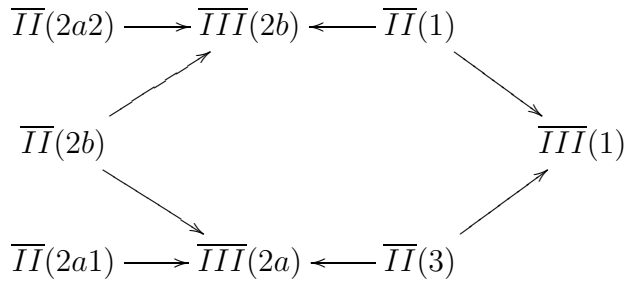
\end{center}

We conclude by the following remarks. The moduli space of quartic triples $(C,L_1,L_2)$ is closely related to the moduli space of degree $5$ pairs (cf. \cite[Def. 2.1]{laza_n16}) consisting of a quintic curve and a line (i.e. given a triple $(C, L_1, L_2)$ that we consider, compare it with the pairs $(C+L_1, L_2)$ and $(C+L_2, L_1)$). Motivated by studying deformations of $N_{16}$ singularities, Laza \cite{laza_n16} has constructed the moduli space of degree $5$ pairs using both the GIT and Hodge theoretic approaches. His work is an important motivation for us and the prototype of what we do here. Also, the study of singularities and incidences lines on quartic curves is a classical topic (see for example the work of Edge \cite{Edge1, Edge2}) and a classifying space for such pairs may be related to our GIT compactification.


\subsection*{Acknowledgments}
We thank Radu Laza and Gregory Pearlstein for useful discussions.  P. Gallardo is supported by the NSF grant DMS-1344994 of the RTG in Algebra, Algebraic Geometry, and Number Theory, at the University of Georgia. J. Martinez-Garcia acknowledges support from the Max Planck Institute for Mathematics in Bonn and would like to thank the Simons Foundation for its support of his research under the Simons Collaboration on Special Holonomy in Geometry, Analysis and Physics (grant \#488631, Johannes Nordstr\"om). This work was partially carried out at the Institute for Computational and Experimental Research in Mathematics (ICERM) during a visit by the authors as part of a Collaborate@ICERM project. The authors would like to thank ICERM for making this visit possible.

\section{Variations of GIT quotients}
\label{sec:VGIT}

In \cite{gallardo2016variations} the first two authors introduced a computational framework to construct all GIT quotients of pairs $(X,H)$ formed by a hypersurface $X$ of degree $d$ and a hyperplane $H$ in $\mathbb P^{n+1}$. They drew from the general theory of variations of GIT quotients developed by Dolgachev and Hu \cite{dolgachev-hu-vgit} and independently by Thaddeus \cite{thaddeus-vgit}. The motivation was to construct compact moduli spaces of log pairs $(X,D=X\cap H)$ where $X$ is Fano or Calabi-Yau. In this article we need to extend this setting to the case of tuples $(C,L_1,L_2)$ where $C$ is a plane quartic curve and $L_1,L_2$ are lines. However, extending our work in \cite{gallardo2016variations} to two hyperplanes entails the same difficulties as for an arbitrary number of hyperplanes, while the dimension does not play an important role in the setting. Therefore we will consider the most general setting of a hypersurface in projective space and $k$ hyperplane sections. 

\subsection{Variations of GIT quotients for $n$-dimensional hypersurfaces of degree $d$ together with $k$ (labelled) hyperplanes}
Let $\mathcal R=\mathcal R_{n,d,k}$  be the  parameter scheme of tuples $(F_d,l_1,\ldots,l_k)$, where $F_d$ is a polynomial of degree $d$ and $l_1,\ldots, l_k$ are linear forms in variables $(x_0,\ldots, x_{n+1})$, modulo scalar multiplication.
We have that
\begin{align*}
\mathcal R_{n,d, k}
& =\mathbb P(H^0(\mathbb P^{n+1}, \mathcal O_{\mathbb P^{n+1}}(d)))\times \mathbb P(H^0(\mathbb P^{n+1}, \mathcal O_{\mathbb P^{n+1}}(1)))\times \cdots \times \mathbb P(H^0(\mathbb P^{n+1}, \mathcal O_{\mathbb P^{n+1}}(1)))
\\
&
\cong \mathbb P^N\times (\mathbb P^{n+1})^k,
\end{align*}
where $N= {\binom{n+1+d}{d}}-1$ and natural projections $\pi_0\colon \mathcal R_{n,d,k}\rightarrow \mathbb P^N$, $\mathbb \pi_i\colon \mathcal R_{n,d,k}\rightarrow \mathbb P^{n+1}$ for $i=1,\ldots, k$. The natural action of $G=\mathrm{SL}_{n+2}$ in $\mathbb P^{n+1}$ extends to each of the factors in $\mathcal R_{n,d,k}$ and therefore to $\mathcal R_{n,d,k}$ itself. The set of $G$-linearizable line bundles $\Pic^G(\mathcal R)$ is isomorphic to $\mathbb{Z}^{n+1}$. Then a line bundle $\mathcal L \in\Pic^{G}(\mathcal R),$ is ample if and only if $a>0$, $b_i>0$ for $i=1,\ldots, k$, where 
$$\mathcal L=\mathcal{O}(a,b_1,\ldots, b_k)\coloneqq\pi_0^*(\mathcal O_{\mathbb P^N}(a))\bigotimes_{i=1}^k\pi_i^*(\mathcal O_{\mathbb P^{n+1}}(b_i))\in\Pic^{G}(\mathcal R).$$
The latter is a trivial generalization of \cite[Lemma 2.1]{gallardo2016variations}. Hence, for $\mathcal L\cong\mathcal O(a,b_1,\ldots, b_k)$,  the  GIT quotient is defined as:
$$\overline{\mathcal M}(t_1,\ldots,t_k)=\overline{\mathcal M}(\vec{t}~)=\overline{\mathcal M}(\vec t~)_{n,d,k}=\Proj\left(\bigoplus_{m\geqslant 0} H^0(\mathcal R,\mathcal L^{\otimes m})^{G}\right),$$
where $t_i=\frac{b_i}{a}$. Next, we explain why it is enough to consider the vector $\vec t=(t_1,\dots, t_k)$ instead of $(a;,b_1,\ldots, b_k)$. Let us introduce some notation.

Given a maximal torus $T\cong \mathbb C^{n+2}\subset G$, we can choose projective coordinates $(x_0,\ldots, x_{n+1})$ such that $T$ is diagonal in $G$. Hence, any one-parameter subgroup $\lambda:\mathbb C^*\rightarrow T$ is a diagonal matrix with diagonal entries $s^{r_i}$ where $r_i\in \mathbb Z$ for all $i$ and $\sum_{i=0}^{n+1} r_i=0$. We say that $\lambda$ is \emph{normalized} if $r_0\geqslant \cdots \geqslant r_{n+1}$ and $\lambda$ is not trivial. Any homogeneous polynomial $g$ of degree $d$ can be written as $g=\sum_{I}g_Ix^I$, where $x^I=x^{d_0}_0\cdots x_{n+1}^{d_{n+1}}$, $I=(d_0,\ldots,d_{n+1})\in \mathbb Z^{n+2}$, $\sum_{i=0}^{n+1}=d$ and $g_I\in \mathbb C$. The support of $g$ is $\mathrm{Supp}(g)=\{x^I\ \colon \ g_I\neq 0\}$. We have a natural pairing $\langle x_0^{d_0}\cdot \cdots \cdot x_{n+1}^{d_{n+1}},\lambda\rangle:=\sum_{i=0}^{n+1}d_ir_i$, which we use to introduce the \emph{Hilbert-Mumford function} for homogeneous polynomials:
$$\mu(g,\lambda)\coloneqq\min\{\langle I,\lambda\rangle \ | \ x^I\in \mathrm{Supp}(g)\}.$$
Define
\begin{align*}
\mu_{\vec t}~((f,l_1,\ldots,l_k),\lambda) &\coloneqq\mu(f,\lambda)+\sum_{i=1}^kt_i\mu(l_i,\lambda),
\end{align*}
which is piecewise linear on $\lambda$ for fixed $(f,l_1,\ldots,l_k)$. Since the Hilbert-Mumford function is functorial\cite[Definition 2.2, cf. p. 49]{Mumford-GIT}, we can generalize \cite[Lemma 2.2]{gallardo2016variations} to show that a tuple $(f,l_1,\ldots,l_k)$ is (semi-)stable with respect to a polarisation $\mathcal L=\mathcal O(a,b_1,\ldots, b_k)$ if and only if
$$\mu^{\mathcal L}((f,l_1,\ldots,l_k),\lambda)=a\mu(f,\lambda)+\sum_{i=1}^kb_i\mu(l_i,\lambda)=a\mu_{\vec t}~((f,l_1,\ldots,l_k),\lambda)$$
is negative (respectively, non-positive) for any normalized non-trivial one-parameter subgroup $\lambda$ of any maximal torus $T$ of $G$. Hence the stability of a tuple is independent of the scaling of $\mathcal L$ and as such, we may define:
\begin{definition}
Let $\vec t\in (\mathbb Q_{\geqslant 0})^k$. 
The tuple $(f,l_1,\ldots,l_k)$ is \emph{$\vec t$-stable} (respectively \emph{$\vec t$-semistable}) if $\mu_{\vec t}~(f,l_1,\cdots,l_k,\lambda)<0$ (respectively $\mu_{\vec t}~(f,l_1,\cdots,l_k,\lambda)\leqslant 0$) for all non-trivial normalized one-parameter subgroups $\lambda$ of $G$. A tuple $(f,l_1,\ldots,l_k)$ is \emph{$\vec t$-unstable} if it is not $\vec t$-semistable. A tuple $(f,l_1,\ldots,l_k)$ is \emph{strictly $\vec t$-semistable} if it is $\vec t$-semistable but not $\vec t$-stable. 
\end{definition}
Notice that the stability of a tuple $(f,l_1,\ldots,l_k)$ is completely determined by the support of $f$ and $l_1,\ldots,l_k$. Moreover, notice that the $\vec t$-stability of a tuple is invariant under the action of $G$. Hence, we may say that a tuple $(X,H_1,\ldots, H_k)$ formed by a hypersurface $X\subset \mathbb  P^{n+1}$ and hyperplanes $H_i\subset \mathbb P^{n+1}$ is \emph{$\vec t$-stable} (respectively, \emph{$\vec t$-semistable}) if some (and hence any) tuple of homogeneous polynomials $(f,l_1,\ldots,l_k)$ defining $(X,H_1,\ldots,H_k)$ is \emph{$\vec t$-stable} (respectively, \emph{$\vec t$-semistable}). A tuple $(X,H_1,\ldots, H_k)$ is \emph{$\vec t$-unstable} if it is not $\vec t$-semistable.

In \cite{gallardo2016variations}, for fixed torus $T$ in $G$, we introduced the \emph{fundamental set} $S_{n,d}$ \emph{of one-parameter subgroups} ---a finite set--- and we showed that if $k=1$ it was sufficient to consider the one-parameter subgroups in $S_{n,d}$ for each $T$ to determine the $\vec t$-stability of any $(X,H_1)$. Let us recall the definition ---slightly simplified from the original \cite[Definition 3.1]{gallardo2016variations}--- and extend the result to any $k$.

\begin{definition}
\label{definition:set-S}
The \emph{fundamental set} $S_{n,d}$ \emph{of one-parameter subgroups} 
$\lambda \in T$ consists of all elements $\lambda=\mathrm{Diag}(s^{r_0},\ldots,s^{r_{n+1}})$ where
$$(r_0,\ldots,r_{n+1})=c(\gamma_0,\ldots,\gamma_{n+1})\in \mathbb Z^{n+1}$$
satisfying the following:
\begin{itemize}
\item[(1)] $\gamma_i=\frac{\alpha_i}{\beta_i}\in \mathbb Q$ such that $\gcd(\alpha_i,\beta_i)=1$ for all $i=0,\ldots,n+1$ and $c=\mathrm{lcm}(\beta_0,\ldots,\beta_{n+1})$.
\item[(2)] $1=\gamma_0\geqslant \gamma_1\geqslant \cdots\geqslant \gamma_{n+1}=-1-\sum_{i=1}^n\gamma_i.$
\item[(3)] $(\gamma_0,\ldots,\gamma_{n+1})$ is the unique solution of a consistent linear system given by $n$ equations chosen from the following set:
\begin{align}
&\mathrm{Eq}(n,d)\coloneqq\left\{\sum_{i=0}^{n+1}\delta_i\gamma_i = 0 \ | \ \delta_i\in \mathbb Z_{\geqslant 0}, -d\leqslant \delta_i\leqslant d \text{ for all } i \text{ and } \sum_{i=0}^{n+1} \delta_i=0\right\}.\nonumber
\end{align}
\end{itemize}
\end{definition}
The set $S_{n,d}$ is finite since there are a finite number of monomials of degree $d$ in $n+2$ variables. Observe that $S_{n,d}$ is independent of the value of $k$. The following lemma is a straight forward generalization of \cite[Lemma 3.2]{gallardo2016variations} which we include here for the convenience of the reader:
\begin{lemma}
\label{lemma:finiteness-lemma}
A tuple $(X,H_1,\ldots,H_k)$ given by equations $(f,l_1,\ldots,l_k)$ is not $\vec t$-stable (respectively not $\vec t$-semistable) if and only if there is $g \in G$ satisfying
$$\mu_{\vec t}~(X,H) \coloneqq \max_{\substack{\lambda\in S_{n,d}}}\{\mu_{\vec t}~((g \cdot f, g \cdot l_1,\ldots, g\cdot l_k),\lambda)\} \geqslant 0 \qquad (\text{respectively }>0).$$
Moreover $S_{n,d}\subseteq S_{n,d+1}$. 
\end{lemma}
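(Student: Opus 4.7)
The plan is to combine the Hilbert-Mumford numerical criterion with a piecewise-linearity argument for $\mu_{\vec t}(\cdot,\lambda)$ as a function of $\lambda$, following very closely the argument of \cite[Lem.~3.2]{gallardo2016variations} but keeping track of the additional linear factors. The only honest new ingredient is noticing that the ``walls'' introduced by each $l_i$ are already contained in $\mathrm{Eq}(n,d)$, so the fundamental set $S_{n,d}$ does not have to be enlarged.

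First I would invoke the Hilbert-Mumford criterion for the product linearization $\mathcal O(a,b_1,\ldots,b_k)$ as discussed in the paragraph preceding the definition of $\vec t$-stability: $(f,l_1,\ldots,l_k)$ is not $\vec t$-stable (resp.\ not $\vec t$-semistable) iff there exists a non-trivial $1$-PS $\lambda$ of $G$ with $\mu_{\vec t}((f,l_1,\ldots,l_k),\lambda)\geqslant 0$ (resp.\ $>0$). Since all maximal tori of $G$ are conjugate and $\mu$ is functorial, we have $\mu_{\vec t}((f,l_1,\ldots,l_k),g^{-1}\lambda g)=\mu_{\vec t}((g\cdot f,g\cdot l_1,\ldots,g\cdot l_k),\lambda)$, so it is enough to restrict to $\lambda\in T$, and after reordering coordinates we may take $\lambda$ normalized.

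Next, I would parametrize the cone of normalized $1$-PSs in $T$ by $(\gamma_0,\ldots,\gamma_{n+1})\in\mathbb Q^{n+2}$ with $\gamma_0=1$, $\gamma_0\geqslant\cdots\geqslant\gamma_{n+1}$, $\sum\gamma_i=0$, producing a rational polytope $\Pi$ of dimension $n$. On $\Pi$, the function
$$\mu(f,\cdot)=\min_{x^I\in\mathrm{Supp}(f)}\sum_{i=0}^{n+1}d_i\gamma_i$$
is a minimum of linear functionals and is therefore concave piecewise linear; its walls are exactly equations $\sum(d_i-d_i')\gamma_i=0$ coming from pairs of monomials $x^I,x^{I'}$ of degree $d$, hence of the form $\sum\delta_i\gamma_i=0$ with $\delta_i\in\mathbb Z$, $|\delta_i|\leqslant d$, $\sum\delta_i=0$, i.e. elements of $\mathrm{Eq}(n,d)$. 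Each $\mu(l_i,\cdot)$ is piecewise linear as well, with walls $\sum\delta_j\gamma_j=0$ where $|\delta_j|\leqslant 1\leqslant d$ and $\sum\delta_j=0$, hence also in $\mathrm{Eq}(n,d)$. Therefore $\mu_{\vec t}((f,l_1,\ldots,l_k),\cdot)$ is concave piecewise linear on $\Pi$ with walls in $\mathrm{Eq}(n,d)$.

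A concave piecewise linear function on the polytope $\Pi$ attains its maximum at a vertex. A vertex of $\Pi$ is a point cut out by $n$ independent wall equations from $\mathrm{Eq}(n,d)$ together with the normalization $\gamma_0=1$ and $\sum\gamma_i=0$. Clearing denominators to produce an integral $1$-PS yields exactly an element of $S_{n,d}$ as defined in \definitionref{definition:set-S}. Hence $\max_{\lambda\in T\text{ normalized}}\mu_{\vec t}((f,\ldots),\lambda)=\max_{\lambda\in S_{n,d}}\mu_{\vec t}((f,\ldots),\lambda)$, and combining with the conjugation step of the first paragraph gives the claimed equivalence (with $\geqslant 0$ for non-stability and $>0$ for non-semistability). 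The final inclusion $S_{n,d}\subseteq S_{n,d+1}$ is immediate from the tautological inclusion $\mathrm{Eq}(n,d)\subseteq\mathrm{Eq}(n,d+1)$ given by enlarging the bound $|\delta_i|\leqslant d$ to $|\delta_i|\leqslant d+1$.

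The main (minor) obstacle is bookkeeping: one has to verify carefully that the walls from the degree-$1$ forms $l_i$ are absorbed by $\mathrm{Eq}(n,d)$ and that the vertex/equation count matches the dimension of $\Pi$, so that the vertices really are parameterized by $n$-element subsystems of $\mathrm{Eq}(n,d)$ as in \definitionref{definition:set-S}(3). Once this is in place, the result for arbitrary $k$ follows by the same argument as for $k=1$ with no new ideas.
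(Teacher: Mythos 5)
Your proposal is correct and follows essentially the same route as the paper's proof: reduce to normalized one-parameter subgroups in a fixed torus via conjugacy and functoriality of $\mu$, observe that $\mu_{\vec t}(\cdot,-)$ is piecewise linear on the polytope of normalized $1$-PSs with all walls (including those contributed by the linear forms $l_i$, since $|\delta_j|\leqslant 1\leqslant d$) lying in $\mathrm{Eq}(n,d)$, and conclude that the relevant extremum is attained at a vertex of the induced subdivision, i.e.\ at an element of $S_{n,d}$. The only cosmetic remark is that the appeal to concavity is not what makes the ``maximum at a vertex'' step work --- what you actually use (and correctly set up) is that the function is affine on each cell of the wall subdivision, so its maximum over the compact polytope is attained at a vertex of that subdivision; this is the same mechanism as in the paper.
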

\begin{proof}
Let $(R^{ns}_{T})_{\vec t}$ be the non-${\vec t}$-stable loci of $\mathcal R$ with respect to a maximal torus $T$, and let 
$(\mathcal R^{ns})_{\vec t}$ be the non $\vec t$-stable loci of $\mathcal R$.

By \cite[p 137.]{dolgachev-lectures-git}), $(\mathcal R^{ns})_{\vec t} = \bigcup_{T_i \subset G} (R_{T_i}^{ns})_{\vec t}$. Let  $(f,l_1,\ldots,l_k)$ be the equations in some coordinate system ---inducing a maximal torus $T\subset G$--- of a non $\vec t$-stable tuple $(X,H_1,\ldots, H_k)$. Then, $\mu_{\vec t}~((f,l_1,\ldots,l_k),\rho)\geqslant 0$ for some $\rho \in T'$ in a maximal torus $T'$ which may be \emph{different} from $T$. All the maximal tori are conjugate to each other in $G$, and by \cite[Exercise 9.2.(i)]{dolgachev-lectures-git}, we have $\mu_{\vec t}~((f,l_1,\ldots,l_k), \rho) = \mu_{\vec t}~(g \cdot (f,l_1,\ldots,l_k), g\rho g^{-1})$ for all $g\in G$. Hence, there is $g_0 \in G$ such that $\lambda:=g_0\rho g_0^{-1}\in T$ is normalized and $(f',l_1',\ldots,l_k')\coloneqq g_0\cdot(f,l_1,\ldots,l_k)$ satisfies $\mu_{\vec t}~((f',l_1',\ldots,l_k'),\lambda)\geqslant 0$.  Normalized one-parameter subgroups in the coordinate system induced by $T$ are the intersection of $\sum r_i=0$ and the convex hull of $r_i-r_{i+1}\geqslant 0$, where $i=0,\ldots, n$. The restriction of the $n+1$ linearly independent inequalities in $n+1$ variables to $\sum r_i=0$ give a closed convex polyhedral subset $\Delta$ of dimension $n+1$ (in fact, a simplex) in the $\mathbb Q$-lattice of characters of $T$ ---isomorphic to the lattice of monomials (in variables $x_0,\ldots, x_{n+1}$) tensored by $\mathbb Q$, which in turn is isomorphic to $\mathbb Q^{n+2}$. 

Given a fixed $(f,l_1,\ldots,l_k)$, the function $\mu_{\vec t}~((f,l_1,\ldots,l_k),-):\mathbb Q^{n+2}\rightarrow \mathbb Q$ is piecewise linear and its critical points ---the points in $\mathbb Q^{n+2}$ where $\mu_{\vec t}~((f,l_1,\ldots,l_k),-)$ fails to be linear--- correspond to those monomials $x^I,x^{I'}\in \mathrm{Supp}(f)$ such that $\langle x^I, \lambda\rangle = \langle{x^{I'}},\lambda\rangle$, or equivalently, the points $\lambda\in \mathbb Q^{n+2}\cap \Delta$ such that $\langle x^{I-I'},\lambda\rangle=0$ for some $x^I,x^{I'}\in \mathrm{Supp}(f)$. These points define a hyperplane in $\mathbb Q^{n+2}$ and the intersection of this hyperplane with $\Delta$ is a simplex $\Delta_{x^I,x^{I'}}$ of dimension $n$. As $\mu_{\vec t}~((f,l_1,\ldots,l_k),-)$ is linear on the complement of $\Delta_{x^I,x^{I'}}$, the minimum of $\mu_{\vec t}~((f,l_1,\ldots,l_k),-)$ is achieved on the boundary, i.e. either on $\partial\Delta$ or on $\Delta_{x^I,x^{I'}}$ (for some $I$, $I'$), all of which are convex polytopes of dimension $n$. By finite induction, we conclude that the minimum of $\mu_{\vec t}~((f,l_1,\ldots,l_k, -)$ is achieved at one of the vertices of $\Delta$ or $\Delta_{x^I,x^{I'}}$, which correspond precisely, up to multiplication by a constant, to the finite set of one-parameter subgroups in $S_{n,d}$. Indeed, observe that if $\lambda=\mathrm{Diag}(s^{r_0},\ldots, s^{r_{n+1}})$ is one such vertex, then $0=\langle x^{I-I'},\lambda\rangle=\sum_{i=0}^{n+1}\delta_i\gamma_i$ for some  $\delta=(\delta_0,\ldots,\delta_{n+1})=I-I'$ where $\sum_{i=0}^{n+1}=0$ and $-d\leqslant \delta_i\leqslant d$. In addition, observe that we can find one such $\delta$ so that $0=\sum_{i=0}^{n+1}\delta_i\gamma_i=\gamma_i-\gamma_{i+1}$, thus giving the equations determining the maximal facets of $\Delta$, i.e. those where $r_i=r_{i+1}$. The lemma follows from the observation that $\mathrm{Eq}(n,d)\subset\mathrm{Eq}(n,d+1)$.
\end{proof}

\begin{definition}
The \emph{space of GIT stability conditions} is
$$\mathrm{Stab}(n,d,k)\coloneqq\{\vec t\in (\mathbb Q_{\geqslant 0})^k \ \colon \ \text{ there is a }\vec t-\mathrm{semistable }  (X,H_1,\ldots, H_k)\}.$$
\end{definition}

The space of GIT stability conditions is bounded, as it can be realized as a hyperplane section of $\mathrm{Amp}^G(\mathcal R)$. Since $\mathcal R$ is a product of vector spaces (and hence a Mori dream space), $\mathrm{Stab}(n,d,k)$ is also a rational polyhedron. It is possible to precisely describe it and we will do this later for $\mathrm{Stab}(1,d,2)$. Moreover, there is a finite number of non-isomorphic GIT compactifications $\overline{\mathcal M}(\vec t~)$ as $\vec t\in \mathrm{Stab}(n,d,k)$ in varies. Therefore we have a natural division of $\mathrm{Stab}(n,d,k)$ into a finite number of disjoint rational polyhedrons of dimension $k$ called \emph{chambers} and the intersection of any two-chambers is a (possibly empty) rational polyhedron of smaller dimension which we will call a \emph{wall} \cite[Theorem 0.2.3]{dolgachev-hu-vgit}. The quotient $\overline{\mathcal M}(\vec t~)$ is constant as $\vec t$ moves in the interior of a face or chamber. It is possible to find these walls explicitly by means of \lemmaref{lemma:finiteness-lemma} (see \cite[Theorem 1.1]{gallardo2016variations}) for given $(n,d,k)$, since all walls of dimension $k-1$ should be a subset of the finite set of equations
\begin{align}
\label{eq:walls}
\left\{\left. \langle x^I,\lambda\rangle+\sum_{j=1}^kt_j\langle x_{i_j}, \lambda\rangle=0\ \right\vert \ x^I \text{ is a monomial of degree }d,\ 0\leqslant i_j\leqslant n+1,\ \lambda\in S_{n,d} \right\}.
\end{align}

Another interesting feature is that the the $\vec t$-stability of tuples $(X,H_1,\ldots, H_k)$ is equivalent of the $t$-stability of reducible GIT hypersurfaces of higher degree. Indeed:
\begin{lemma}\label{lemma:tuple}
Let $\vec t=(t_1,\ldots, t_k)=(\frac{s_1}{s_1'},\cdots, \frac{s_k}{s_k'}) \in \mathbb Q_{\geqslant 0}$ where $\gcd(s_i,s_i')=1$ for all $i=1,\ldots, k$. Let $I=\{i_1,\ldots, i_l\}$, $I'=\{i_1',\ldots, i_{k-l}'\}$ such that $I\sqcup I'=\{1,\ldots,k\}$ and let $s_0=\lcm(s_{i_1}',\cdots s_{i_l}')$. Let $\vec {t'}=(s_0t_{i_1'},\cdots, s_0t_{i_{k-l}'})$. A tuple $(X,H_1,\ldots,H_k)$ is $\vec t$-(semi)stable if and only if the tuple 
$$\left(\left(X^{s_0}+{\frac{s_0s_{i_1}}{s_{i_1}'}}H_{i_1}+\cdots +{\frac{s_0s_{i_l}}{s_{i_l}'}}H_{i_l}\right),H_{i_1'}, \cdots, H_{i_{k-l}'}\right)$$
is $\vec {t'}$-(semi)stable.

In particular, if $t_1,\ldots, t_k$ are natural numbers, $(X,H_1,\ldots,H_k)$ is $\vec t$-(semi)stable if and only if $X+{t_1}H_1+\cdots +{t_k}H_{k}$ (semi)stable in the classical GIT sense.
\end{lemma}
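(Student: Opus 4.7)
The plan is to reinterpret the ``formal sum'' appearing in the statement as an honest hypersurface defined by a product of the given polynomials raised to integer exponents, and then to compare the two Hilbert--Mumford functions directly using the additivity of $\mu$ under products of polynomials.

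First I would write $t_{i_j} = s_{i_j}/s_{i_j}'$ and observe that, by the very definition $s_0 = \lcm(s_{i_1}', \ldots, s_{i_l}')$, the rational numbers $s_0 t_{i_j} = s_0 s_{i_j}/s_{i_j}'$ are nonnegative integers. Hence, if $(f, l_1, \ldots, l_k)$ is any tuple of homogeneous polynomials defining $(X, H_1, \ldots, H_k)$, then
$$F \coloneqq f^{s_0} \cdot \prod_{j=1}^{l} l_{i_j}^{\,s_0 t_{i_j}}$$
is a bona fide homogeneous polynomial of degree $s_0 d + \sum_{j=1}^{l} s_0 t_{i_j}$, and this is the polynomial defining the hypersurface appearing in the lemma (the formal combination $X^{s_0} + \sum \frac{s_0 s_{i_j}}{s_{i_j}'} H_{i_j}$ records the corresponding effective Weil divisor).

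Next I would invoke the functoriality of the Hilbert--Mumford index, which gives $\mu(gh, \lambda) = \mu(g, \lambda) + \mu(h, \lambda)$ for homogeneous $g$, $h$ and any one-parameter subgroup $\lambda$. Applying this to $F$ yields
$$\mu(F, \lambda) \;=\; s_0\, \mu(f, \lambda) \;+\; \sum_{j=1}^{l} s_0 t_{i_j}\,\mu(l_{i_j}, \lambda).$$
Substituting into the definition of $\mu_{\vec{t'}}$ for the new tuple $(F, l_{i_1'}, \ldots, l_{i_{k-l}'})$ and reorganising the sums gives the single identity
$$\mu_{\vec{t'}}\bigl((F, l_{i_1'}, \ldots, l_{i_{k-l}'}), \lambda\bigr) \;=\; s_0\,\mu_{\vec{t}}\bigl((f, l_1, \ldots, l_k), \lambda\bigr).$$
Since $s_0$ is a positive integer, the two sides have the same sign for every normalized one-parameter subgroup $\lambda$, and thus the Hilbert--Mumford numerical criterion (in the form already invoked in the proof of \lemmaref{lemma:finiteness-lemma}) immediately yields the equivalence of $\vec{t}$-(semi)stability of $(X, H_1, \ldots, H_k)$ with $\vec{t'}$-(semi)stability of the new tuple.

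For the final ``in particular'' assertion I would specialise by taking $I = \{1, \ldots, k\}$ and $I' = \emptyset$: all denominators $s_i'$ equal $1$, so $s_0 = 1$, the vector $\vec{t'}$ is empty, and the reformulated object reduces to the single hypersurface $X + t_1 H_1 + \cdots + t_k H_k$, whose $G$-stability is the classical GIT notion. The proof is essentially bookkeeping; the only substantive point requiring care is verifying that the exponents $s_0 t_{i_j}$ are nonnegative integers, which is precisely what the choice $s_0 = \lcm(s_{i_1}', \ldots, s_{i_l}')$ guarantees. I do not anticipate any serious obstacle beyond keeping the indices straight.
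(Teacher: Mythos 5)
Your proposal is correct and follows essentially the same route as the paper: both reduce the claim to the single identity $\mu_{\vec{t'}}\bigl((f^{s_0}\prod_j l_{i_j}^{s_0t_{i_j}}, l_{i_1'},\ldots,l_{i_{k-l}'}),\lambda\bigr) = s_0\,\mu_{\vec t}\bigl((f,l_1,\ldots,l_k),\lambda\bigr)$, justified by the multiplicativity of the Hilbert--Mumford index on products of homogeneous polynomials, and then conclude via the numerical criterion since $s_0>0$. The only difference is cosmetic: the paper isolates the observation $\mu(g^m,\lambda)=m\,\mu(g,\lambda)$ as a preliminary step, whereas you invoke additivity of $\mu$ under products directly.
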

\begin{proof}
Let $\lambda$ be a normalized one-parameter subgroup, $m$ be a positive integer and $g=\sum g_Ix^I$ be a homogeneous polynomial. Let $J$ be such that $\langle x^J,\lambda\rangle = \min\{\langle x^I,\lambda\rangle \ | \ x^I\subset \mathrm{Supp}(g)\}$. Then, since $\lambda$ is normalized, $m\langle x^J,\lambda\rangle = \min\{\langle x^I,\lambda\rangle \ | \ x^I\subset \mathrm{Supp}(g^m)\}$.

Let $(f,l_1,\ldots,l_k)$ be the equations of $(X,H_1,\ldots, H_k)$ under some system of coordinates and let $\lambda$ be a normalized one-parameter subgroup. Using the above observation, the lemma follows from:
\begin{align*}
s_0\mu_{\vec t}~((f,l_1,\ldots, l_k), \lambda)&=s_0\left(\mu(f,\lambda)+\sum_{j=1}^l\frac{s_{i_j}}{s_{i_j}'}\mu(l_{i_j},\lambda)+ \sum_{j=1}^{k-l}\frac{s_{i_j'}}{s_{i_j'}'}\mu(l_{i_j'}, \lambda)\right)\\
&=\mu\left(\left(f^{s_0}\cdot l_{i_1}^{\frac{s_0s_{i_1}}{s_{i_1}'}}\cdot \cdots \cdot l_{i_l}^{\frac{s_0s_{i_l}}{s_{i_l}'}}\right), \lambda\right) + s_0\sum_{j=1}^{k-l}\frac{s_{i_j'}}{s_{i_j'}'}\mu(l_{i_j'}, \lambda)\\
&=\mu_{\vec {t'}}~\left(\left(f^{s_0}\cdot l_{i_1}^{\frac{s_0s_{i_1}}{s_{i_1}'}}\cdot \cdots \cdot l_{i_l}^{\frac{s_0s_{i_l}}{s_{i_l}'}}\right),l_{i_1'}, \cdots, l_{i_{k-l}'}\right)
\end{align*}
\end{proof}

\begin{corollary}\label{cor:forget}
Let $\vec{t} =(t_1,\ldots, t_j, 0,\ldots, 0)$ and $\vec{t'}=(t_1,\ldots, t_j)$, $j\leqslant k$. Then a tuple $(X,H_1,\ldots, H_k)$ 
is $\vec t$-semistable if and only if $(X,H_1, \ldots, H_j)$ is $\vec{t'}$-semistable.
\end{corollary}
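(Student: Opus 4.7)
The plan is to deduce this corollary directly from the linearity of the Hilbert-Mumford function in the parameters. By definition,
$$\mu_{\vec t}((f,l_1,\ldots,l_k),\lambda) = \mu(f,\lambda) + \sum_{i=1}^k t_i \,\mu(l_i,\lambda),$$
so substituting $t_{j+1}=\cdots=t_k=0$ simply kills the last $k-j$ summands and yields the pointwise identity
$$\mu_{\vec t}((f,l_1,\ldots,l_k),\lambda) = \mu_{\vec{t'}}((f,l_1,\ldots,l_j),\lambda)$$
for every one-parameter subgroup $\lambda$, independently of the forms $l_{j+1},\ldots,l_k$.

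I would then invoke \lemmaref{lemma:finiteness-lemma} on both sides. By that lemma, $(X,H_1,\ldots,H_k)$ is $\vec t$-semistable if and only if $\mu_{\vec t}((g\cdot f,g\cdot l_1,\ldots,g\cdot l_k),\lambda) \leqslant 0$ for every $g\in G$ and every $\lambda\in S_{n,d}$, and analogously for $(X,H_1,\ldots,H_j)$ and $\vec{t'}$. Crucially, the fundamental set $S_{n,d}$ depends only on $n$ and $d$ and not on the number of hyperplanes, so both tests run over the same finite collection of one-parameter subgroups. Combining this with the Hilbert-Mumford identity above, the two systems of inequalities coincide, giving the equivalence.

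There is no real obstacle here: the content is purely notational and reflects the expected functorial behavior of GIT when a polarization weight is set to zero, namely that the corresponding hyperplane drops out of the stability calculation entirely. Even without the reduction to $S_{n,d}$, the same argument works verbatim with the usual Hilbert-Mumford criterion on each maximal torus, since the equality of Hilbert-Mumford functions holds before any normalization.
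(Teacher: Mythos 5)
Your proof is correct and is essentially the argument the paper has in mind: the paper states this as an immediate consequence of \lemmaref{lemma:tuple} (specialize to $I=\{j+1,\ldots,k\}$ with $s_{i}=0$, $s_0=1$, so the zero-weight hyperplanes drop out), and the proof of that lemma is exactly the linearity of $\mu_{\vec t}$ in the weights that you verify directly. Your observation that the reduction to $S_{n,d}$ is not even needed, since the identity of Hilbert--Mumford functions holds for every one-parameter subgroup in every coordinate system, is accurate.
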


\begin{lemma}[{cf. \cite[Corollary 1.2]{gallardo2016moduli}}]
\label{corollary:dimension-moduli}
If the locus of stable points is not empty, and $d \geqslant 3$, then
$$\dim \overline{\mathcal M}({\vec{t}}~)_{n,d,k}={\binom{n+d+1}{d}}-n^2+(k-4)n+k-4.$$
\end{lemma}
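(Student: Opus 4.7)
The plan is to reduce the statement to a pure dimension count, using that stable points in the sense of GIT automatically have finite stabilizer and closed orbit. Since the hypothesis gives that the stable locus $\mathcal R^{s}_{\vec t}$ is non-empty, I will exploit the fact that $\mathcal R_{n,d,k} \cong \mathbb P^N \times (\mathbb P^{n+1})^k$ is irreducible, so that any non-empty open subset (in particular $\mathcal R^{s}_{\vec t}$) is dense and has the same dimension as $\mathcal R_{n,d,k}$.

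First, I would record the numerology:
$$\dim \mathcal R_{n,d,k} = N + k(n+1) = \binom{n+d+1}{d} - 1 + k(n+1),\qquad \dim G = \dim \mathrm{SL}_{n+2} = (n+2)^2 - 1.$$
Next, I would invoke the standard fact from Mumford's GIT that stable points have closed $G$-orbit and finite stabilizer; combining this with density of $\mathcal R^{s}_{\vec t}$ in $\mathcal R_{n,d,k}$, the geometric quotient $\mathcal R^{s}_{\vec t}/G$ is an open subvariety of $\overline{\mathcal M}(\vec t\,)_{n,d,k}$ of dimension
$$\dim \mathcal R^{s}_{\vec t} - \dim G = \binom{n+d+1}{d} - 1 + k(n+1) - \bigl((n+2)^2 - 1\bigr).$$
Since $\mathcal R^{s}_{\vec t}/G$ is open and dense in the projective variety $\overline{\mathcal M}(\vec t\,)_{n,d,k}$ (the semistable locus is open in $\mathcal R_{n,d,k}$ and surjects onto the GIT quotient), this common value equals $\dim \overline{\mathcal M}(\vec t\,)_{n,d,k}$.

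The final step is algebraic simplification: expanding $(n+2)^2 - 1 = n^2 + 4n + 3$ and regrouping,
$$\binom{n+d+1}{d} - 1 + kn + k - n^2 - 4n - 3 = \binom{n+d+1}{d} - n^2 + (k-4)n + (k-4),$$
which is the claimed formula. The hypothesis $d \geqslant 3$ enters only to ensure that the dimension count via a free-quotient argument is meaningful; for $d = 1$ or $d = 2$, a generic hypersurface has positive-dimensional stabilizer in $\mathrm{PGL}_{n+2}$, so the stable locus is empty and the hypothesis of the lemma would be vacuous. The only conceptual point needing care is confirming that $\overline{\mathcal M}(\vec t\,)_{n,d,k}$ genuinely has dimension equal to $\dim \mathcal R^{s}_{\vec t}/G$ rather than something larger; this follows because the categorical quotient map $\mathcal R^{ss}_{\vec t} \to \overline{\mathcal M}(\vec t\,)_{n,d,k}$ is surjective and $\mathcal R^{s}_{\vec t}/G$ sits inside it as a dense open. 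With these observations, the proof reduces to the displayed calculation.
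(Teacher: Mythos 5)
Your proof is correct, and the skeleton (compute $\dim\mathcal R-\dim G$ and simplify) is the same as the paper's, but you justify the key step differently. The paper establishes that the generic stabilizer is zero-dimensional by citing Orlik--Solomon's theorem that a smooth hypersurface of degree $d\geqslant 3$ has finite automorphism group, applied to a tuple with $X$ smooth and the $X\cap H_i$ simple normal crossings; it then quotes the dimension formula $\dim(\mathcal R/\!\!/G)=\dim\mathcal R-\dim G+\min_p\dim G_p$ from Dolgachev's lectures. You instead observe that the hypothesis already hands you a non-empty stable locus, and that properly stable points have finite stabilizers by definition (equivalently, by the strict Hilbert--Mumford inequality used to define $\vec t$-stability in this paper), so density of $\mathcal R^{s}_{\vec t}$ in the irreducible $\mathcal R$ and openness of $\mathcal R^{s}_{\vec t}/G$ in $\overline{\mathcal M}(\vec t\,)$ give the count directly. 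Your route is more economical and actually shows the hypothesis $d\geqslant 3$ is redundant once non-emptiness of the stable locus is assumed, whereas the paper's route buys the extra geometric information of exhibiting concretely which tuples have trivial stabilizer. One caveat: your closing remark that for $d=1,2$ the stable locus is automatically empty is not justified --- a low-degree hypersurface together with enough hyperplanes in general position can still have finite stabilizer, so stable tuples may well exist there --- but this aside does not affect the validity of your argument for the lemma as stated.
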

\begin{proof}
From \cite[Theorem 2.1]{orlik-solomon-finite-automorphism-hypersurface}, any hypersurface $X=\{f=0\}$ where $f$ is a homogeneous polynomial of degree $d\geqslant 3$ has $\dim(\mathrm{Aut}(f))=0$. Hence, for any tuple $p=(X,H_1,\dots, H_K)$ such that $X$ is smooth and $X\cap H_i$ has simple normal crossings, its stabilizer $G_p$ satisfies
$$0\leqslant \dim(G_p)=\dim(G_X\cap G_{H_1}\cap \cdots \cap G_{H_k})\leqslant \dim( G_X)\leqslant \dim( \mathrm{Aut}(f))=0,$$
where the last equality follows from \cite[Theorem 2.1]{orlik-solomon-finite-automorphism-hypersurface}.
The result follows from \cite[Corollary 6.2]{dolgachev-lectures-git}:
\begin{align*}
\dim\left(\overline{\mathcal M}(\vec t~)_{n,d,t}\right)&=\dim(\mathcal R)-\dim(G)+\min_{p\in \mathcal R}{\dim G_p}=\\
&=\left({\binom{n+1+d}{d}}-1 + k(n+1)\right)-\left((n+2)^2-1\right).
\end{align*}
\end{proof}

Now let us consider the case of the symmetric polarization of $\mathcal R_{n,d,k}$. In order to do so, observe that the group $S_k$ acts on $\mathcal R_{n,d,k}$ by defining the action of $h\in S_k$ as
$$h:(f,l_1,\ldots,l_k)\mapsto (f,l_{h(1)},\ldots, l_{h(k)}).$$
Define $\mathcal R'_{n,d,k}\coloneqq\mathcal R_{n,d,k}/S_k$, which parametrizes classes of tuples $[(f,l_1,\ldots, l_k)]$ up to multiplication by a scalar \emph{and}  permutation of $(l_1, \cdots, l_k)$, i.e. $[(f,l_1,\ldots, l_k)]=[(a_0f,a_1l_{g(1)},\cdots, a_kl_{g(k)})]$ for $g\in S_k$ and $(a_0,\ldots, a_k)\in (\mathbb C^*)^{k+1}$. Hence, we parameterize the same elements as in $\mathcal R_{n,d,k}$ but we \emph{forget} the ordering of the linear forms. In particular $\mathcal R'_{n,d,k}$ parametrizes pairs $[(X,H_1,\ldots, H_k)]$ formed by a hypersurface $X\subset\mathbb P^{n+1}$ of degree $d$ and $k$ unordered hyperplanes. The quotient morphism $\pi\colon \mathcal R_{n,d,k}\rightarrow \mathcal R'_{n,d,k}$ is $G$-equivariant. Let $\mathcal L_*=\mathcal O (a,b_1,\ldots,b_k)\in \mathrm{Pic}(\mathcal R)$ such that $\left(\frac{b_1}{a},\ldots,\frac{b_k}{a}\right)=(1,\ldots,1)$ (i.e. we are considering $\vec t$-stability with respect to $\vec t=\vec t_*\coloneqq(1,\ldots, 1)$). If the condition
\begin{equation}
\vec t_*=(1,\ldots, 1)\in \Stab(n,d,k).
\label{eq:symmetric-polarization}
\end{equation}
holds, then a tuple $(f,l_1,\ldots, l_k)$ is $t_1$-(semi)stable if and only if $(\pi(f),\pi(l_1),\ldots,\pi(l_k))$ is stable with respect to $\overline{\mathcal L_*}\coloneqq(\pi_*(\mathcal L_*))^{\vee \vee}$ by \cite[Theorem 1.1 and p. 48]{Mumford-GIT}. Hence, it is natural to define the GIT quotient
\begin{equation}
\overline{\mathcal M}^*_{n,d,k}=\Proj\bigoplus_{m\geqslant 0} H^0(\mathcal R'_{n,d,k},\overline{\mathcal L_*}^{\otimes m}))^{G},
\label{eq:GIT_unlabeled}
\end{equation}
which is the \emph{GIT quotient of unordered tuples $(X,H_1,\cdots,  H_k)$ with respect to the polarization $\overline{\mathcal L_*}$}. We have a commutative diagram:
$$
\xymatrix{
\mathcal R_{n,d,k}^{ss}\ar@{->}[d]\ar@{->}[rr]^{\pi}&&\mathcal R_{n,d,k}^{'ss}\ar@{->}[d]\\%
\overline{\mathcal M}(\vec t_*~)_{n,d,k}\ar@{->}[rr]^{\pi_*}&& \overline{\mathcal M}^*_{n,d,k}.}
$$
We want to determine all the orbits represented in $\overline{\mathcal M}^*_{n,d,k}$ from the orbits represented in $\overline{\mathcal M}^*_{n,d+l,k-l}$ via $\overline{\mathcal M}(\vec t_*~)_{n,d,k}$.

Choose $1\leqslant j_1< \cdots< j_l\leqslant k$ and define the $G$-equivariant morphism $\phi_{j_1,\ldots,j_l}\colon \mathcal R_{n,d,k}\rightarrow \mathcal R_{(n,d+l,k-l)}$ given by
$$(f,l_1,\ldots, l_k)\mapsto (f\cdot l_{j_1}\cdots l_{j_l}, l_1,\ldots, \hat{l_{j_i}},\ldots, l_{k}).$$
By \lemmaref{lemma:tuple}, we have a commutative diagram:
$$
\xymatrix{
\mathcal R_{n,d+l,k-l}^{ss}\ar@{->}[d]\ar@{<-}[rrr]^{\phi_{j_1,\ldots,j_l}}&&&\mathcal R_{n,d,k}^{ss}\ar@{->}[d]\ar@{->}[rr]^{\pi}&&\mathcal R_{n,d,k}^{'ss}\ar@{->}[d]\\%
\overline{\mathcal M}(\vec t_*~)_{n,d+l,k-l}\ar@{<-}[rrr]^{{\overline\phi_{j_1,\ldots,j_l}}}&&& \overline{\mathcal M}(\vec t_*)_{n,d,k}\ar@{->}[rr]^{\pi_*}&& {\overline{\mathcal M}}^*_{n,d,k}.}
$$

\begin{proposition}
\label{proposition:big-diagram}
Let $1\leqslant j_1< \cdots <j_l\leqslant k$ and suppose that \eqref{eq:symmetric-polarization} holds. An unordered tuple $[(X,H_1,\cdots,H_k)]$ ---where $X$ is a hypersurface of degree $d$ in $\mathbb P^{n+1}$ and $H_1,\ldots, H_k$ are $k$ unordered hyperplanes--- is (semi)stable with respect to $\overline{\mathcal L_*}$ if and only if 
$$
(X+H_{j_1}+\cdots +H_{j_l}, H_1,\ldots, \hat{H}_{j_i},\ldots, H_k)
$$ 
---a pair represented by a tuple in $\mathcal R_{n,d+l,k-l}$--- is $\vec{t_*}$-(semi)stable. Moreover an orbit $O'\in \mathcal R_{n,d,k}^{'ss}$ is closed if and only if and only if $\phi_{j_1,\ldots,j_l}(\pi^{-1}(O'))$ is closed. In addition, an orbit $O \in \mathcal R^{ss}_{n,d,k}$ is closed if and only if $\phi_{j_1,\ldots, j_l}(O)$ is closed.
\end{proposition}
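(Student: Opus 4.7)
The statement splits into three assertions: a (semi)stability equivalence between the labelled and unlabelled GIT, and two closed-orbit statements. My plan is to derive the first from \lemmaref{lemma:tuple} combined with the standard descent of GIT along a finite quotient, and to deduce the two closed-orbit statements from the finiteness and $S_l$-symmetry of the fibres of $\phi=\phi_{j_1,\ldots,j_l}$ together with the elementary fact that a $G$-orbit strictly inside the closure of another has strictly smaller dimension.

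For the (semi)stability part, I would apply \lemmaref{lemma:tuple} with $\vec t=\vec t_*=(1,\ldots,1)$. Since every $s_i'=1$ forces $s_0=1$, the lemma directly identifies $\vec t_*$-(semi)stability of $(X,H_1,\ldots,H_k)$ with $\vec t_*$-(semi)stability of its image $(X+H_{j_1}+\cdots+H_{j_l},H_1,\ldots,\widehat{H}_{j_i},\ldots,H_k)\in \mathcal R_{n,d+l,k-l}$ under $\phi_{j_1,\ldots,j_l}$. On the other hand, because $\pi\colon \mathcal R_{n,d,k}\to \mathcal R'_{n,d,k}$ is the quotient by the finite group $S_k$ acting through $G$-equivariant automorphisms and $\overline{\mathcal L_*}=(\pi_*\mathcal L_*)^{\vee\vee}$ is the descended linearisation (as already invoked in~(\ref{eq:GIT_unlabeled})), the standard GIT descent along a finite quotient identifies $\overline{\mathcal L_*}$-(semi)stability of the unlabelled class $[(X,H_1,\ldots,H_k)]$ with $\vec t_*$-(semi)stability of any of its lifts. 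Composing these two equivalences yields the first assertion.

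For the closed-orbit statement on $\mathcal R^{ss}_{n,d,k}$, the key observation is that $\phi$ is $G$-equivariant with finite fibres: $\phi(f,l_1,\ldots,l_k)=\phi(f',l_1',\ldots,l_k')$ forces $l_i=l_i'$ for $i\notin\{j_1,\ldots,j_l\}$ and the remaining tuples to correspond to the same factorisation of the common product, i.e.\ to differ by a permutation $\sigma\in S_l$ of the positions $j_1,\ldots,j_l$; this $S_l$-action commutes with $G$. If $G\cdot x$ is closed and $G\cdot y$ is any semistable orbit in $\overline{G\cdot\phi(x)}$, the Hilbert-Mumford specialisation criterion provides a one-parameter subgroup $\lambda$ with $\lim_{t\to 0}\lambda(t)\phi(x)\in G\cdot y$. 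Lifting to the projective variety $\mathcal R_{n,d,k}$, the limit $x''=\lim_{t\to 0}\lambda(t)x$ exists and satisfies $\phi(x'')\in G\cdot y\subset \mathcal R^{ss}_{n,d+l,k-l}$, hence $x''\in \mathcal R^{ss}_{n,d,k}$ by the first part. Closedness of $G\cdot x$ then forces $G\cdot x''=G\cdot x$, and consequently $G\cdot \phi(x)=G\cdot y$, showing $G\cdot\phi(x)$ is closed. Conversely, if $G\cdot\phi(x)$ is closed but $G\cdot x$ is not, then $\overline{G\cdot x}\setminus G\cdot x$ contains a boundary orbit $G\cdot x''$ of strictly smaller dimension with $\phi(x'')\in \overline{G\cdot\phi(x)}=G\cdot\phi(x)$, so $\phi(x'')=\phi(gx)$ for some $g\in G$; the fibre description gives $x''=g(\sigma x)$ for some $\sigma\in S_l$, i.e.\ $G\cdot x''=G\cdot \sigma x$. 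Since $\sigma$ induces an automorphism of $\mathcal R_{n,d,k}$ commuting with $G$, $\dim G\cdot \sigma x=\dim G\cdot x$, contradicting $\dim G\cdot x''<\dim G\cdot x$.

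For the unlabelled closed-orbit statement, I would observe that $\pi^{-1}(O')$ is a finite union of $G$-orbits that are $S_k$-translates of one another and hence equidimensional; the same orbit-dimension argument shows that such a finite equidimensional union of $G$-orbits is closed in $\mathcal R^{ss}_{n,d,k}$ if and only if each of its constituent orbits is closed. The analogous statement holds for $\phi(\pi^{-1}(O'))=\bigcup_\sigma G\cdot \phi(\sigma x)$, whose constituent orbits are equidimensional because $\phi$ has finite fibres. Combining this with the orbit equivalence just proved yields the equivalence $O'$ closed $\Leftrightarrow$ $\phi(\pi^{-1}(O'))$ closed. The main obstacle is the converse direction of the closed-orbit equivalence on $\mathcal R^{ss}_{n,d,k}$, where the interplay between the $S_l$-symmetry of $\phi$-fibres and the dimension behaviour of orbits inside closures is essential; everything else is either a direct appeal to \lemmaref{lemma:tuple} or a bookkeeping argument about finite unions of equidimensional orbits.
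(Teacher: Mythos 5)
Your treatment of the (semi)stability equivalence is the same as the paper's: descend (semi)stability along the finite quotient $\pi$ via \cite[Theorem 1.1 and p.~48]{Mumford-GIT} and then apply \lemmaref{lemma:tuple} with $\vec t=\vec t_*$. That part is fine. For the closed-orbit assertions the paper itself only remarks that $\phi_{j_1,\ldots,j_l}$ is finite, hence closed; your argument is more detailed, and its overall strategy (finite fibres plus the fact that boundary orbits in an orbit closure have strictly smaller dimension) is sound. However, the step you yourself single out as the crux contains a false claim: the fibres of $\phi=\phi_{j_1,\ldots,j_l}$ are \emph{not} in general single $S_l$-orbits. The fibre of $\phi$ over $(g,m_1,\ldots,m_{k-l})$ consists of \emph{all} factorisations $g=f'\cdot l'_{j_1}\cdots l'_{j_l}$ with the $l'_{j_i}$ linear, and whenever the degree-$d$ factor itself contains linear forms there are fibre points not obtained by permuting the positions $j_1,\ldots,j_l$. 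Concretely, for $n=1$, $d=4$, $k=2$, $l=1$, $j_1=2$, the points $(x_0q,\,l_1,\,x_1)$ and $(x_1q,\,l_1,\,x_0)$ (with $q$ a cubic) have the same image $(x_0x_1q,\,l_1)$ but are not related by $S_1=\{e\}$, and such degenerate configurations are exactly the ones that arise as limits $x''=\lim_{t\to0}\lambda(t)x$. So the deduction ``$\phi(x'')=\phi(gx)$ implies $x''=g(\sigma x)$ for some $\sigma\in S_l$'' fails, and with it your derivation of $\dim G\cdot x''=\dim G\cdot x$.

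The gap is repairable without the fibre description: since $\phi$ is $G$-equivariant with finite fibres, the surjection $\phi|_{G\cdot y}\colon G\cdot y\to G\cdot\phi(y)$ has finite fibres for every semistable $y$, whence $\dim G\cdot y=\dim G\cdot\phi(y)$. Applied to $x$ and to $x''$ (using $\phi(x'')\in G\cdot\phi(x)$, so $G\cdot\phi(x'')=G\cdot\phi(x)$), this gives $\dim G\cdot x''=\dim G\cdot\phi(x)=\dim G\cdot x$, which is the contradiction you wanted. With that substitution the converse direction goes through, and the same dimension count also justifies the equidimensionality you invoke for the constituent orbits of $\phi(\pi^{-1}(O'))$. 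One further small imprecision: the Hilbert--Mumford/Kempf specialisation statement produces a one-parameter subgroup reaching only the \emph{unique closed} orbit in $\overline{G\cdot\phi(x)}\cap\mathcal R^{ss}_{n,d+l,k-l}$, not an arbitrary semistable orbit in the closure; you should take $G\cdot y$ to be that closed orbit, which is all your argument needs.
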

\begin{proof}
Since \eqref{eq:symmetric-polarization} holds, all the spaces in the above diagram are non-empty. As $\pi$ is finite, the pair $[(X,H_1,\cdots,H_k)]$ ---represented by 
the classes of tuples in $\mathcal R'_{n,d,k}$--- is (semi)stable with 
respect to $\overline{\mathcal L_*}$ if and only if every $(X,H_1,
\cdots,H_k)$ in the class $[(X,H_1,\cdots,H_k)]$ is $\vec t_*$-(semi)stable, by \cite[Theorem 1.1 
and p. 48]{Mumford-GIT}. By \lemmaref{lemma:tuple}, $(X,H_1,
\cdots,H_k)$ ---represented by tuples in $\mathcal R_{n,d,k}$--- is $
\vec t_*$-(semi)stable if and only if $(X+H_{j_1}+\cdots +H_{j_l}, 
H_1,\ldots, \hat{H}_{j_i},\ldots, H_k)$ ---represented by tuples in $
\mathcal R'_{n,d+l,k-l}$--- is $\vec t_{*}$-(semi)stable (note that we 
use the notation $\vec t_*$ for vectors with all entries equal $1$, 
whether $\vec t_*$ has $k$ or $k-l$ entries). The last statement 
regarding closed orbits follows from noting that finite morphisms are 
closed, and hence $\phi_{j_1,\ldots, j_l}$ is closed. 
\end{proof}

\subsection{Symmetric GIT quotient of a quartic curve and two lines}
We have seen how to construct GIT quotients $\overline{\mathcal M}(t_1,t_2)\coloneqq \overline{\mathcal M}(t_1,t_2)_{1,d,2}$ for $(t_1,t_2)\in \mathrm{Stab}(t_1,t_2)$. In this section we apply our results to the case of quartic plane curves ($d=4$), but let us first show that our setting satisfies condition \eqref{eq:symmetric-polarization} for arbitrary degree. Hence, for the rest of the article, we assume that $n=1$ and $k=2$.
\begin{lemma}\label{intVGIT}
The space of GIT stability conditions is
 \begin{align} 
\mathrm{Stab}(1,d,2)
:=
\{ (t_1,t_2) \in \mathbb{R}^2 \; | \; 
2t_1-t_2-d \leqslant 0, \; 2t_2-t_1-d \leqslant 0, \; 0 \leqslant t_1, \; 
0 \leqslant t_2
 \}.\label{eq:stab-plane-curves}
 \end{align}
In particular, \eqref{eq:symmetric-polarization} holds.  
\end{lemma}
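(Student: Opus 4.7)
The non-negativity constraints $t_1, t_2 \geq 0$ are built into the definition of $\vec{t}$-semistability, so the content of the lemma is the pair of linear inequalities $2t_i - t_j \leq d$ for $\{i,j\} = \{1,2\}$. I would prove these by establishing necessity via a universal destabilizing one-parameter subgroup, and sufficiency by exhibiting an explicit semistable tuple and invoking \lemmaref{lemma:tuple}.

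For necessity of $2t_1 - t_2 \leq d$, I would suppose $2t_1 - t_2 > d$ and show that \emph{every} tuple $(f, l_1, l_2)$ is $\vec t$-unstable. Since $\mathrm{SL}_3$ acts transitively on lines in $\mathbb P^2$, after a $G$-translate I may assume $l_1 = x_0$, and I would then test against the normalized one-parameter subgroup $\lambda_0 = \mathrm{diag}(s^2, s^{-1}, s^{-1})$. The key computations are $\mu(l_1, \lambda_0) = 2$, $\mu(l_2, \lambda_0) \geq -1$ (with value $2$ in the degenerate case $l_2 \propto l_1$), and $\mu(f, \lambda_0) \geq -d$ since the lowest weight of a degree-$d$ monomial under $\lambda_0$ is $-d$. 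These combine to
\[
\mu_{\vec t}\bigl((f, l_1, l_2), \lambda_0\bigr) \;\geq\; \begin{cases} -d + 2 t_1 - t_2, & l_2 \not\propto l_1, \\ -d + 2 t_1 + 2 t_2, & l_2 \propto l_1, \end{cases}
\]
both strictly positive under the hypothesis $2t_1 - t_2 > d$ and $t_2 \geq 0$ (in the degenerate case, write $-d + 2t_1 + 2t_2 = (-d + 2t_1 - t_2) + 3t_2 > 0$). The symmetric bound $2t_2 - t_1 \leq d$ follows by swapping the roles of $l_1$ and $l_2$.

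For sufficiency, I would fix rational $\vec t = (p/q, r/s)$ inside the polytope and consider a generic triple $(C, L_1, L_2)$: $C$ smooth of degree $d$, $L_1, L_2$ transverse to $C$ and to each other, with $L_1 \cap L_2 \notin C$. By \lemmaref{lemma:tuple}, this triple is $\vec t$-semistable iff the reducible plane curve $\Gamma = qs \cdot C + ps \cdot L_1 + qr \cdot L_2$ of degree $D = dqs + ps + qr$ is classically GIT semistable. I would then apply Mumford's criterion to $\Gamma$: its most restrictive inequalities arise from one-parameter subgroups of type $(2,-1,-1)$ aligned with each $L_i$, forcing each $L_i$ to appear in $\Gamma$ with multiplicity at most $D/3$. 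These rewrite exactly as $ps \leq D/3 \iff 2t_1 - t_2 \leq d$ and $qr \leq D/3 \iff 2t_2 - t_1 \leq d$, i.e.\ the polytope conditions. The remaining Mumford inequalities bound multiplicities of $\Gamma$ at the points $C \cap L_i$ and $L_1 \cap L_2$ by $2D/3$; each is a linear inequality in $(t_1, t_2)$ whose maximum over the polytope can be checked at its four vertices $(0,0), (d/2, 0), (0, d/2), (d, d)$, with genericity ensuring no unexpected multiplicities arise. This shows the polytope is contained in $\mathrm{Stab}(1, d, 2)$, and equality follows with necessity; the containment $(1,1) \in \mathrm{Stab}(1, d, 2)$ for $d \geq 2$ is then immediate, proving \eqref{eq:symmetric-polarization}.

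The main obstacle is the verification, inside the sufficiency step, that \emph{all} remaining Mumford inequalities for $\Gamma$ are implied by the two polytope bounds; this is a finite but tedious linear-programming check over the extremal one-parameter subgroups in $S_{1, D}$, and one must also verify that no further destabilizing configuration is introduced by the reducibility of $\Gamma$. An alternative route that bypasses \lemmaref{lemma:tuple} is a direct case analysis of $\mu_{\vec t}$ on the $G$-orbit of $(f, x_0, x_1)$ for $f$ generic, split by the possible supports of $g \cdot x_0$ and $g \cdot x_1$ (each being one of $\{x_0\}, \{x_0, x_1\}, \{x_0, x_2\}, \{x_0, x_1, x_2\}$); in each case one writes $\mu_{\vec t}$ as a linear functional of normalized weights $(r_0, r_1, r_2)$ and checks nonpositivity on the polytope using $r_0 \geq r_1 \geq r_2$ and $r_0 + r_1 + r_2 = 0$.
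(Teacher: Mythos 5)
Your necessity argument is correct and is essentially the paper's: normalize so that $L_1=\{x_0=0\}$, test against $\lambda=\mathrm{Diag}(s^2,s^{-1},s^{-1})$, and use $\mu(f,\lambda)\geqslant -d$, $\mu(l_2,\lambda)\geqslant -1$ to get $0\geqslant -d+2t_1-t_2$ (your case split on $l_2\propto l_1$ is harmless but unnecessary, since $t_2\mu(l_2,\lambda)\geqslant -t_2$ covers both cases).

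The sufficiency half, however, has a genuine gap that you yourself flag as ``the main obstacle'': you never actually verify that the generic triple (equivalently, the reducible curve $\Gamma=qs\cdot C+ps\cdot L_1+qr\cdot L_2$) is semistable on the polytope. The assertions that the ``most restrictive'' inequalities are the line-multiplicity bounds $ps,qr\leqslant D/3$ and that all remaining Hilbert--Mumford inequalities are implied by the two polytope constraints are exactly the content that needs proving, and deferring them to ``a finite but tedious linear-programming check'' leaves the inclusion of the polytope into $\mathrm{Stab}(1,d,2)$ unestablished. The paper sidesteps this entirely by using the convexity of $\mathrm{Stab}(1,d,2)$ itself (\cite[0.2.1]{dolgachev-hu-vgit}): since $\mathrm{Stab}$ is convex, it suffices to exhibit \emph{some} semistable tuple at each of the four vertices, and crucially these witnesses may be different at different vertices. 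At $(\tfrac{d}{2},0)$ and $(0,\tfrac{d}{2})$ one quotes \corollaryref{cor:forget} together with the known interval $[0,\tfrac{d}{2}]$ for one hyperplane, and at $(d,d)$ one takes the very degenerate triple $(\{x_0^d=0\},\{x_1=0\},\{x_2=0\})$, for which \lemmaref{lemma:tuple} reduces the question to the monomial curve $x_0^dx_1^dx_2^d$ and the centroid criterion gives semistability in one line. Your insistence on a single generic witness forces you to verify semistability of a complicated curve at the hardest vertex $(d,d)$, which is at least as much work as the computation you are trying to avoid; note also that your reduction to vertices for a \emph{fixed} tuple needs the observation that the $\vec t$-semistable locus of a fixed tuple is an intersection of half-planes (hence convex), which you use implicitly but should state. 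I would either complete the vertex checks for your generic triple or, better, switch to the convexity-of-$\mathrm{Stab}$ argument with degenerate witnesses.
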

\begin{proof}
Let $\vec t=(t_1,t_2)$ be a vector and $(C,L_1,L_2)$ be a $\vec t$-semistable tuple. By a choosing an appropriate change of coordinates, we may assume
\begin{align*}
C\coloneqq\{p(x_0,x_1,x_2)=0\},& &L_1 \coloneqq\{x_0=0\}, & &L_2\coloneqq\{l_2=0\}.
\end{align*}
Let $\lambda=\mathrm{Diag}(s^2,s^{-1}, s^{-1})$. Then, as $t_1\geqslant 0, t_2\geqslant 0$, we have
\begin{align*}
0\geqslant \mu_{(t_1,t_2)}((p, x_0,l_2), \lambda) =\mu(p,\lambda)+2t_1+t_2\mu(l_2,\lambda)\geqslant\mu(x_2^d,\lambda)+2t_1+t_2\mu(x_2,\lambda)=-d+2t_1-t_2.
\end{align*}
Similarly, by taking a change of coordinates such that $L_1=\{l_1=0\},\ L_2=\{x_0=0.\}$, we may show that $0\geqslant -d-t_1+2t_2$.

Recall that the space of GIT stability conditions is convex \cite[0.2.1]{dolgachev-hu-vgit}. Hence it is enough to show that all the vertices of the right hand side in \eqref{eq:stab-plane-curves} have a semistable tuple $(C, L_1,L_2)$ (and hence, they belong to $\mathrm{Stab}(1,d,2)$). These vertices correspond to the points $(0,0)$, $(\frac{d}{2}, 0)$, $(0, \frac{d}{2})$ and $(d,d)$. By \corollaryref{cor:forget}, a tuple $(C, L_1,L_2)$ is $(\frac{d}{2},0)$-semistable if and only if $(C, L_1)$ is $\frac{d}{2}$-semistable, but the space of GIT $t$-stability conditions for plane curves and one hyperplane is $[0,\frac{d}{2}]$ \cite[Theorem 1.1]{gallardo2016variations}. A mirrored argument applies for the stability point $(0,\frac{d}{2})$.

Hence, we only need to exhibit a tuple $(C, L_1,L_2)$ which is $(d,d)$-semistable. Let $(C, L_1, L_2)=(\{x_0^d=0\}, \{x_1=0\}, \{x_2=0\})$. By \lemmaref{lemma:tuple}, such a pair is $t$-semistable if and only if the reducible curve $C+dL_1+dL_2$ (defined by the equation $x_0^dx_1^dx_2^d=0$) of degree $3d$ is semistable in the usual GIT sense. The latter follows from the centroid criterion \cite[Lemma 1.5]{gallardo2016variations}.
\end{proof}

There are two natural problems regarding the subdivision of $\mathrm{Stab}(n,d,k)$ into chambers and walls. One of them is to determine the walls and the solution is usually rather heavy computationally and geometrically speaking (see \cite{gallardo2016variations,gallardo2016moduli} for the case $(n,d,k)=(2,3,1)$ and for a partial answer when $k=1$ and $(n,d)$ are arbitrary). Given a tuple $(X, H_1,\cdots, H_k)$ the second problem consists on determining for which chambers and walls this tuple is (semi)stable. This problem may be easier to solve, especially when the answer to the first problem is known. The problem is simpler when $k=1$, as then $\mathrm{Stab}(n,d,1)$ is one-dimensional has a natural order. Nevertheless, we can give a partial answer when $n=1, k=2$ and $d$ is arbitrary.
\begin{definition}
Let $\operatorname{Stab}(C,L_1,L_2) \subset \mathrm{Stab}(1,d,2)$ be the loci such that 
$(t_1,t_2) \in \operatorname{Stab}(C,L_1,L_2) $ if and only $(C,L_1,L_2)$ is 
$t$-semistable.
\end{definition}

\begin{lemma}\label{lemma:2gen}
Suppose that $C$ is a plane curve of degree $d$ whose only singular point $p \in C$ is a linearly semi-quasihomogeneous singularity \cite[Def. 2.21]{laza_n16} with respect to the weights $\vec w=(w_1,w_2)$, $w_1 \geqslant w_2>0$. Suppose further that $C+L_1+L_2$ have simple normal crossings in $\mathbb C\setminus\{p\}$. Let $f$ be the localization of the equation of $f$ at $p$ and $\vec w(f)$ be its weighted degree with respect to $\vec w$.

\begin{enumerate}
\item
Suppose that $p\not\in L_1\cup L_2$. Then
$$
\operatorname{Stab}(C,L_1,L_2)
\subseteq
\bigg\{
(t_1, t_2) \in \mathrm{Stab}(1,d,2) 
\; \bigg| \;
t_1+t_2-\frac{3\vec w(f)}{w_1+w_2}+d \geqslant 0
\bigg\}.
$$
\item
Suppose that $p\not\in L_2$  and $p\in L_1\cap C$. Then
$$
\operatorname{Stab}(C,L_1,L_2)
\subseteq
\bigg\{
(t_1,t_2) \in \mathrm{Stab}(1,d,2) 
\; \bigg | \; 
t_2-
t_1\frac{2w_2-w_1}{w_1+w_2}
-\frac{3\vec w(f_C)}{w_1+w_2}+d
\geqslant 0
\bigg\}.
$$
\end{enumerate}
\end{lemma}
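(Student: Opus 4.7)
The plan is to apply the Hilbert-Mumford numerical criterion via a normalized one-parameter subgroup adapted to the singularity at $p$. First, using the $SL_3$-action I would move $p$ to the point $[0:0:1]$. Since the singularity is \emph{linearly} semi-quasihomogeneous with weights $(w_1,w_2)$, after a further linear change of local coordinates $(u,v) = (x_0/x_2, x_1/x_2)$ at $p$ one can arrange that the initial form $f(u,v)$ is weighted homogeneous of degree $\vec w(f)$ with $u$ of weight $w_1$ and $v$ of weight $w_2$.

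Next, I consider the one-parameter subgroup $\lambda = \mathrm{Diag}(s^{r_0}, s^{r_1}, s^{r_2})$ with $r_0 = \frac{2w_1-w_2}{3}$, $r_1 = \frac{2w_2-w_1}{3}$, $r_2 = -\frac{w_1+w_2}{3}$. A direct check gives $r_0+r_1+r_2=0$ and $r_0 \geqslant r_1 \geqslant r_2$ since $w_1 \geqslant w_2 > 0$, so (after clearing denominators) $\lambda$ is normalized. The crucial identities $r_0-r_2 = w_1$ and $r_1-r_2 = w_2$ imply that the projective monomial $x_0^i x_1^j x_2^{d-i-j}$---which in the chart $x_2 \neq 0$ corresponds to $u^i v^j$---has $\lambda$-weight equal to $-\frac{d(w_1+w_2)}{3} + iw_1 + jw_2$. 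By semi-quasihomogeneity, every monomial appearing in the localization of $F$ at $p$ satisfies $iw_1+jw_2 \geqslant \vec w(f)$, so
\begin{equation*}
\mu(F,\lambda) \geqslant \vec w(f) - \frac{d(w_1+w_2)}{3}.
\end{equation*}

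For the lines, write $l_i = \alpha_{i,0} x_0 + \alpha_{i,1} x_1 + \alpha_{i,2} x_2$, so $p \in L_i$ if and only if $\alpha_{i,2}=0$. Since $r_2$ is the smallest of the three weights, $\alpha_{i,2} \neq 0$ yields $\mu(l_i,\lambda) = r_2$, while $\alpha_{i,2} = 0$ confines the support of $l_i$ to $\{x_0,x_1\}$ and hence gives $\mu(l_i,\lambda) \geqslant r_1$. In case (1), both lines miss $p$ and $\mu(l_i,\lambda) = r_2 = -\frac{w_1+w_2}{3}$ for $i=1,2$; in case (2) one has $\mu(l_1,\lambda) \geqslant r_1 = \frac{2w_2-w_1}{3}$ and $\mu(l_2,\lambda) = r_2$. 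Substituting these bounds into the semistability inequality $\mu_{\vec t}((C,L_1,L_2),\lambda) \leqslant 0$ and clearing a factor of $\frac{1}{3}$ yields, after elementary rearrangement, exactly the two inequalities stated in the lemma; in case (2) the coefficient $\frac{2w_2-w_1}{w_1+w_2}$ appears as $r_1/(-r_2)$.

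The main subtlety is arranging the weight convention so that $\lambda$ is genuinely normalized; the \emph{linearity} of the semi-quasihomogeneous hypothesis is precisely what makes this possible, since it allows us to choose local coordinates at $p$ in which $w_1$ falls on the coordinate corresponding to the largest $\lambda$-differential $r_0-r_2$, while the remaining permutation freedom is absorbed into the $SL_3$-action placing $p$ at $[0:0:1]$. Beyond this coordinate bookkeeping the argument is a direct application of Hilbert-Mumford, with the second inequality being saturated exactly when $\alpha_{1,1} \neq 0$.
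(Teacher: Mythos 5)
Your argument is correct and is essentially the paper's own proof: both place $p$ at $[0:0:1]$ and destabilize with the one-parameter subgroup with weights proportional to $(2w_1-w_2,\,2w_2-w_1,\,-w_1-w_2)$, compute $\mu(C,\lambda)$ via the identity $iw_1+jw_2 \geqslant \vec w(f)$ on the support of the localization, and bound $\mu(l_i,\lambda)$ according to whether $p\in L_i$. Your version is only a rescaling by $\tfrac13$ of the paper's subgroup, with somewhat more explicit bookkeeping of the coordinate normalization; no substantive difference.
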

\begin{proof}
We may choose a coordinate system such that $[0:0:1]$ is the singular point of $C$.  We consider the one parameter subgroup 
$\lambda = (2w_1-w_2, 2w_2-w_1,-w_1-w_2)$
which is normalized, as $w_1 \geqslant w_2$.   

The first statement is equivalent to show that 
if  $(t_1, t_2) \in \mathrm{Stab}(1,d,2) $ and
$
t_1+t_2-\frac{3\vec w(f)}{w_1+w_2}+d < 0,
$
then the triple is $(t_1,t_2)$-unstable.

Let  $l_1(x_0,x_1)+x_2$ and $l_2(x_0,x_1)+x_2$ be the equations of the lines $L_1$ and $L_2$, respectively, where $l_1,l_2$ are linear forms. We have that
\begin{align*}
\mu_{t_1,t_2}((C,L_1, L_2 ), \lambda)
&=
\mu(C, \lambda) +t_1\mu(L_1, \lambda)+t_2\mu(L_2, \lambda)
\\
&=
\min\{ 
3w_1i_0+3w_2i_1 \: | \; x_0^{i_0}x_1^{i_1}
\in Supp(f)
\}
+
(d+t_1+t_2)(-w_1-w_2)
\\
&=
3 \vec w(f)-(w_1+w_2)(d+t_1+t_2)
=- (w_1+w_2) \left( 
t_1+t_2 - \frac{3\vec w(f_C)}{w_1+w_2} + d  \right).
\end{align*}
Therefore, $\mu_{t_1,t_2}(C,L_1, L_2, \lambda)  >0$ and the triple is destabilized by $\lambda$.

For the second statement, the lines $L_1$ and $L_2$ have equation $l_1(x_0,x_1)$ and $l_2(x_0,x_1) +x_2$, respectively, where $l_1,l_2$ are linear forms. If $t_2-t_1\frac{2w_2-w_1}{w_1+w_2}-\frac{3\vec w(f_C)}{w_1+w_2}+d$, as $w_1\geqslant w_2$, we have
\begin{align*}
\mu_{t_1,t_2}((C,L_1, L_2), \lambda)
&=
\mu(C, \lambda) +t_i\mu(L_i, \lambda)+t_k\mu(L_k, \lambda)
\\
&\geqslant
3 \vec w(f_C)
+
d(-w_1-w_2) +t_1(2w_2-w_1)+t_2(-w_1-w_2)
\\
&=
-(w_1+w_2)
\left(
t_2-
t_1\frac{2w_2-w_1}{w_1+w_2}
-\frac{3w(f_C)}{w_1+w_2}+d
\right)>0.
\end{align*}
\end{proof}

For the rest of the paper we consider tuples $(C, L_1, L_2)$ formed by a plane quartic $C$ and two lines $L_1, L_2\subset \mathbb P^2$. 
The following result will come useful:
\begin{lemma}[{Shah \cite[Section 2]{shah1980complete}, cf. \cite[Theorem 1.3]{laza2012ksba}}]
\label{lemma:sextic}
Let $Z$ be a plane sextic, and $X$ the double cover of 
$\mathbb{P}^2$ branched along $Z$. Then $X$ has semi-log canonical singularities if and only if $Z$ is semistable and the closure of the orbit of $Z$ does not contain the orbit of the triple conic. In particular, a sextic plane curve with simple singularities is stable.  
\end{lemma}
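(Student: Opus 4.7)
The plan is to reduce the statement to the comparison between the log canonical threshold of $(\mathbb{P}^2,\tfrac{1}{2}Z)$ and the classification of GIT-semistable plane sextics, both of which are classical. This reduction is the standard bridge between singularities of double covers and the GIT stability of their branch divisors.

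First, I would recall that since $\pi\colon X \to \mathbb{P}^2$ is a double cover branched along the sextic $Z$, the Hurwitz formula gives $K_X = \pi^*(K_{\mathbb{P}^2} + \tfrac{1}{2}Z)$, and by a standard crepant argument $X$ has semi-log canonical singularities if and only if the log pair $(\mathbb{P}^2,\tfrac{1}{2}Z)$ is log canonical, i.e.\ $\mathrm{lct}_p(\mathbb{P}^2,Z) \geqslant \tfrac12$ at every point $p \in Z$. Locally at each singularity of $Z$, this is a condition on multiplicity and tangent-cone structure: in particular $\mathrm{mult}_p Z \leqslant 3$, and if the multiplicity is $3$ then the tangent cone must satisfy mild transversality conditions (no triple line, etc.).

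Next, I would apply Shah's analysis of the $\mathrm{SL}_3$-action on $\mathbb{P}(H^0(\mathbb{P}^2,\mathcal{O}(6)))$ via the Hilbert--Mumford numerical criterion, which furnishes a classification of which local singularity types may appear on a semistable sextic. The conclusion is: semistability of $Z$ forbids singularities of multiplicity $\geqslant 4$, prohibits triple points with non-reduced tangent cone, and more generally excludes precisely those singularities whose weighted multiplicity against some normalized one-parameter subgroup exceeds the critical bound. The minimal closed semistable orbits can then be enumerated, and the distinguished minimal orbit is the triple conic $\{q^3=0\}$, which is semistable but for which the double cover is non-reduced. The main comparison, then, is the case-by-case verification that Shah's list of singularities allowed on semistable $Z$ coincides with the list for which $\mathrm{lct}_p \geqslant \tfrac12$, with the single exception of the triple conic, which fails the lct condition but lies in the closure of every strictly semistable orbit. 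This is the main technical obstacle: it is a finite but nontrivial case analysis matching each borderline singularity type against the corresponding one-parameter subgroup and against the lct computation.

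Finally, for the ``in particular'' claim, I would check that a sextic $Z$ with only simple (ADE) singularities satisfies $\mathrm{lct}_p(\mathbb{P}^2,Z) > \tfrac12$ strictly at every singular point (indeed $X$ acquires only Du Val, hence canonical, singularities). These singularities lie strictly below the borderline ones that produce strictly semistable sextics in Shah's list, so the Hilbert--Mumford function is strictly negative on every normalized one-parameter subgroup at every point of the orbit, giving GIT-stability of $Z$.
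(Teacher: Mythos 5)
The paper does not actually prove this lemma: it is imported verbatim from Shah \cite{shah1980complete} (in the modern slc formulation of \cite{laza2012ksba}), so there is no internal argument to compare yours against. Your outline does reproduce the standard architecture of the cited proof --- the crepant reduction $K_X=\pi^*(K_{\mathbb P^2}+\tfrac12 Z)$ identifying semi-log canonicity of $X$ with log canonicity of $(\mathbb P^2,\tfrac12 Z)$, followed by Shah's Hilbert--Mumford analysis and a matching of the two singularity lists. But as written it is a plan rather than a proof: the entire content of Shah's theorem is the finite case analysis you defer as ``the main technical obstacle,'' both for the equivalence and for the ``in particular'' clause. For the latter, note that $\mathrm{lct}_p(\mathbb P^2,Z)>\tfrac12$ at every point does not by itself yield strict negativity of the Hilbert--Mumford function: the log canonical threshold and the $\mu$-function are different invariants, and the bridge between them is precisely the case-by-case classification of the non-stable locus. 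You would also need to treat non-reduced $Z$ separately (there $X$ is non-normal, the Hurwitz reduction must be supplemented by the codimension-one double-crossing condition, and $(\mathbb P^2,\tfrac12 Z)$ can only be log canonical if every component of $Z$ has multiplicity at most $2$); the double conic is the relevant borderline case.

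There is moreover one assertion that is simply false and would break the ``if and only if'': you claim the triple conic ``lies in the closure of every strictly semistable orbit.'' If that were so, the lemma would collapse to ``$X$ is slc if and only if $Z$ is stable,'' which is wrong: by \lemmaref{lemma:ADE} there exist strictly semistable reduced sextics (those whose double covers have simple elliptic or cuspidal singularities) which are slc, hence whose orbit closures must avoid the triple conic. The correct picture is that the triple conic is the unique closed orbit in the fiber of the GIT quotient over a single boundary point, and the semistable sextics excluded by the lemma are exactly those lying in that one fiber (Shah's ``significant limit'' degenerations), a proper closed subset of the strictly semistable locus.
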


\begin{lemma}\label{lemma:ADE}
Let $\vec t =(1,1)$ and $(C, L_1, L_2)$ be a tuple such that the sextic $C+L_1+L_2$ is reduced. Then, $(C, L_1, L_2)$ is $\vec t$-(semi)stable if and only if the double cover $X$ of $\mathbb P^2$ branched at $C+L_1+L_2$ has at worst  simple singularities (respectively  simple elliptic or cuspidal singularities).
\end{lemma}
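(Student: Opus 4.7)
The plan is to reduce the statement to Shah's classification of semistable plane sextics. Since $\vec t = (1,1)$ has natural number entries, \lemmaref{lemma:tuple} applies and gives that $(C,L_1,L_2)$ is $(1,1)$-(semi)stable if and only if the degree-six plane curve $Z := C + L_1 + L_2$ is (semi)stable in the classical GIT sense. This reduces the problem to a statement about plane sextics and their double covers.

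Next, I would invoke \lemmaref{lemma:sextic}. Because $Z$ is reduced by hypothesis, its $\mathrm{SL}_3$-orbit closure cannot contain the (everywhere non-reduced) triple conic orbit: every irreducible component of a limit of $Z$ still appears with multiplicity at most one, while the triple conic is supported on a single conic with multiplicity three. This is an easy check on one-parameter degenerations and can also be read off from Shah's list of minimal strictly semistable strata. Hence \lemmaref{lemma:sextic} yields that $Z$ is GIT-semistable if and only if the double cover $\bar X \to \mathbb{P}^2$ branched along $Z$ has at worst semi-log canonical singularities. For a reduced branch divisor $\bar X$ is Gorenstein with $K_{\bar X} = \pi^*(K_{\mathbb{P}^2} + \tfrac12 Z)$ vanishing locally at each singular point, so its slc singularities are precisely the Gorenstein log canonical surface singularities. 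By the classical analytic classification these are the Kleinian (simple, ADE) singularities, the simple elliptic singularities $\widetilde E_6, \widetilde E_7, \widetilde E_8$, and the cusp singularities. This proves the semistable equivalence.

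Finally, to separate stable from strictly semistable I would use Shah's refined stratification. The ``in particular'' clause of \lemmaref{lemma:sextic} already supplies one direction: if $\bar X$ has only simple (ADE) singularities then $Z$ is GIT-stable, hence $(C,L_1,L_2)$ is $(1,1)$-stable. For the converse one checks that whenever $\bar X$ acquires a simple elliptic or cusp singularity above a point $p \in Z$, the corresponding singularity of $Z$ at $p$ (an ordinary triple point, tacnode of order three, etc.) lies on one of Shah's strictly semistable strata: the destabilising one-parameter subgroup is precisely the weighted homogeneous rescaling adapted to the local model at $p$, and the limit orbit has a positive-dimensional stabilizer. The main obstacle is this last step: matching each non-Kleinian Gorenstein lc singularity with the appropriate stratum of Shah requires a case-by-case inspection of his tables. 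Everything else is a direct reduction via \lemmaref{lemma:tuple} and \lemmaref{lemma:sextic}.
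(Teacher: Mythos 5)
Your route is the same as the paper's: reduce to classical GIT of the sextic $Z=C+L_1+L_2$ via \lemmaref{lemma:tuple}, apply Shah's criterion (\lemmaref{lemma:sextic}) for the semistable/slc equivalence, identify the slc singularities of a double plane with reduced branch divisor as simple, simple elliptic or cusp singularities (the paper does this via normality of $X\subset\bP(1,1,1,3)$ and the table of log canonical hypersurface singularities, you via the classification of Gorenstein lc surface singularities --- same content), and treat stability separately. The one genuinely wrong step is your justification that the orbit closure of $Z$ avoids the triple conic: reducedness is \emph{not} preserved under specialization, e.g.\ the reduced conic $\{x_0x_1=0\}$ has the double line $\{x_0^2=0\}$ in its $\SL_3$-orbit closure, so ``every irreducible component of a limit of $Z$ still appears with multiplicity at most one'' is false. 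The correct (and equally short) argument is that any one-parameter limit of $f\cdot l_1\cdot l_2$ is again a product $f_0\cdot m_1\cdot m_2$ with $m_1,m_2$ linear, hence contains a line in its support, whereas the closed triple conic orbit is supported on a smooth conic; since the minimal orbit in the closure is reached by a one-parameter subgroup, it cannot be the triple conic.

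On the stable case, the forward direction (simple singularities $\Rightarrow$ stable) you take from \lemmaref{lemma:sextic}, as does the paper. For the converse you propose a case-by-case match of the non-Kleinian lc singularities against Shah's strictly semistable strata via adapted weighted-homogeneous one-parameter subgroups; this is a legitimate route but you leave it unexecuted, and note that the quasi-homogeneous rescaling argument is clean for simple elliptic points but not for cusp singularities, which are not quasi-homogeneous. The paper avoids this by citing Laza \cite{laza2012ksba}: a semistable sextic either has simple singularities or its minimal orbit contains a double or triple conic in its support, and the latter is excluded here because every limit of $Z$ is of the form $f_0\cdot m_1\cdot m_2$. Either fix the triple-conic argument and cite Shah/Laza for the stratification, or carry out the case analysis you sketch; as written the proof has one false assertion and one deferred step.
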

\begin{proof}
The sextic $Z\coloneqq C+L_1+L_2=\{f\cdot l_1\cdot l_2=0\}$ (where $f$ is a quartic curve and $l_1, l_2$ are distinct linear forms not in the support of $f$) cannot degenerate to a triple conic and it is reduced by hypothesis. By \lemmaref{lemma:sextic}, $Z$ is a GIT-semistable sextic curve if and only if $X$ has semi-log canonical (slc) singularities. The surface $X$ is normal, as $Z$ is reduced \cite[Proposition 0.1.1]{cossec-dolgachev-enriques1}. In particular $X\coloneqq\{w^2=f\cdot l_1\cdot l_2\}\subset \mathbb P(1,1,1,3)$ has hypersurface log canonical singularities away from the singular point $(0:0:0:1)\not\in X$, and by the classification of such singularities in \cite[Table 1]{liu2012two}, they can only consist of either simple, simple elliptic or cuspidal singularities. If $Z$ has only simple singularities then $Z$ is GIT-stable by \lemmaref{lemma:sextic}. Now suppose $Z$ is GIT-stable and reduced. By \cite[Theorem 1.3 and Remark 1.4]{laza2012ksba} a GIT-semistable plane sextic curve has either simple singularities or it is in the open orbit of a sextic containing a double conic or a triple conic in its support, contradicting the fact that $Z$ is reduced. Hence $Z$ has only simple singularities. The proof follows from \lemmaref{lemma:tuple}.
\end{proof}
\begin{remark}
Although, we will not discuss other polarizations. It is worth to notice that for $\vec t=( \epsilon, \epsilon )$ the stability is very similar to the one of plane quartics. In particular, if $C$ is a semistable quartic and $L_1$, $L_2$ are lines in general position. Then, the triple $(C, L_1, L_2)$ is stable. 
\end{remark}

Let $\mathcal R^s_{\vec t}$ and $\mathcal R^{ss}_{\vec t}$ be the set of $\vec t$-stable and $\vec t$-semistable tuples $(f,l_1,l_2)$, respectively. Let
$$\mathcal R_0:=\left \{(f,l_1,l_2)\; | \; \text{the sextic }
\left\{f\cdot l_1\cdot l_2=0\right\}\text{ is reduced and has at worst simple singularities} 
\right\}.$$
Let $\vec t=(1,1)$. By \lemmaref{lemma:sextic}, $\mathcal R_0\subseteq\mathcal R^s_{\vec t}\subseteq\mathcal R^{ss}_{\vec t}$. Let $\mathcal M_0\coloneqq\mathcal R_0/ \! \! / \operatorname{PGL}_{3}$, $\mathcal M^s(1,1)\coloneqq\mathcal R^s_{(1,1)}/ \! \! / \operatorname{PGL}_{3}$ and recall that $\overline {\mathcal M}(1,1)=\mathcal R^{ss}_{(1,1)}/ \! \! / \operatorname{PGL}_{3}$. Then $\mathcal M_0\subseteq\mathcal M^s(1,1)\subset\overline{\mathcal M}(1,1)$. We are interested in describing the compactification of $\mathcal M_0$ by $\overline{\mathcal M}(1,1)$. We use the notation in \cite{laza2012ksba}.
\begin{lemma}\label{lemma:Bd11}
The quotient $\overline {\mathcal M}(1,1)$ is the compactification of $\mathcal M_0$ by three points and six rational curves. The three points correspond to the closed orbit of tuples $(C,L_1,L_2)$ defined up to projective equivalence by the following tuples:
\begin{align*}
	[III(1)]&: 	& C&=\{(x_0x_2-x_1^2)^2=0\},	& L_1&=\{x_0=0\},	& L_2&=\{x_2=0\};\\
	[III(2a)]&:	& C&=\{x_1^2x_2^2=0\},			& L_1&=\{x_0=0\}, & L_2&=\{x_0=0\};\\
	[III(2b)]&:	& C&=\{x_0x_1^2x_2=0\},		& L_1&=\{x_0=0\}, & L_2&=\{x_2=0\}.
\end{align*}
The six rational curves correspond to the closed orbit of tuples $(C,L_1,L_2)$ defined up to projective equivalence by the following cases:
\begin{align*}
	[II(1)]&: 		& C&=\{(x_0x_2-x_1^2)(x_0x_2-a x_1^2)=0\},		& L_1&=\{x_0=0\},	& L_2&=\{x_2=0\};\\
	[II(2a1)]&:		& C&=\{x_1x_2(x_2-x_1)(x_2-a x_1) =0\},				& L_1&=\{x_0=0\}, & L_2&=\{x_0=0\};\\
	[II(2a2)]&:	& C&=\{x_0x_2(x_2-x_1)(x_2-a x_1)=0\},				& L_1&=\{x_0=0\}, & L_2&=\{x_1=0\};\\
	[II(2b1)]&:		& C&=\{x_0^2(x_2-x_1)(x_2-a x_1) =0\},				& L_1&=\{x_1=0\}, & L_2&=\{x_2=0\};\\
	[II(2b2)]&:	& C&=\{x_0x_2(x_2-x_1)(x_2-a x_1)=0\},				& L_1&=\{x_1=0\}, & L_2&=\{x_0=0\};\\
	[II(3)]&:			& C&=\{(x_0x_2-x_1^2)^2=0\},									& & 							& L_1&=\{x_1=0\},
	\end{align*}
	\begin{align*}
	& \qquad	\qquad	\qquad	\qquad	\qquad	\qquad	\qquad	\qquad	\qquad	\qquad	\qquad	& L_2=\{a x_0 -(a +1)x_1 +x_2=0\}.
\end{align*}
where $a \neq 0,1, \infty$.
\end{lemma}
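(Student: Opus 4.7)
The plan is to reduce the boundary analysis of $\overline{\calM}(1,1)$ to the classical GIT classification of plane sextics due to Shah \cite{shah1980complete}. By \lemmaref{lemma:tuple} applied to $\vec t=(1,1)$, a triple $(C, L_1, L_2)$ is $(1,1)$-(semi)stable if and only if the plane sextic $Z := C + L_1 + L_2$ is (semi)stable for the standard $\SL_3$-action on sextic curves. Consequently, the boundary points of $\calM_0$ inside $\overline{\calM}(1,1)$ correspond bijectively to pairs $(Z,\sigma)$ where $Z$ is a plane sextic whose $\SL_3$-orbit is closed in the strictly semistable locus of sextics and $\sigma : Z = Q + \ell_1 + \ell_2$ is a labeled decomposition into a quartic plus two ordered lines, considered modulo the $\PGL_3$-action on triples.

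Next, I invoke Shah's classification of the minimal (closed-orbit) strictly semistable plane sextics. These fall into two families: \emph{Type II}, whose associated double cover has a simple elliptic singularity, yielding one-parameter families with cross-ratio modulus $a \in \mathbb A^1 \setminus \{0,1,\infty\}$; and \emph{Type III}, whose associated double cover has a worse (non-log-canonical) singularity, yielding a discrete list. For each Shah normal form I would enumerate the labeled splittings $Z = Q + \ell_1 + \ell_2$ up to the action of $\Stab_{\PGL_3}(Z)$: the Type III sextics produce the three isolated boundary points $[III(1)],[III(2a)],[III(2b)]$, while the Type II sextics produce the six rational curves $[II(1)], [II(2a1)], [II(2a2)], [II(2b1)], [II(2b2)], [II(3)]$. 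Closedness of each listed orbit is verified by exhibiting a one-parameter subgroup of $\PGL_3$ contained in the stabilizer of its representative, so that the triple coincides with its own Hilbert--Mumford limit.

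The main obstacle is the combinatorial bookkeeping of labeled splittings: a single sextic $Z$ may admit inequivalent labeled decompositions, giving distinct $\PGL_3$-orbits of triples. For instance, the ``triangle'' sextic $x_0^2 x_1^2 x_2^2$ appears both as $[III(2a)]$, via $(Q, L_1, L_2) = (\{x_1^2 x_2^2=0\}, \{x_0=0\}, \{x_0=0\})$, and as $[III(2b)]$, via $(\{x_0 x_1^2 x_2=0\}, \{x_0=0\}, \{x_2=0\})$; these yield distinct boundary points because the stabilizer of the sextic does not relate the two splittings. Similarly, the Type II pencils (bitangent conics, four concurrent lines, and double conic plus chord) admit several inequivalent labeled splittings, accounting for the five rational curves $[II(2\cdot)]$ beyond the families $[II(1)]$ and $[II(3)]$. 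A systematic case-by-case analysis of these labelings, cross-checked against the dimension formula of \lemmaref{corollary:dimension-moduli} and against the non-reduced cases where $L_1=L_2$ or $C$ shares a component with $L_i$, then completes the classification.
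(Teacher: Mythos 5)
Your strategy is sound and genuinely different from the paper's. The paper does not pass all the way to sextics: it applies the morphism $\phi_2\colon (f,l_1,l_2)\mapsto (f\cdot l_1,\, l_2)$ of \propositionref{proposition:big-diagram} to land in the moduli of degree~$5$ pairs $\overline{\mathcal M}(1)_{1,5,1}$, and then simply reads off the closed orbits lying in the image of $\phi_2$ from Laza's classification \cite[Proposition 3.22]{laza_n16}, which already records one of the two lines. You instead multiply out both lines and work with Shah's classification of minimal strictly semistable sextics. Both reductions are legitimized by \lemmaref{lemma:tuple} and the closed-orbit statement of \propositionref{proposition:big-diagram} (with $l=k$ in your case, $l=1$ in the paper's). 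What the paper's route buys is that half of the splitting combinatorics is already done in \cite{laza_n16}; what your route costs is that you must re-derive, for each Shah normal form, all decompositions $Z=Q+\ell_1+\ell_2$ modulo $\Stab_{\PGL_3}(Z)$ --- which is exactly where the content of the lemma sits and which you leave as ``a systematic case-by-case analysis.'' That is acceptable at the level of detail the paper itself uses, and you correctly spot the key subtlety that a single sextic (e.g.\ $x_0^2x_1^2x_2^2$) can carry inequivalent splittings yielding distinct boundary strata.

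One step as written is not valid: exhibiting a one-parameter subgroup of $\PGL_3$ inside the stabilizer of a representative does \emph{not} prove that its orbit is closed in the semistable locus (non-closed orbits can have positive-dimensional stabilizers; the correct criterion requires that \emph{every} $\lambda$ with $\mu=0$ has its limit inside the orbit). This flaw is harmless in your setup only because it is redundant: closedness of the triple's orbit already follows from minimality of the corresponding sextic orbit in Shah's list together with the finiteness of the multiplication map, exactly as in the last sentence of \propositionref{proposition:big-diagram}. You should delete the stabilizer argument and cite that transfer of closedness instead. Also, for the bijection to be clean you need the (true, but worth stating) facts that every non-reduced sextic of the form $C+L_1+L_2$ is non-stable, so that $\mathcal M_0=\mathcal M^s(1,1)$ and the boundary consists precisely of the minimal strictly semistable orbits, and that the relevant orbits are closed in the full semistable locus, not merely in the strictly semistable stratum.
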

\begin{proof} 
Let $\mathcal R^{''}=\mathcal R_{1,5,1}$, parametrising tuples $(g,l_1)$ up to multiplication by scalar where $g$ is a quintic homogeneous polynomial and $l_1$ is a linear form. As we have seen in \propositionref{proposition:big-diagram}, we have a morphism $\phi_2:\mathcal R^{ss}\rightarrow \mathcal R^{''ss}$ defined by $\phi_2:(f,l_1,l_2)\mapsto(f\cdot l_1, l_2)$,and an orbit $O$ of $\mathcal R^{ss}$ is closed if and only if the orbit $\phi_2(O)$ of $\mathcal R^{''ss}$ is closed.

Hence the points which compactify $\mathcal M_0$ into $\overline{\mathcal M}(1,1)$ corresponding to closed orbits of $\mathcal R^{ss}\setminus \mathcal R_0$ are mapped via $\phi_2$ onto points in $\overline\phi_2(\overline{\mathcal M}(1,1))\setminus \overline \phi_2(\mathcal M_0)$ corresponding to closed orbits in $\phi_2(\mathcal R^{ss})\setminus \phi_2(\mathcal R_0)$. Hence we just need to identify closed orbits in $\overline{\mathcal M})(1)_{1,5,1}\cap \mathrm{Im}(\overline \phi_2)$. Our result is a straight forward identification of these orbits in the classification of $\overline{\mathcal M})(1)_{1,5,1}$ in \cite[Proposition 3.22]{laza_n16}.
\end{proof}

\begin{center}
\begin{figure}[h!]
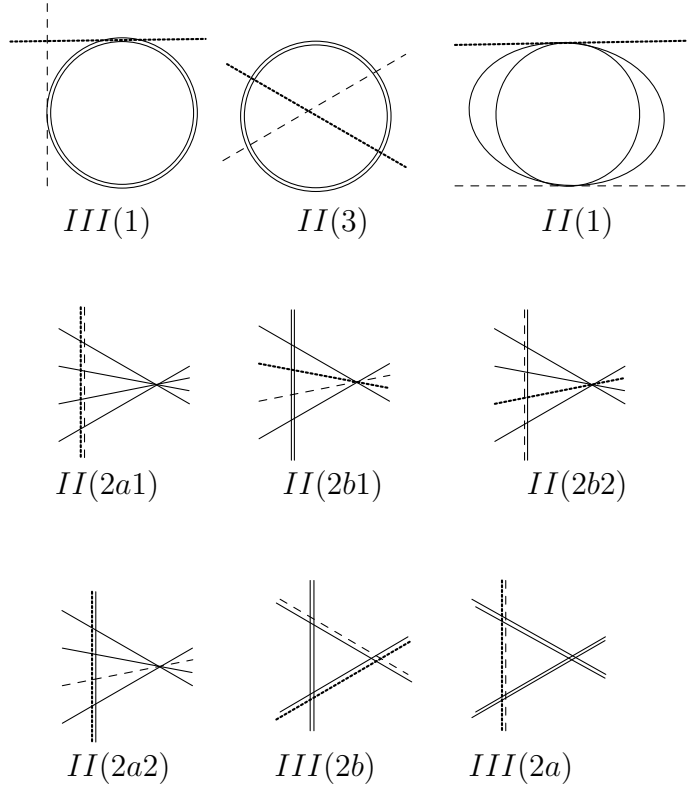



\definecolor{xdxdff}{rgb}{0,0,0}
\definecolor{qqqqff}{rgb}{0,0,0}
\tikzpicture[line cap=round,line join=round,>=triangle 45,x=1.0cm,y=1.0cm, scale=0.5]

\draw(-13.6,1.94) circle (1.9cm);
\draw(-13.6,1.94) circle (2cm);
\draw[dashed] (-15.59,0)-- (-15.59,5);
\draw[densely dotted, thick] (-16.57,3.84)-- (-11.38,3.91);
\draw (-14,-1) node {$III(1)$};

\draw(-8.45,1.85) circle (1.9cm); 
\draw(-8.45,1.85) circle (2cm); 
\draw (-8,-1) node {$II(3)$};


\draw[dashed] (-10.92,0.67)-- (-6.06,3.5);
\draw[densely dotted, thick] (-10.81,3.24)-- (-6.06,0.5);

\draw [rotate around={-6.03:(-1.78,1.91)}] (-1.78,1.91) ellipse (2.6cm and 1.89cm);
\draw [rotate around={78.82:(-1.75,1.9)}] (-1.75,1.9) ellipse (1.91cm and 1.91cm);
\draw[color=qqqqff] (-1.5,-1) node {$II(1)$};
\draw[dashed] (-4.75,0)-- (1.38,0);
\draw[densely dotted, thick]  (-4.75,3.75)-- (1.38,3.86);



\draw[dashed] (-14.6,-3.23)-- (-14.6,-7.23);
\draw[densely dotted, thick ](-14.7,-3.23)-- (-14.7,-7.23);

\draw (-10.28-5,-3.8)-- (-6.81-5,-5.8);
\draw (-10.28-5,-4.8)-- (-6.82-5,-5.45);
\draw (-10.28-5,-5.8)-- (-6.81-5,-5.11);
\draw (-10.28-5,-6.8)-- (-6.81-5,-4.8);
\draw (-14,-8) node {$II(2a1)$};

\draw (0.04-10,-3.73)-- (3.51-10,-5.73);
\draw[densely dotted, thick] (0.04-10,-4.73)-- (3.5-10,-5.38);
\draw[dashed] (0.04-10,-5.73)-- (3.52-10,-5.04);
\draw (0.04-10,-6.73)-- (3.51-10,-4.73);
\draw (0.9-10,-3.3)-- (0.9-10,-7.3);
\draw (0.98-10,-3.3)-- (0.98-10,-7.3);
\draw[color=qqqqff] (-8,-8) node {$II(2b1)$};

\draw (5.3-9,-3.8)-- (8.77-9,-5.8);
\draw (5.31-9,-4.8)-- (8.76-9,-5.45);
\draw[densely dotted, thick] (5.31-9,-5.8)-- (8.78-9,-5.11);
\draw (5.31-9,-6.8)-- (8.77-9,-4.8);
\draw (6.18-9,-3.3)-- (6.18-9,-7.3);
\draw[dashed] (6.10-9,-3.3)-- (6.10-9,-7.3);
\draw (-1.5,-8) node {$II(2b2)$};

\begin{scope}[xshift=-12cm,yshift=-14.5cm]

\draw (3.5,4)-- (3.5,0);
\draw (3.4,4)-- (3.4,0);
\draw [dashed] (2.6,3.5)-- (6.0,1.5);
\draw (2.5,3.4)-- (6.1,1.3);
\draw   (2.6,0.5)-- (6.0,2.5);
\draw[densely dotted, thick] (2.5,0.3)-- (6.1,2.4);
\draw[color=xdxdff] (3.8,-1) node {$III(2b)$};

\draw[dashed] (8.6,4)-- (8.6,0);
\draw[densely dotted, thick]  (8.5,4)-- (8.5,0);
\draw (7.8,0.5)-- (11.25,2.5);
\draw (7.7,0.35)-- (11.24,2.4);
\draw (7.7,3.5)-- (11.26,1.5);
\draw (7.8,3.3)-- (11.24,1.4);
\draw[color=xdxdff] (9,-1) node {$III(2a)$};
\end{scope}

\begin{scope}[yshift=-7.5cm,xshift=-17.5cm]
\draw (5.3-3,-3.8)-- (8.77-3,-5.8);
\draw (5.31-3,-4.8)-- (8.76-3,-5.45);
\draw[dashed] (5.31-3,-5.8)-- (8.78-3,-5.11);
\draw  (5.31-3,-6.8)-- (8.77-3,-4.8);
\draw (6.2-3,-3.3)-- (6.2-3,-7.3);
\draw[densely dotted, thick]  (6.10-3,-3.3)-- (6.10-3,-7.3);
\draw[color=qqqqff] (3.8,-8) node {$II(2a2)$};
\end{scope}

\endtikzpicture
\caption{Triples parametrized by $\overline {\mathcal M}(1,1)\setminus \mathcal M_0$. The dotted and dashed lines represent the lines $L_1$ and $L_2$, respectively (see \lemmaref{lemma:Bd11}).
}
\label{fig:Bd}
\end{figure}
\end{center}


\lemmaref{lemma:Bd11}, together with \propositionref{proposition:big-diagram} gives us the following compactification which will be of interest for the next section:
\begin{corollary} \label{cor:gitcll'}
Let  $\overline{\calM}^*\coloneqq\overline{\calM}^*_{1,4,2}$ be the GIT compactification of a quartic plane curve and two unlabeled lines and let $\mathcal M^*_0\subset\overline{\calM}^*$ be the open loci parametrizing  triples $(C,L,L')$ such that $C+L+L'$ is reduced and has at worst simple singularities. 
Then, $\overline{\calM}^* \setminus \mathcal M^*_0$  is the union of three points, $\overline{III}(1)$, $\overline{III}(2a)$, $\overline{III}(2b)$,	
and five rational curves, 
$\overline{II}(1)$, $\overline{II}(2a1)$, $\overline{II}(2a2)$, $\overline{II}(2b)$, $\overline{II}(3)$, 
which are obtained as images ---via the natural morphism $\pi_*\colon\overline{\calM}(1,1) \to \overline{\calM}^*$--- of points and rational curves described in \lemmaref{lemma:Bd11}, 
as follows:
\begin{itemize}
\item the points 
$\overline{III}(1)$, $\overline{III}(2a)$, $\overline{III}(2b)$ are the images of  the points 
$III(1)$, $III(2a)$, $III(2b)$, and 
\item 
the rational curves 
$\overline{II}(1)$, $\overline{II}(2a1)$, $\overline{II}(2a2)$, $\overline{II}(2b)$, $\overline{II}(3)$ are the images of 
the rational curves 
$II(1)$, $II(2a1)$, $II(2a2)$, $II(2b1)$, $II(3)$. 
\end{itemize}
Moreover,  the boundary components $II(2a2)$  and $II(2b2)$ in $\overline{\calM}(1,1)$ are mapped onto the same boundary component $\overline{II}(2a2)$ in $\overline{\calM}(1)$.
\end{corollary}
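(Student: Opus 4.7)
The plan is to apply Proposition~\ref{proposition:big-diagram} with $k=2$ and $l=0$, which identifies $\overline{\calM}^*$ with the quotient of $\overline{\calM}(1,1)$ by the natural $S_2$-action swapping $L_1 \leftrightarrow L_2$ and presents $\pi_*$ as the associated (finite) quotient morphism; in particular closed orbits map to closed orbits and vice versa. Since the condition defining $\mathcal M_0$ is symmetric in $L_1, L_2$, we have $\pi_*^{-1}(\mathcal M_0^*) = \mathcal M_0$, so
\[
\overline{\calM}^* \setminus \mathcal M_0^* \;=\; \pi_*\bigl(\overline{\calM}(1,1) \setminus \mathcal M_0\bigr).
\]
By Lemma~\ref{lemma:Bd11} the right-hand side is the image of the three points and six rational curves described there, so it suffices to compute these images and to determine which boundary components of $\overline{\calM}(1,1)$ get identified by $\pi_*$.

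For each representative triple $(C, L_1, L_2)$ of Lemma~\ref{lemma:Bd11}, I would exhibit an element $g \in \PGL_3$ (if it exists) carrying the swapped triple $(C, L_2, L_1)$ to another representative in the list, and track the effect on the modulus parameter $a$. For $III(2a)$ and $II(2a1)$ the two lines already coincide, so the swap is trivial. For $III(1), III(2b), II(1)$ the involution $x_0 \leftrightarrow x_2$ fixes $C$ and exchanges $L_1, L_2$, leaving the modulus invariant. For $II(3)$, where $C = (x_0x_2 - x_1^2)^2$ is a double conic, the four points $(L_1 \cup L_2)\cap C_{\mathrm{red}}$ form two unordered pairs on the smooth conic; any element of $\mathrm{PGL}_2 = \Aut(C_{\mathrm{red}})$ exchanging these two pairs extends via the Veronese to a projective automorphism of $\mathbb P^2$ fixing $C$, and this action preserves the cross-ratio modulus. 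Thus each of these six components maps $\pi_*$-injectively onto a distinct boundary stratum $\overline{III}(1), \overline{III}(2a), \overline{III}(2b), \overline{II}(1), \overline{II}(2a1)$ or $\overline{II}(3)$.

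The delicate step is the analysis of $II(2a2), II(2b1), II(2b2)$. Direct comparison of the equations shows that the representatives of $II(2a2)$ and $II(2b2)$ share the same quartic $C = x_0x_2(x_2-x_1)(x_2-ax_1)$ with the two line labels interchanged, so the $S_2$-action identifies $II(2a2)$ with $II(2b2)$ and their images under $\pi_*$ coincide in a single rational curve $\overline{II}(2a2)$. For $II(2b1)$, swapping $L_1 \leftrightarrow L_2$ followed by the involution $x_1 \leftrightarrow x_2$ sends the representative with modulus $a$ to the one with modulus $1/a$, so the quotient by $S_2$ is again a rational curve, which we call $\overline{II}(2b)$. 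Collecting everything yields the three boundary points and five boundary curves of the statement. The main obstacle is this last paragraph's bookkeeping: confirming that $II(2a2) \leftrightarrow II(2b2)$ is the unique identification of distinct components and that $a \mapsto 1/a$ on $II(2b1)$ is the only nontrivial modulus involution among the remaining self-paired strata.
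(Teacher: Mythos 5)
Your proposal is correct and takes essentially the same route as the paper, which deduces the corollary directly from \lemmaref{lemma:Bd11} together with \propositionref{proposition:big-diagram} (the finite quotient $\pi_*$ by the $S_2$-action swapping the lines). Your case-by-case verification of the identifications --- in particular that $II(2a2)$ and $II(2b2)$ are literally the same triples with labels exchanged, and that $II(2b1)$ is self-paired via the modulus involution $a\mapsto 1/a$ --- supplies the bookkeeping the paper leaves implicit.
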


\section{Moduli of quartic plane curves and two lines via $K3$ surfaces}
\label{sec:Hodge}

\subsection{On K3 surfaces and lattices}  By a \emph{lattice} we mean a finite dimensional free $\bZ$-module $L$ together with a symmetric bilinear form $(-,-)$. The basic invariants of a lattice are its rank and signature. A lattice is \emph{even} if $(x,x) \in 2\bZ$ for every $x \in L$. The direct sum $L_1\oplus L_2$ of two lattices $L_1$ and $L_2$ is always assumed to be orthogonal, which will be denoted by $L_1\perp L_2$. For a lattice $M \subset L$, $M_L^{\perp}$ denotes the orthogonal complement of $M$ in $L$. Given two lattices $L$ and $L'$ and a lattice embedding $L \hookrightarrow L'$, we call it a \emph{primitive embedding} if $L'/L$ is torsion free.  

We shall use the following lattices: the (negative definite) root lattices $A_n$ ($n \geqslant 1$), $D_m$ ($m \geqslant 4$), $E_r$ ($r = 6,7,8$) and the hyperbolic plane $U$. Given a lattice $L$, $L(n)$ denotes the lattice with the same underlying $\bZ$-module as $L$ but with the bilinear form multiplied by $n$. 

\begin{notation}
Given any even lattice $L$, we define:
\begin{itemize}
\item $L^* := \{y \in L_{\bQ} := L \otimes \bQ \mid (x,y) \in \bZ \ \text{for all} \ x \in L\}$, the dual lattice;
\item $A_L := L^*/L$, the discriminant group endowed with the induced quadratic form $q_L: A_L \rightarrow \bQ/2\bZ$;
\item $\mathrm{disc}(L)$: the determinant of the Gram matrix (i.e. the intersection matrix) with respect to an arbitrary $\bZ$-basis of $L$;
\item $O(L)$: the group of isometries of $L$;
\item $O(q_L)$: the automorphisms of $A_L$ that preserve the quadratic form $q_L$;
\item $O_{-}(L)$: the group of isometries of $L$ of spinor norm $1$ (see \cite[\S 3.6]{scattone});
\item $\widetilde{O}(L)$: the group of isometries of $L$ that induce the identity on $A_L$;
\item $O^*(L) = O_{-}(L) \cap \widetilde{O}(L)$;
\item $\Delta(L)$: the set of roots of $L$ ($\delta \in L$ is a \emph{root} if $(\delta, \delta) = -2$);
\item $W(L)$: the \emph{Weyl group}, i.e. the group of isometries generated by reflections $s_{\delta}$ in root $\delta$, where $s_{\delta}(x) = x - 2\frac{(x,\delta)}{(\delta,\delta)}\delta$.
\end{itemize}
\end{notation}

For a surface $X$, the intersection form gives a natural lattice structure on the torsion-free part of $H^2(X, \bZ)$ and on the N\'{e}ron-Severi group $\mathrm{NS}(X)$. For a $K3$ surface $S$, we have $H^1(S, \calO_S) = 0$, and hence $\Pic(S)\cong \mathrm{NS}(S)$. Both $H^2(S, \bZ)$ and $\Pic(S)$ are torsion-free and the natural map $c_1: \Pic(S) \rightarrow H^2(S, \bZ)$ is a primitive embedding. Given any K3 surface $S$, $H^2(S,\bZ)$ is isomorphic to $\Lambda_{K3}\coloneqq E_8^{2} \perp U^{3}$, the unique even unimodular lattice of signature $(3,19)$. We shall use $O(S)$, $\Delta(S)$, $W(S)$, etc. to denote the corresponding objects of the lattice $\Pic(S)$. We also denote by $\Delta^+(S)$ and $V^+(S)$ the set of effective $(-2)$ divisor classes in $\Pic(S)$ and the K\"ahler cone of $S$ respectively.   


In our context, a \emph{polarization} for a $K3$ surface is the class of a nef and big divisor $H$ (and not the most restrictive notion of ample divisor, we follow the terminology in \cite{laza_n16}) and $H^2$ is its degree. More generally there is a notion of lattice polarization. We shall consider the period map for (lattice) polarized $K3$ surfaces and use the standard facts on $K3$ surfaces: the global Torelli theorem and the surjectivity of the period map. We also need the following theorem (see \cite[p. 40]{morrison_K3} or \cite[Theorem 4.8, Proposition 4.9]{laza_n16}).

\begin{theorem} \label{mayer}
Let $H$ be a nef and big divisor on a $K3$ surface $S$. The linear system $|H|$ has base points if and only if there exists a divisor $D$ such that $H \cdot D = 1$ and $D^2=0$. 
\end{theorem}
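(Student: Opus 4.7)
The plan is to prove the two directions separately, with the sufficiency being the direct geometric construction of a base point, and the necessity relying on the structure theory of linear systems on K3 surfaces.

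For the sufficiency direction $(\Leftarrow)$, suppose $D$ satisfies $H\cdot D = 1$ and $D^2=0$. I would first apply Riemann--Roch on the K3 surface: since $\chi(\mathcal{O}_S(D)) = 2 + \tfrac{1}{2}D^2 = 2$, we have $h^0(D) + h^0(-D) \geqslant 2$. Since $H$ is nef and $H\cdot D = 1 > 0$, the divisor $-D$ cannot be effective, so $h^0(D) \geqslant 2$. I would then pass to the movable part $M$ of $|D|$: by splitting $H\cdot D = H\cdot M + H\cdot F = 1$ into non-negative summands and eliminating the case $H\cdot M = 0$ via the Hodge index theorem (using that $H^2 > 0$ would force $M$ to be numerically trivial), one arranges that the movable pencil has $H$-degree $1$ and self-intersection zero, hence defines an elliptic fibration $\pi : S \to \mathbb{P}^1$ whose fiber class we may as well call $D$.

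Then for a smooth fiber $D_0 \in |D|$, the restriction $H|_{D_0}$ is a degree-$1$ line bundle on a smooth elliptic curve; Riemann--Roch on $D_0$ gives $h^0(D_0, H|_{D_0}) = 1$, and the unique effective representative is a single point $p \in D_0$. From the short exact sequence
\begin{equation*}
0 \longrightarrow \mathcal{O}_S(H - D) \longrightarrow \mathcal{O}_S(H) \longrightarrow \mathcal{O}_{D_0}(H|_{D_0}) \longrightarrow 0,
\end{equation*}
any section of $\mathcal{O}_S(H)$ either vanishes identically along $D_0$ (in which case the whole curve $D_0$ lies in the base locus) or restricts to the unique section of $H|_{D_0}$, forcing vanishing at $p$. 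In either case $|H|$ has a base point.

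For the necessity direction $(\Rightarrow)$, suppose $|H|$ has a base point $p$. I would invoke the Saint-Donat/Mayer structure theory for complete linear systems on K3 surfaces: if $|H|$ has a fixed component, split $H = M + F$ with $F$ effective fixed and analyze $F$ via the Hodge index theorem against $H$, producing a $(-2)$-curve or an elliptic class with $H\cdot D = 1$. If the base points are isolated, I would blow up $S$ at $p$ and study the strict transform, or equivalently apply a Reider-type argument: the existence of an isolated base point on a nef and big system on a K3 forces a rank-two sublattice of $\mathrm{Pic}(S)$ whose Gram matrix, combined with the bigness of $H$ and the unimodularity conditions of $\Lambda_{K3}$, yields a class $D$ with $D^2 = 0$ and $H\cdot D = 1$.

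The main obstacle is the necessity direction: producing the divisor $D$ from a base point is the content of Saint-Donat's classification of projective models of K3 surfaces and requires nontrivial lattice-theoretic manipulations. Since the theorem is cited from Morrison and Laza in the paper, a reasonable strategy is to prove the sufficiency direction in full detail (as above) and reduce the necessity to Saint-Donat's theorem, treating the latter as an input.
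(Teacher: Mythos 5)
The paper does not actually prove this statement: it is quoted verbatim from Morrison and from Laza (\cite[p.~40]{morrison_K3}, \cite[Theorem 4.8, Proposition 4.9]{laza_n16}), so there is no internal proof to match your argument against. Your forward (sufficiency) direction is correct and essentially complete, and goes beyond what the paper records: Riemann--Roch plus nefness of $H$ gives $h^0(D)\geqslant 2$; writing $D=M+F$ and using the Hodge index theorem with the evenness of the lattice ($H^2\geqslant 2$, and $M^2\geqslant 2$ would force $(H\cdot M)^2\geqslant 4>1$) pins down $H\cdot M=1$, $M^2=0$, so $|M|$ is a free elliptic pencil whose fibers have $H$-degree $1$ (note that $H\cdot M=1$ is also what rules out multiple fibers $M=kE$, $k\geqslant 2$); and the restriction sequence to a smooth fiber then exhibits a base point. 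Two small points you should make explicit if you write this up: the Hodge index theorem is needed twice (once to kill $H\cdot M=0$, once to force $M^2=0$), and the smoothness of the generic member of the pencil is where you use that $S$ is a K3 in characteristic zero. For the converse your sketch is slightly off-target: on a K3 surface a nef and big linear system never has isolated base points (this is part of Saint-Donat's theorem), so the ``Reider-type'' branch is vacuous; the only way $|H|$ fails to be free is that $H=kE+\Gamma$ with $E$ an elliptic pencil, $\Gamma$ a $(-2)$-curve and $E\cdot\Gamma=1$, whence $D=E$ satisfies $D^2=0$ and $H\cdot D=1$. Since you explicitly propose to take Saint-Donat's classification as an input for that direction, this is a legitimate reduction and consistent with how the paper itself treats the statement (as a black box); just replace the speculative lattice-theoretic paragraph with the precise citation.
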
 

\subsection{The K3 surfaces associated to a generic triple}
We first consider the $K3$ surfaces arising as a double cover of $\mathbb P^2$ branched at a smooth quartic curve $C$ and two different lines $L_1$ and $L_2$ such that $C+L_1+L_2$ has simple normal crossings. We shall show that these $K3$ surfaces are naturally polarized by a certain lattice.

Denote by $\bar{S}_{(C,L_1,L_2)}$ the double cover of $\bP^2$ branched along $C+L_1+L_2$. Let $S_{(C,L_1,L_2)}$ be the $K3$ surface obtained as the minimal resolution of the $9$ singular points of $\bar{S}_{(C,L_1,L_2)}$. Let $\pi: S_{(C,L_1,L_2)} \rightarrow \bP^2$ be the natural morphism. Note that $\pi: S_{(C,L_1,L_2)} \rightarrow \bP^2$ also factors as the composition of the blow-up of $\mathbb P^2$ at the singularities of $C+L_1+L_2$ and the double cover of the blow-up branched along the strict transforms of $C$, $L_1$ and $L_2$ (see \cite[\S III.7]{BPV}).

Let $h = \pi^*\calO_{\bP^2}(1)$ be the pullback of the class of a line in $\bP^2$. The class $h$ is a degree $2$ polarization of $S_{(C,L_1,L_2)}$. We assume that $C \cap L_1 = \{p_1, p_2, p_3, p_4\}$, $C \cap L_2 = \{q_1, q_2, q_3, q_4\}$ and $L_1 \cap L_2 =\{r\}$. Denote the classes of the exceptional divisors corresponding to $p_i$, $q_i$, and $r$ by $\alpha_i$, $\beta_i$ and $\gamma$ respectively ($1 \leqslant i \leqslant 4$). Let us also denote by $l_1'$ (respectively $l_2'$) the class of the strict transform of $L_1$ (respectively $L_2$). Note that the morphism $\pi: S_{(C,L)} \rightarrow \bP^2$ is given by the class
\begin{equation}
h = 2l_1' + \alpha_1 + \ldots + \alpha_4 + \gamma = 2l_2' + \beta_1 + \ldots + \beta_4 + \gamma.
\label{eq:class-h}
\end{equation}
It is straightforward to check that $(\alpha_i, \alpha_j) = (\beta_i, \beta_j) = -2 \delta_{ij}$, $(\gamma, \gamma) = -2$, $(\alpha_i, \beta_j) = (\alpha_i, \gamma) = (\gamma, \beta_j) = 0$ for $1 \leqslant i,j \leqslant 4$. Clearly, we have $(l_1', l_1') = (l_2',l_2') = -2$, $(l_1', \alpha_i) = (l_2', \beta_j) = (l_1', \gamma) = (l_2', \gamma)= 1$, and $(l_1', \beta_j) = (l_2', \alpha_i) = 0$ for $1 \leqslant i,j \leqslant 4$. 

Consider the sublattice of the Picard lattice of $S_{(C,L_1,L_2)}$ generated by the curve classes $\gamma$, $l_1'$, $\alpha_1, \ldots, \alpha_4$, $l_2'$, $\beta_1, \ldots, \beta_4$. Let $$\xi := 2l_1' + \alpha_1 + \ldots + \alpha_4 = 2l_2' + \beta_1 + \ldots + \beta_4.$$ It follows from \eqref{eq:class-h} that $\{ \gamma, l_1', \alpha_1, \alpha_2, \alpha_3, l_2', \beta_1, \beta_2, \beta_3, \xi\}$ forms a $\bZ$-basis of the sublattice. The Gram matrix with respect to this basis is computed as follows.
$$
 \left( {\begin{array}{cccccccccc}
   -2 & 1 & 0 & 0 & 0 & 1 & 0 & 0 & 0 & 2  \\
   1 & -2 & 1 & 1 & 1 & 0 & 0 & 0 & 0 & 0  \\
   0 & 1 & -2 & 0 & 0 & 0 & 0 & 0 & 0 & 0  \\
   0 & 1 & 0 & -2 & 0 & 0 & 0 & 0 & 0 & 0  \\
   0 & 1 & 0 & 0 & -2 & 0 & 0 & 0 & 0 & 0  \\
   1 & 0 & 0 & 0 & 0 & -2 & 1 & 1 & 1 & 0  \\
   0 & 0 & 0 & 0 & 0 & 1 & -2 & 0 & 0 & 0  \\
   0 & 0 & 0 & 0 & 0 & 1 & 0 & -2 & 0 & 0  \\
   0 & 0 & 0 & 0 & 0 & 1 & 0 & 0 & -2 & 0  \\
   2 & 0 & 0 & 0 & 0 & 0 & 0 & 0 & 0 & 0  \\
   \end{array} } \right)_{\textstyle .}
$$

\begin{notation} \label{lattice M}
Let $M$ be the abstract lattice of rank $10$ spanned by an ordered basis
$$\{ \gamma, l_1', \alpha_1, \alpha_2, \alpha_3, l_2', \beta_1, \beta_2, \beta_3, \xi\}$$
with the intersection form given by the above Gram matrix, which we will call $G_M$. Notice that $M$ is an even lattice. If $S_{(C,L_1,L_2)}$ is a $K3$ surface obtained as above from a smooth quartic $C$ and two lines $L_1,L_2$ such that $C+L_1+L_2$ has simple normal crossings, then there is a natural lattice embedding $\jmath: M\hookrightarrow \Pic(S_{(C,L_1,L_2)})$ as described before. 

We set $h = \gamma + \xi$. Observe that $\jmath(h)$ is linearly equivalent to the pullback of a line in $\mathbb P^2$ via $\pi$ and therefore, it is a base point free polarization. In particular, we have $(h,h) = 2$, $(h,l_1') = (h, l_2') = 1$, and $(h, \alpha_i) = (h, \beta_j) = 0$ for $1 \leqslant i,j \leqslant 3$. We also let $\alpha_4 = \xi - 2l_1' - \alpha_1 - \alpha_2 - \alpha_3$ and $\beta_4 = \xi - 2l_2' - \beta_1 - \beta_2 - \beta_3$.
\end{notation}



Let us compute the discriminant group $A_M$ and the quadratic form $q_M: A_M \rightarrow \bQ/2\bZ$. 
\begin{lemma} \label{A_M}
The discriminant group $A_M=M^*/M$ is isomorphic to $(\bZ/2\bZ)^{\oplus 6}$.
\end{lemma}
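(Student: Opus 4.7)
The plan is to prove the lemma in two stages: first show $|A_M| = 64$ by computing $|\det G_M|$, and then verify $A_M$ is $2$-torsion.

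For the first stage, I would exploit the sparsity of $G_M$. The last row has only one nonzero entry, namely $(\xi,\gamma) = 2$ in column $1$, so cofactor expansion along this row reduces the determinant to a $9 \times 9$ minor. In that minor the $\xi$-column again has a single nonzero entry (again coming from $(\gamma,\xi) = 2$), and a second cofactor expansion leaves an $8 \times 8$ block-diagonal determinant whose two identical diagonal blocks are
$$A = \begin{pmatrix} -2 & 1 & 1 & 1 \\ 1 & -2 & 0 & 0 \\ 1 & 0 & -2 & 0 \\ 1 & 0 & 0 & -2 \end{pmatrix},$$
the restriction of the form to $\{l_1',\alpha_1,\alpha_2,\alpha_3\}$ and to $\{l_2',\beta_1,\beta_2,\beta_3\}$ respectively. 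A quick expansion gives $\det A = 4$, so $|\det G_M| = 2\cdot 2\cdot 4^2 = 64 = 2^6$.

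For the second stage, since $|A_M| = 2^6$ it is enough to show $2M^\ast \subseteq M$, and the most explicit route is to produce six independent order-$2$ classes in $A_M$. Natural candidates are $\xi/2$ (which lies in $M^\ast$ since $(\xi,\gamma) = 2$ and $\xi$ is orthogonal to every other basis vector, by inspection of $G_M$) and classes of the form $(\gamma + \alpha_i + \beta_j)/2$ for suitable $i,j$, which a direct pairing check against the Gram matrix shows also lie in $M^\ast$, together with analogues involving $\alpha_4 = \xi - 2l_1' - \alpha_1 - \alpha_2 - \alpha_3$ and $\beta_4 = \xi - 2l_2' - \beta_1 - \beta_2 - \beta_3$. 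I would then extract six such classes and verify independence by reducing the corresponding coefficient vectors modulo $M$ and checking that the resulting $6 \times 10$ matrix over $\mathbb{F}_2$ has rank $6$.

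The main obstacle is the bookkeeping in the second stage: one must carefully track which half-integer combinations of the basis already lie in $M$, especially because the identities for $\alpha_4$ and $\beta_4$ hide several congruences modulo $M$ (for instance $(\alpha_1+\alpha_2+\alpha_3+\alpha_4)/2 \equiv \xi/2 \pmod{M}$). A slicker but less illuminating alternative is simply to compute the Smith normal form of $G_M$ by row and column operations and read off invariant factors $(1,1,1,1,2,2,2,2,2,2)$, reducing the whole lemma to a single mechanical calculation.
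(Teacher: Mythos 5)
Your proposal is correct, and it is the same kind of argument as the paper's --- pure linear algebra on the Gram matrix --- but it runs the final step in the opposite direction. The paper computes the inverse matrix $G_M^{-1}$, reads off the dual basis, and observes that modulo $M$ the classes $l_1'^*$, $l_2'^*$ vanish while $\alpha_3^*=\gamma^*+\alpha_1^*+\alpha_2^*$ and $\beta_3^*=\gamma^*+\beta_1^*+\beta_2^*$; since every entry of $G_M^{-1}$ lies in $\frac12\bZ$, this exhibits $A_M$ as generated by six elements of order at most $2$, i.e.\ an upper bound $|A_M|\leqslant 2^6$, which must still be combined with $|\det G_M|=64$ (stated elsewhere in the paper) to conclude. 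You bound from below instead: your double cofactor expansion is valid (the $\xi$-row and then the $\xi$-column each have a single nonzero entry $2$, and $\det A=4$, giving $|\det G_M|=2\cdot 2\cdot 4^2=64$), your candidate vectors do lie in $M^*$ (e.g.\ $(\gamma+\alpha_i+\beta_j)/2$ pairs integrally with every basis vector), and six of them --- say $\xi/2$, $(\gamma+\alpha_1+\beta_j)/2$ for $j=1,2,3$, and $(\gamma+\alpha_i+\beta_1)/2$ for $i=2,3$ --- have numerators that are independent in $M/2M$, so they span a copy of $(\bZ/2\bZ)^{\oplus 6}$ inside $A_M$, which then equals $A_M$ by the order count. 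One caution: the independence check you defer is genuinely needed, since for instance $(\gamma+\alpha_2+\beta_2)/2$ is congruent mod $M$ to the sum of the three classes $(\gamma+\alpha_1+\beta_1)/2$, $(\gamma+\alpha_1+\beta_2)/2$, $(\gamma+\alpha_2+\beta_1)/2$; but with the choice above it succeeds. Both routes need the determinant; yours makes the order count explicit inside the proof, while the paper's version produces the explicit generating set $\{\gamma^*,\alpha_1^*,\alpha_2^*,\beta_1^*,\beta_2^*,\xi^*\}$ that is reused later to write down $q_M$ and classify isotropic elements, so if you adopted your route you would still want to record those generators.
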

\begin{proof}
Let us denote by $\alpha_i^*\in M^*$ (respectively $\beta_j^*$, $\gamma^*$, $\xi^*$, $l_1'^*$, $l_2'^*\in M^*$) the dual element of $\alpha_i\in M$ (respectively $\beta_j$, $\gamma$, $\xi$, $l_1'$, $l_2'\in M$, for $1 \leqslant i,j \leqslant 3$). Recall that $\alpha_i^*$ is defined to be the unique element of $M^*$ such that $(\alpha_i^*, \alpha_i) = 1$ and the pairing of $\alpha_i^*$ with any other element of the basis $\{\gamma, l_1', \alpha_1, \alpha_2, \alpha_3, l_2', \beta_1, \beta_2, \beta_3, \xi\}$ is $0$. We define $l_1'^*$, $l_2'^*$, $\beta_j^*$, $\gamma^*$ and $\xi^*$ in a similar way. The coefficients of the dual elements $\gamma^*, l_1'^*, \alpha_1^*, \alpha_2^*, \alpha_3^*, l_2'^*, \beta_1^*, \beta_2^*, \beta_3^*, \xi^*$ (with respect to the basis $\{\gamma, l_1', \alpha_1, \alpha_2, \alpha_3, l_2', \beta_1, \beta_2, \beta_3, \xi\}$) can be read from the rows or columns of the inverse matrix $G_M^{-1}$ of the Gram matrix $G_M$ of $M$:
$$
G_M^{-1}= \left( {\begin{array}{cccccccccc}
   0 & 0 & 0 & 0 & 0 & 0 & 0 & 0 & 0 & \frac{1}{2}  \\
   0 & -2 & -1 & -1 & -1 & 0 & 0 & 0 & 0 & 1  \\
   0 & -1 & -1 & -\frac{1}{2} & -\frac{1}{2} & 0 & 0 & 0 & 0 & \frac{1}{2}  \\
   0 & -1 & -\frac{1}{2} & -1 & -\frac{1}{2} & 0 & 0 & 0 & 0 & \frac{1}{2}  \\
   0 & -1 & -\frac{1}{2} & -\frac{1}{2} & -1 & 0 & 0 & 0 & 0 & \frac{1}{2}  \\
   0 & 0 & 0 & 0 & 0 & -2 & -1 & -1 & -1 & 1  \\
   0 & 0 & 0 & 0 & 0 & -1 & -1 & -\frac{1}{2} & -\frac{1}{2} & \frac{1}{2}  \\
   0 & 0 & 0 & 0 & 0 & -1 & -\frac{1}{2} & -1 & -\frac{1}{2} & \frac{1}{2}  \\
   0 & 0 & 0 & 0 & 0 & -1 & -\frac{1}{2} & -\frac{1}{2} & -1 & \frac{1}{2} \\
   \frac{1}{2} & 1 & \frac{1}{2} & \frac{1}{2} & \frac{1}{2} & 1 & \frac{1}{2} & \frac{1}{2} & \frac{1}{2} & -\frac{1}{2}  \\
   \end{array} } \right)_{\textstyle .}
$$
For instance, $l_2'^*=-2l_2-\beta_1-\beta_2-\beta_3-\xi$, where we identify each element of $M$ with its image in $M^*$. By abuse of notation, we also use $\alpha_i^*$, $\beta_j^*$, $\gamma^*$, $\xi^*$ to denote the corresponding elements in $A_M = M^*/M$. Observe that $l_1'^* = l_2'^* \equiv 0 \in A_M$. It is straightforward to verify that $A_M$ can be generated by $\{\gamma^*, \alpha_1^*, \alpha_2^*, \beta_1^*, \beta_2^*, \xi^*\}$ and hence $A_M$ is isomorphic to $(\bZ/2\bZ)^{\oplus 6}$. Indeed, this follows from observing from the columns of $G_M^{-1}$ that $\alpha^*_3=\gamma^* + \alpha_1^*+\alpha_2^*\in A_M$ and $\beta^*_3=\gamma^* + \beta_1^*+\beta_2^*\in A_M$. 
\end{proof}

\begin{remark} \label{q_M}
We derive a formula for the quadratic form $q_M: A_M \rightarrow \bQ/2\bZ$:
$$
q_M(a\gamma^* + b\alpha_1^* + c\alpha_2^* + d\beta_1^* + e\beta_2^* + f\xi^*) \equiv b^2 + c^2 +bc + d^2 + e^2 + de + (a+b+c+d+e)f - \frac12 f^2 \in \bQ/2\bZ.
$$
\end{remark}

\begin{proposition} \label{primitive_irreducible}
Let $S$ be a $K3$ surface. If $\jmath: M\hookrightarrow \Pic(S)$ is a lattice embedding such that $\jmath(h)$ is a base point free polarization and $\jmath(l_1')$, $\jmath(l_2')$, $\jmath(\alpha_i)$, and $\jmath(\beta_j)$ ($1 \leqslant i,j \leqslant 3$) all represent irreducible curves, then $\jmath$ is a primitive embedding.
\end{proposition}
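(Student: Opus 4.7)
The plan is to show directly that the saturation $M' \subseteq \Pic(S)$ of $\jmath(M)$ equals $\jmath(M)$. Since $\Pic(S)$ is torsion free, $\jmath$ fails to be primitive precisely when $M' \supsetneq \jmath(M)$. By the standard correspondence between overlattices of an even lattice $L$ and isotropic subgroups of the discriminant form $(A_L,q_L)$, it suffices to show that no nonzero isotropic element of $A_M$ lifts to a class in $\Pic(S)$.

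\textbf{Step 1: classify isotropic elements of $A_M$.} I would use the explicit quadratic form from \remarkref{q_M}, writing $\bar v = a\gamma^* + b\alpha_1^* + c\alpha_2^* + d\beta_1^* + e\beta_2^* + f\xi^*$ with $a,b,c,d,e,f\in\{0,1\}$, and split on $f$:
\begin{itemize}
\item If $f=0$, then $q_M(\bar v) \equiv b^2+c^2+bc+d^2+e^2+de \pmod{2\bZ}$. A direct check on the four possibilities for $(b,c)\in\{0,1\}^2$ shows $b^2+c^2+bc$ is congruent to $0 \bmod 2$ only when $(b,c)=(0,0)$; similarly for $(d,e)$. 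Thus $\bar v=a\gamma^*$, giving only $0$ and $\gamma^*$.
\item If $f=1$, then $q_M(\bar v)$ has the form (integer) $-\tfrac12$, which is never in $2\bZ$, so there are no isotropic elements.
\end{itemize}
Hence the only nonzero isotropic element of $A_M$ is $\gamma^* = \tfrac12\xi \in M^*$.

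\textbf{Step 2: rule out the one candidate overlattice via \theoremref{mayer}.} If $\jmath$ were not primitive, the previous step forces $M' = \jmath(M) + \bZ\cdot v$ where $v := \tfrac12\jmath(\xi) \in \Pic(S)$. From the Gram matrix and $h=\gamma+\xi$ I compute
\[
(v,v) = \tfrac14(\xi,\xi)=0, \qquad (v,h) = \tfrac12(\xi,\gamma)+\tfrac12(\xi,\xi) = 1.
\]
Since $\jmath(h)$ is base-point free, it is in particular nef and big (as $h^2=2>0$), so \theoremref{mayer} forbids the existence of such a class $v$ in $\Pic(S)$. This contradiction shows $M' = \jmath(M)$, i.e.\ $\jmath$ is primitive.

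The main obstacle is the finite case analysis in Step~1, where one must carefully unpack the formula for $q_M$; the split on the $\xi^*$-coefficient $f$ is what makes it tractable. Note that this argument uses only the base-point-freeness of $\jmath(h)$; the irreducibility hypotheses on $\jmath(l_i')$, $\jmath(\alpha_i)$, $\jmath(\beta_j)$ belong to the geometric set-up (ensuring the embedding is of the type coming from a genuine $K3$ triple) and do not enter this primitivity argument in an essential way.
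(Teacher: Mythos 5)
Your classification of the isotropic elements of $A_M$ in Step~1 is wrong, and the error sits exactly where the content of the proposition lives. For $f=0$ the form reads $q_M(\bar v)\equiv (b^2+c^2+bc)+(d^2+e^2+de)\in\bQ/2\bZ$; each summand is $0$ for the pair $(0,0)$ and odd otherwise, but for the \emph{sum} to vanish in $\bQ/2\bZ$ it is enough that the two summands have the same parity --- they need not vanish separately. Taking $(b,c)=(1,0)$ and $(d,e)=(1,0)$ gives $q_M(\alpha_1^*+\beta_1^*)\equiv 1+1\equiv 0$, so $\alpha_1^*+\beta_1^*$ is isotropic; and since the coefficient $a$ of $\gamma^*$ does not appear when $f=0$, all of $\alpha_i^*+\beta_j^*$ and $\alpha_i^*+\beta_j^*+\gamma^*$ ($1\leqslant i,j\leqslant 3$) are isotropic. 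In total $A_M$ has $20$ isotropic elements, not $2$ (the paper re-uses this count in \sectionref{sec:BB}); up to the relations $\alpha_3^*=\gamma^*+\alpha_1^*+\alpha_2^*$ and $\beta_3^*=\gamma^*+\beta_1^*+\beta_2^*$ they fall into the three cases treated in the paper's proof.

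Consequently your Step~2 disposes only of the overlattice generated by $\gamma^*=\tfrac12\xi$, which is Case~1 of the paper's argument and indeed uses only base-point-freeness of $\jmath(h)$. The candidates you missed --- a class $y$ with $2y=\alpha_2+\alpha_3+\beta_2+\beta_3$ (from $\alpha_1^*+\beta_1^*$) or $2z=\alpha_2+\alpha_3+\beta_2+\beta_3+\xi$ (from $\alpha_1^*+\beta_1^*+\gamma^*$), and their analogues --- are \emph{not} excluded by \theoremref{mayer}: such a class has self-intersection $-2$, so $\pm y$ is effective, and one must invoke the irreducibility of $\jmath(\alpha_i)$, $\jmath(\beta_j)$, $\jmath(l_1')$, $\jmath(l_2')$ to force $\alpha_2,\alpha_3,\beta_2,\beta_3$ (and, in the second case, also $l_1',l_2'$) into the support of the effective representative and contradict $(y,2h+l_1')=1$, respectively $(z,h)=1$. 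Your closing remark that the irreducibility hypotheses do not enter the argument in an essential way is therefore a symptom of the gap rather than a feature of the proof: without those hypotheses the statement is not a consequence of base-point-freeness alone.
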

\begin{proof}
Assume that $\jmath$ is not primitive. Then the embedding $\jmath$ must factor through the saturation $\mathrm{Sat}(M)$ of $M$ which is a non-trivial even overlattice of $M$: $M \subsetneq \mathrm{Sat}(M) \hookrightarrow \Pic(S)$. By \cite[Proposition 1.4.1]{nikulin}, there is a bijection between even overlattices of $M$ and isotropic subgroups of $A_M=M^*/M$ (which are generated by isotropic elements, i.e. $v\in A_M$ such that $q_M(v)=0$). Using \lemmaref{A_M} and \remarkref{q_M}, it is easy to classify the isotropic elements of $A_M$. As $\alpha_1^* + \alpha_2^* +\gamma^*= \alpha_3^*$ and $\beta_1^* + \beta_2^* +\gamma^*= \beta_3^*$ in $A_M$, there are only three cases to consider. We drop the embedding $\jmath$ in the rest of the proof.
\begin{itemize}
\item{(Case 1)} The isotropic element is $\gamma^*$. From the columns of $G_M^{-1}$ we see that $\gamma^* = \frac12 \xi \in A_M$. Hence, we have $\xi = 2x$ for some $x \in \Pic(S)$. But then $(x, x) =\frac{1}{2}(\xi,\xi)= 0$ and $(h,x) =\frac12(h, \xi)= 1$ which would imply that $h$ is not base point free by \theoremref{mayer}.
\item{(Case 2)} The isotropic element is $\alpha_i^* + \beta_j^*$ where $1 \leqslant i,j \leqslant 3$. Let us take $\alpha_1^* + \beta_1^*$ for example. The other cases are similar. Note that $\alpha_1^* + \beta_1^* = - \frac12 \alpha_2 -\frac12 \alpha_3 -\frac12 \beta_2 - \frac12 \beta_3$ in $A_M$. We have $\alpha_2 + \alpha_3 + \beta_2 + \beta_3 = 2y$ for some $y \in \Pic(S)$. Because $S$ is a K3 surface and $(y,y)=-2$, either $y$ or $-y$ is effective. Note that $l_1'$, $l_2'$, $\alpha_i$ and $\beta_j$ ($1 \leqslant i,j \leqslant 3$) are  irreducible curves (by the assumption), $h$ is nef and $(2h+l_1',l_1')=0$. 
It follows that $2h+l_1'$ is nef. Since $(2h+l_1', y) = 1$, $y$ is effective. Because $(y,\alpha_2) = (y, \alpha_3) = (y, \beta_2) = (y, \beta_3) = -1$, we know $\alpha_2$, $\alpha_3$, $\beta_2$ and $\beta_3$ are in the support of $y$. Write $y = m\alpha_2 + n\alpha_3 + k \beta_2 + l \beta_3 + D = \frac12(\alpha_2 + \alpha_3 + \beta_2 + \beta_3)$ where $D$ is an effective divisor, $\alpha_2,\alpha_3,\beta_2,\beta_3\not\subset\mathrm{Supp}(D)$ and $m,n,k,l \geqslant 1$. But then we have a contradiction
$$1=(y, 2h+l_1') \geqslant m+n\geqslant 2.$$
\item{(Case 3)} The isotropic element is $\alpha_i^* + \beta_j^* + \gamma^*$ where $1 \leqslant i,j \leqslant 3$. Take $\alpha_1^* + \beta_1^* + \gamma^*$ for example. Since $\alpha_1^* + \beta_1^* + \gamma^* = \frac12 \alpha_2 + \frac12 \alpha_3 + \frac12 \beta_2 + \frac12 \beta_3 + \frac12 \xi$ in $A_M$, there exists an element $z$ of $\Pic(S)$ such that $2z = \alpha_2 +  \alpha_3 +  \beta_2 +  \beta_3 +  \xi$. Because $S$ is a K3 surface, $(z,z) = -2$ and $(z,h) = 1$, the class $z$ represents an effective divisor. By the assumption $l_1'$, $l_2'$, $\alpha_i$ and $\beta_j$ ($1 \leqslant i,j \leqslant 3$) represent irreducible curves. Note that $(z, \alpha_2) = (z, \alpha_3) = (z, \beta_2) = (z, \beta_3) = -1 < 0$. Let us write $z = m\alpha_2 + n\alpha_3 + k\beta_2 + l\beta_3 + D$, where $D$ is effective, $\alpha_2,\alpha_3,\beta_2,\beta_3\not\subset \mathrm{Supp}(D)$ and $m,n,k,l > 0$. Then we have $$2D = (1-2m)\alpha_2 + (1-2n)\alpha_3 + (1-2k)\beta_2 + (1-2l)\beta_3$$ which implies that $(D, l_1') < 0$ and $(D, l_2') < 0$. Now we write $$z = m\alpha_2 + n\alpha_3 + k\beta_2 + l\beta_3 + sl_1' + tl_2'+D'$$ where $D'$ is effective,  $\alpha_2,\alpha_3,\beta_2,\beta_3, l_1',l_2'\not\subset \mathrm{Supp}(D)$ and $m,n,k,l,s,t \geqslant 1$. But this is impossible: $2\leqslant s+t \leqslant (z, h) = 1$.
\end{itemize}
\end{proof}


\begin{corollary} \label{generic primitive}
Let $C$ be a smooth plane quartic curve and $L_1$, $L_2$ two distinct lines such that $C+L_1+L_2$ has simple normal crossings and let $\jmath: M\hookrightarrow \Pic(S_{(C,L_1,L_2)})$ be the lattice embedding given in \notationref{lattice M}. Then $\jmath$ is a primitive embedding.
\end{corollary}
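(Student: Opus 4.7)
The plan is to derive this as an immediate consequence of \propositionref{primitive_irreducible} by verifying its two hypotheses in the current geometric setting.

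First I would check that $\jmath(h)$ is a base point free polarization. Recall from \notationref{lattice M} that $\jmath(h) = \jmath(\gamma) + \jmath(\xi)$ coincides with the pullback $\pi^*\calO_{\bP^2}(1)$ under the degree two map $\pi: S_{(C,L_1,L_2)} \to \bP^2$. Since $|\calO_{\bP^2}(1)|$ is base point free on $\bP^2$, the linear system $|\jmath(h)|$ is base point free on $S_{(C,L_1,L_2)}$. In particular, $\jmath(h)$ is nef, and $(\jmath(h), \jmath(h)) = 2 > 0$ shows it is big as well, so it is a polarization in the sense used here.

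Next I would verify that $\jmath(l_1')$, $\jmath(l_2')$, $\jmath(\alpha_i)$, $\jmath(\beta_j)$ all represent irreducible curves. Because $C+L_1+L_2$ has simple normal crossings, the nine singular points of the double cover $\bar{S}_{(C,L_1,L_2)}$ are ordinary double points, and the exceptional divisors $\alpha_i$, $\beta_j$, $\gamma$ of the minimal resolution $S_{(C,L_1,L_2)} \to \bar{S}_{(C,L_1,L_2)}$ are smooth rational $(-2)$-curves, hence irreducible. Similarly, since $L_1$ and $L_2$ are smooth and meet $C$ transversally, their strict transforms under the double cover are smooth irreducible curves, and $\jmath(l_1')$, $\jmath(l_2')$ are their classes.

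With both hypotheses satisfied, \propositionref{primitive_irreducible} applies and yields that $\jmath: M \hookrightarrow \Pic(S_{(C,L_1,L_2)})$ is a primitive embedding. There is no serious obstacle here; the corollary is essentially a reinterpretation of the previous proposition once one observes that the SNC hypothesis on $C+L_1+L_2$ ensures precisely the geometric conditions needed.
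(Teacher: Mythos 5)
Your proposal is correct and matches the paper's intent exactly: the paper states this corollary without proof precisely because it follows from \propositionref{primitive_irreducible} once one notes (as already observed in \notationref{lattice M}) that $\jmath(h)=\pi^*\calO_{\bP^2}(1)$ is base point free and that, under the simple normal crossings hypothesis, the exceptional $(-2)$-curves and the strict transforms of the lines are irreducible. Nothing further is needed.
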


The proof of \propositionref{primitive_irreducible} can easily be adapted to proof the following Lemma.
\begin{lemma} \label{primitive}
Let $S$ be a $K3$ surface and $\jmath: M\hookrightarrow \Pic(S)$ be a lattice embedding. If none of $\jmath(\xi)$, $\jmath(\alpha_i + \alpha_{i'} + \beta_j + \beta_{j'})$ or $\jmath(\alpha_i + \alpha_{i'} + \beta_j + \beta_{j'} + \xi)$ ($1 \leqslant i, i', j, j' \leqslant 3$) is divisible by $2$ in $\Pic(S)$, then the embedding $\jmath$ is primitive.
\end{lemma}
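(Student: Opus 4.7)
The plan is to mirror the first part of the proof of \propositionref{primitive_irreducible}, stopping short of the geometric/effectivity arguments used there. The entire content of \lemmaref{primitive} is really that the divisibility conditions in its hypothesis are exactly the obstructions to primitivity coming from each of the three types of isotropic classes in $A_M$.

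First I would suppose, for contradiction, that $\jmath$ is not primitive. Then $\jmath$ factors through the saturation $\mathrm{Sat}(M) \subset \Pic(S)$, giving a nontrivial even overlattice $M \subsetneq N \hookrightarrow \Pic(S)$. By Nikulin's \cite[Prop.~1.4.1]{nikulin}, nontrivial even overlattices of $M$ correspond bijectively to nontrivial isotropic subgroups of the discriminant group $A_M$ (equipped with $q_M$). Hence $N$ contains a nonzero isotropic element $v \in A_M$.

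Next I would use the classification of isotropic elements of $A_M$ carried out in the proof of \propositionref{primitive_irreducible}: applying \lemmaref{A_M} together with the explicit quadratic form in \remarkref{q_M}, and using the relations $\alpha_3^* \equiv \alpha_1^* + \alpha_2^* + \gamma^*$ and $\beta_3^* \equiv \beta_1^* + \beta_2^* + \gamma^*$ in $A_M$, the nonzero isotropic elements of $A_M$ fall into exactly three types:
\begin{enumerate}
\item $v = \gamma^*$;
\item $v = \alpha_i^* + \beta_j^*$ for some $1 \leqslant i,j \leqslant 3$;
\item $v = \alpha_i^* + \beta_j^* + \gamma^*$ for some $1 \leqslant i,j \leqslant 3$.
\end{enumerate}
Reading the coefficients of the dual classes from the columns of $G_M^{-1}$ exactly as in the proof of \propositionref{primitive_irreducible}, a lift of $v$ to $\Pic(S)$ furnishes a class $w \in \Pic(S)$ satisfying: $2w = \xi$ in case (1); $2w$ equals $\sum_{i'\ne i}\alpha_{i'} + \sum_{j'\ne j}\beta_{j'}$ (that is, a sum of the form $\alpha_a+\alpha_b+\beta_c+\beta_d$ with $a,b,c,d \in \{1,2,3\}$) in case (2); and $2w = \sum_{i'\ne i}\alpha_{i'} + \sum_{j'\ne j}\beta_{j'} \pm \xi$ in case (3).

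Finally, each of these three possibilities is ruled out by the hypotheses: case (1) is excluded because $\jmath(\xi)$ is not divisible by $2$; case (2) is excluded because no $\jmath(\alpha_i+\alpha_{i'}+\beta_j+\beta_{j'})$ is divisible by $2$; and case (3) is excluded because no $\jmath(\alpha_i+\alpha_{i'}+\beta_j+\beta_{j'}+\xi)$ is divisible by $2$ (note that divisibility by $2$ is insensitive to the sign of $\xi$). Hence no nontrivial isotropic element of $A_M$ can lift to $\Pic(S)$, so $\mathrm{Sat}(M)=M$ and $\jmath$ is primitive. The step with the most bookkeeping is the verification that the three cases listed above genuinely exhaust the isotropic elements of $A_M$ modulo the symmetries $\alpha_3^* = \alpha_1^*+\alpha_2^*+\gamma^*$ and $\beta_3^* = \beta_1^*+\beta_2^*+\gamma^*$, but this is already carried out inside the proof of \propositionref{primitive_irreducible} and simply needs to be invoked here.
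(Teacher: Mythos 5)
Your argument is correct and is precisely the adaptation the paper has in mind: the text introduces \lemmaref{primitive} by saying the proof of \propositionref{primitive_irreducible} "can easily be adapted," and your write-up carries out exactly that adaptation --- Nikulin's overlattice/isotropic-subgroup correspondence, the three-case classification of nonzero isotropic elements of $A_M$, and the reading of their lifts from $G_M^{-1}$ --- replacing the geometric effectivity arguments of the proposition with the stated non-divisibility hypotheses. No gaps; the case analysis is exhaustive since the $20$ isotropic elements of $A_M$ are $0$, $\gamma^*$, $\alpha_i^*+\beta_j^*$ and $\alpha_i^*+\beta_j^*+\gamma^*$, as the paper also records in \sectionref{sec:BB}.
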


\begin{proposition} \label{generic surjectivity}
Assume that $S$ is a $K3$ surface such that $\Pic(S)$ is isomorphic to the lattice $M$. Then $S$ is the double cover of $\bP^2$ branched over a reducible curve $C + L_1 + L_2$ where $C$ is a smooth plane quartic, $L_1, L_2$ are lines and $C+L_1+L_2$ has simple normal crossings.
\end{proposition}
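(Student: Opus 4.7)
The plan is to mirror the argument of Laza \cite[Proposition 4.13]{laza_n16}. First I would fix a lattice isometry $\phi\colon M\xrightarrow{\sim}\Pic(S)$. Since $h^2=2>0$, after possibly replacing $\phi$ by $-\phi$ and then composing with a suitable element of $W(S)$ (which acts transitively on the chambers of the positive cone cut out by the $(-2)$-classes in $\Pic(S)$), I can arrange that $\phi(h)$ lies in the closure of the K\"{a}hler cone, so that $\phi(h)$ is nef. Because $\phi(h)\cdot\phi(l_i')=1$ and $\phi(h)$ is nef, Riemann--Roch forces $\phi(l_1')$ and $\phi(l_2')$ to be effective. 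A further application of reflections in $(-2)$-classes orthogonal to $\phi(h)$ allows me to assume that $\phi(\gamma)$ and each $\phi(\alpha_i),\phi(\beta_j)$ ($1\leqslant i,j\leqslant 4$) are effective as well, i.e.\ are represented by effective $(-2)$-divisors.

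The critical step is to show that $\phi(h)$ is base point free. By \theoremref{mayer}, this reduces to ruling out the existence of a class $E\in M$ with $E^2=0$ and $E\cdot h=1$. Writing
\[ E=a\gamma+bl_1'+c_1\alpha_1+c_2\alpha_2+c_3\alpha_3+dl_2'+e_1\beta_1+e_2\beta_2+e_3\beta_3+f\xi, \]
the two conditions become the linear constraint $b+d+2f=1$ together with a quadratic Diophantine equation coming from $E^2=0$. A finite case analysis in the same spirit as Cases 1--3 of the proof of \propositionref{primitive_irreducible} --- completing the square in the $c_i$ and $e_j$ variables and using $c_i(1-c_i)\leqslant 0$ for $c_i\in\bZ$ --- should eliminate all possibilities. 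Consequently $\phi(h)$ is base point free and defines a degree-$2$ morphism $\phi_h\colon S\to \bP^2$.

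Since $K_S=0$, the ramification divisor of $\phi_h$ equals $3\phi(h)$, so the branch locus $B\subset\bP^2$ is a plane sextic. The $(-2)$-classes $\phi(l_1')$ and $\phi(l_2')$ satisfy $\phi(h)\cdot\phi(l_i')=1$ and are irreducible, so they map isomorphically onto distinct lines $L_1,L_2\subset\bP^2$; the identity $\phi(h)=2\phi(l_1')+\phi(\alpha_1)+\phi(\alpha_2)+\phi(\alpha_3)+\phi(\alpha_4)+\phi(\gamma)$ (and the analogue with $l_2'$ and the $\beta$'s) yields $\phi_h^{*}L_i=2\phi(l_i')+(\text{exceptional part})$, which forces $L_i\subset B$ for $i=1,2$. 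The residual component $C:=B-L_1-L_2$ is therefore a plane quartic. The remaining nine $\phi(h)$-orthogonal $(-2)$-curves $\phi(\gamma),\phi(\alpha_1),\ldots,\phi(\alpha_4),\phi(\beta_1),\ldots,\phi(\beta_4)$ are contracted by $\phi_h$, and their incidence pattern inside $M$ matches exactly the exceptional configuration of a minimal resolution of the nine transverse nodes $L_1\cap L_2$, $C\cap L_1$, and $C\cap L_2$ of a sextic of the required form.

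Finally, $C$ must be smooth and $C+L_1+L_2$ must have simple normal crossings, since any additional singularity of the branch divisor would require extra exceptional $(-2)$-curves in the minimal resolution $S$, forcing $\rho(S)>10=\operatorname{rk}(M)$ and contradicting $\Pic(S)\cong M$. I expect the main obstacle to be the base-point-free step: verifying the non-existence of a class $E\in M$ with $E^2=0$ and $E\cdot h=1$ is a finite but nontrivial lattice computation, and everything downstream (identification of the branch sextic and of the nine exceptional curves) then follows formally from the intersection numbers encoded in $G_M$.
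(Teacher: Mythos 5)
Your overall strategy coincides with the paper's: normalize $h$ to be nef and the remaining generators to be effective, prove that $h$ is base point free via \theoremref{mayer}, and then read off the branch sextic and its two line components from the intersection numbers. One sub-step where you genuinely diverge is the base-point-freeness: you propose a direct Diophantine computation in $M$, whereas the paper transports the hypothetical class $D$ with $D^2=0$, $D\cdot h=1$ to the $K3$ surface of a reference triple $(Q,L,L')$ in general position, where \theoremref{mayer} would contradict base point freeness of the pullback of $\calO_{\bP^2}(1)$. Your route works and is cleaner than you fear: writing $D=a_0l_1'+\sum a_i\alpha_i+b_0l_2'+\sum b_j\beta_j+c\gamma$, one gets $(D,h)=a_0+b_0=1$ and $(D,D)=-2\bigl(\sum k_i^2+\sum l_j^2\bigr)+2c(a_0+b_0-c)$ with $k_i=\tfrac{a_0}{2}-a_i$ and $l_j=\tfrac{b_0}{2}-b_j$, so $(D,D)=0$ forces $\sum k_i^2+\sum l_j^2=c(1-c)\leqslant 0$, hence all $k_i=l_j=0$, hence $a_0$ and $b_0$ both even, contradicting $a_0+b_0=1$.

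The genuine gap is your unproved assertion that $\phi(l_1')$, $\phi(l_2')$, $\phi(\alpha_i)$, $\phi(\beta_j)$ are \emph{irreducible}. After the reflections you have only arranged that they are effective $(-2)$-\emph{classes}; since $h$ is merely nef, $(h,l_1')=1$ only shows $l_1'=C_0+E$ with $C_0$ irreducible of $h$-degree one and $E$ an effective $h$-trivial residue, and the $\alpha_i$, $\beta_j$ could a priori be reducible or non-reduced cycles (e.g.\ fundamental cycles of worse singularities) rather than single exceptional $(-2)$-curves. Your subsequent steps all lean on irreducibility: that $\phi_h^{*}L_i=2\phi(l_i')+(\text{exceptional part})$, that the nine contracted classes lie over nine ordinary double points, and that the incidence pattern ``matches''. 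The paper closes this by exhibiting the ample class $h'=3h+l_1'+l_2'$ --- checking $(h',D)>0$ for every $D\in\Delta^+(S)$ via an explicit inequality --- and then using $(h',l_1')=(h',\alpha_i)=1$ to force irreducibility, after which Bezout (a line not contained in $B$ but passing through four nodes of $B$ would meet it with multiplicity at least $4\cdot 2>6$) gives $L_i\subset B$. You need this, or an equivalent positivity argument, before the identification of the branch configuration is justified; your closing Picard-rank argument for smoothness of $C$ and for simple normal crossings is fine once the configuration is pinned down.
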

\begin{proof}
By assumption there exist $h, \gamma, l_1', \alpha_1, \ldots, \alpha_4, l_2', \beta_1, \ldots, \beta_4\in \Pic(S)$ satisfying the numerical conditions in \notationref{lattice M}. Without loss of generality, we assume that $h$ is nef (this can be achieved by acting by $\pm W(S)$). Then $l_1'$ and $l_2'$ are both effective (as $(l_i',l_i')=-2$, $(h,l_i')=1$). We further assume that $\alpha_i$, $\beta_j$ ($1 \leqslant i,j \leqslant 4$) and $\gamma$ are effective (apply $s_{\alpha_i}$ or $s_{\beta_j}$ or $s_{\gamma}$ if necessary).  

As $h$ is nef and $(h,h)=2>0$, $h$ is a polarization of degree $2$. We will show that $h$ is base point free by \emph{reductio ad absurdum}. By \theoremref{mayer}, there exists a divisor $D$ such that $(D, D) = 0$ and $(h, D) = 1$. Note that this is a numerical condition. Write $D$ as a linear combination of $\gamma, l_1', \alpha_1, \ldots, \alpha_3, l_2', \beta_1, \ldots, \beta_3$ and $\xi$, with coefficients $c_1,\ldots, c_{10}$. Let   $S_{(Q,L,L')}$ be the $K3$ surface associated to a smooth quartic curve $Q$ and two lines $L$, $L'$ such that $Q+L+L'$ has simple normal crossings. Find the curve classes corresponding to $\gamma, l_1', \alpha_1, \ldots, \alpha_3, l_2', \beta_1, \ldots, \beta_3,\xi$ (as what we did at the beginning of this subsection) and consider their linear combination $D'$ with coefficients $c_1,\ldots, c_{10}$, the same values as in the expression for $D$. Then, both $D$ and $D'$ satisfy the same numerical conditions in $S$ (respectively $S_{(Q,L,L')}$) with respect to the divisor class $h= \gamma + \xi$. Again, by \theoremref{mayer} the pull-back $h$ of $\sim \calO_{\bP^2}(1)$ in $S_{(Q,L,L')}$ has base points, which gives a contradiction. So the linear system of $h$ defines a degree two map $\pi:S \rightarrow \bP^2$. Since $S$ is a $K3$ surface of degree $2$, the branching locus must be a sextic curve $B$.

Consider $h' = 3h + l_1' + l_2'$. Note that $(h', h')>0$ and $(h', h) > 0$. We can write any effective divisor as
\begin{equation}
D=a_0l_1'+a_1\alpha_1+\cdots+a_4\alpha_4+b_0l_2'+b_1\beta_1+\cdots+b_4\beta_4+c\gamma,
\label{eq:basis_D}
\end{equation}
where $a_i, b_i, c\in \mathbb Z$, where $0\leqslant i\leqslant 4$. Let $k_i=\frac{1}{2}a_0-a_i$, $l_i=\frac{1}{2}b_0-b_i$ where $1\leqslant i\leqslant 4$. It follows that 
\begin{align}
(D,h)&=a_0+b_0, \qquad (D,l_1')=c-2a_0+\sum_{i=1}^4a_i, \qquad (D, l_2')= c-2b_0+\sum_{i=1}^4b_i, \label{eq:properties_D1}\\
(D,D)&=-2\left(\sum_{i=1}^4k_i^2 + \sum_{i=1}^4l_i^2\right)+2c(a_0+b_0-c). \label{eq:properties_D2}
\end{align}
Let $D\in \Delta(S)$ as in \eqref{eq:basis_D}. Then $(D,D)=-2$ implies that
\begin{equation}
c^2+\sum_{i=1}^4\left(k_i^2+l_i^2\right)=1+c(a_0+b_0).
\label{eq:properties_D3}
\end{equation}
First note that when $a_0+b_0=(D,h)=0$, then \eqref{eq:properties_D3} gives that either $c=\pm 1$ and $D=\pm \gamma$ or 
$c=0$, and all coefficients in $\{k_1,\ldots, k_4, l_1,\ldots, l_4\}$ but one equal $0$ and $D\in \{\pm \alpha_1,\ldots, \pm\alpha_4, \pm\beta_1,\cdots, \pm\beta_4\}$. In particular, $\langle h \rangle^{\perp}_M \cap \Delta(S) = \{\pm \gamma, \pm \alpha_1, \ldots, \pm \alpha_4, \pm \beta_1, \ldots, \pm \beta_4\}$. If $D\in \Delta^+(S)\cap \langle h \rangle^{\perp}_M$, then $(D,h)=a_0+b_0=0$ which in turn implies that $(h',D)=(l_1'+l_2',D)>0$.

Now suppose that $D\in \Delta^+(S)$ and $(h,D)>0$. Then \eqref{eq:properties_D1} implies that $a_0+b_0>0$ and \eqref{eq:properties_D3} gives  $c\geqslant 0$. Then, by the arithmetic-geometric mean inequality and \eqref{eq:properties_D3}, we get
\begin{align*}
(h',D)&=a_0+\cdots +a_4-2a_0+2b_0+\cdots b_4+2c=3a_0+3b_0+2c-\sum_{i=1}^4\left(k_i+l_i\right)\\
&\geqslant (3(a_0+b_0)+2c)-2\sqrt{2}\sqrt{1+c(a_0+b_0-c)}>0,
\end{align*}
where the latter inequality follows from observing that the first summand is positive and
\begin{align*}
(3a_0+3b_0+2c)^2-\left(2\sqrt{2}\sqrt{1+c(a_0+b_0-c)}\right)^2=9(a_0+b_0)^2+12c^2+4c(a_0+b_0)- 8\geqslant 1>0.
\end{align*}
Hence $(h',D)>0$ for all $D\in \Delta^+(S)$. Moreover, if $D \subset S$ is rational and $ D \not\in \Delta^+(S)$, then $\pi_*(D)\neq 0$ and $(h',D)=(\pi_*(h),\pi_*(D))=\deg(\pi_*(D))>0$. Hence, by \cite[Cor. 8.1.7]{huybrechts_k3}, $h'$ is ample. 

Because $(h', l_1')=1$, the class $l_1'$ is represented by an irreducible curve. Similarly, $l_2'$, $\alpha_i$ and $\beta_j$ ($1 \leqslant i,j \leqslant 3$) all correspond to irreducible curves. It follows that the irreducible rational curves $\alpha_1, \ldots,  \alpha_4, \beta_1, \ldots, \beta_4$ 
are contracted by $\pi$ to ordinary double points of the sextic $B$. Let $L_1'$ (respectively $L_2'$) be the unique irreducible curve in $S$ corresponding to the class $l_1'$ (respectively $l_2'$) and set $L_1 = \pi(L_1')$ (respectively $L_2 = \pi(L_2')$). Since $(l_1', h) = 1$, the projection formula implies that $L_1$ is a line. Moreover, the line $L_1$ has to pass through four ordinary double points of the branched curve $B$ since $(l_1', \alpha_1) = \ldots = (l_1', \alpha_4) = 1$. Similarly, $L_2$ is also a line passing through four different ordinary double points of $B$. (Note that both $L_1$ and $L_2$ pass through the singularity of $B$ corresponding to $\gamma$.) By Bezout's theorem, the two lines $L_1$ and $L_2$ are both components of $B$ (otherwise we have contradictions: $(L_1,B) = (l_1', \pi^*(B)) \geqslant \sum_{p_i\in B\cap L_1}\mathrm{mult}_{p_i}(B)\geqslant 4 \cdot 2 > 6$ and analogously for $L_2$). 
\end{proof}

\begin{corollary}  \label{generic Picard}
For a sufficiently general triple $(C, L_1, L_2)$ (i.e. outside the union of a countable number of proper subvarieties of the moduli space), the Picard lattice $\Pic(S_{(C,L_1,L_2)})$ coincides with $M$ via the embedding $\jmath$. 
\end{corollary}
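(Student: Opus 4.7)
The plan is to combine the primitive embedding result \corollaryref{generic primitive} with a dimension count for the moduli of triples versus the moduli of $M$-polarized $K3$ surfaces, and then invoke the standard Noether--Lefschetz argument on the period domain.

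First I would verify the numerics. The lattice $M$ has rank $10$, and since it contains the class $h$ with $(h,h)=2>0$ together with the $9$ linearly independent classes $\gamma, \alpha_1,\alpha_2,\alpha_3,\beta_1,\beta_2,\beta_3, l_1', l_2'$ that pair nonpositively in a negative-definite configuration modulo $\langle h \rangle$, the lattice $M$ has signature $(1,9)$. Consequently, the period domain $\calD$ parametrizing Hodge structures on $\Lambda_{K3}$ compatible with an $M$-polarization has complex dimension $20-\rank(M)=10$. On the other hand, the open locus in $\calM_0$ of triples $(C,L_1,L_2)$ with $C$ smooth and $C+L_1+L_2$ a simple normal crossing divisor has dimension $14+2+2-\dim \PGL_3 = 10$.

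Next I would dominate the period map. By the surjectivity of the period map for $K3$ surfaces, together with the existence of a primitive embedding $M\hookrightarrow \Lambda_{K3}$ (established in the generic geometric situation via $\jmath$, cf.\ \corollaryref{generic primitive}), there exists at least one $K3$ surface $S$ with $\Pic(S)\cong M$. By \propositionref{generic surjectivity}, every such $S$ arises as $S_{(C,L_1,L_2)}$ for a triple. Hence the period map $\calP:\calM_0\dashrightarrow \calD/\Gamma$ hits at least one point corresponding to $\Pic=M$, and since both source and target have dimension $10$, $\calP$ is dominant.

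Now I invoke Noether--Lefschetz. The locus in $\calD$ where the Picard lattice strictly contains $\jmath(M)$ is a countable union of proper analytic subvarieties, one for each even overlattice $M\subsetneq M'\subset M^\ast$ that embeds (in a way compatible with the polarization) into $\Lambda_{K3}$; each component is cut out by the Hodge condition requiring the extra class in $M'\setminus M$ to be of type $(1,1)$, which is a hyperplane section of $\calD$. Because $\calP$ is dominant, the preimage of this countable union is a countable union of proper subvarieties of $\calM_0$. For every triple $(C,L_1,L_2)$ outside this preimage we have $\jmath(M)\subseteq \Pic(S_{(C,L_1,L_2)})$ by \corollaryref{generic primitive} and the reverse inclusion by construction of the complement, so $\jmath(M)=\Pic(S_{(C,L_1,L_2)})$.

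The step I expect to be the main technical obstacle is the assertion that the NL pull-back is a \emph{proper} countable union in $\calM_0$, i.e.\ that $\calP$ is genuinely dominant. This reduces to exhibiting a single $M$-polarized $K3$ with $\Pic$ exactly equal to $M$, for which one needs to check that the orthogonal complement $T=M^\perp_{\Lambda_{K3}}$ has signature $(2,10)$ and admits a Hodge structure of $K3$ type with no extra $(1,1)$-classes; this is a standard existence argument via surjectivity of the period map combined with the Baire category theorem applied to the countable union of Noether--Lefschetz divisors in $\calD$.
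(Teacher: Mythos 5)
Your proposal is correct in substance but follows a genuinely different route from the proof written in the paper. The paper's displayed argument is geometric and self-contained at that point in the text: it takes the pencil of lines through $L_1\cap L_2$, observes that for a general triple the induced elliptic fibration on $S_{(C,L_1,L_2)}$ has exactly two $I_0^*$ fibers, twelve $I_1$ fibers and a $2$-section $\gamma$, applies the Shioda--Tate formula to the associated Jacobian fibration to get Picard number $10$, and then compares discriminants ($\mathrm{disc}(\Pic)=64=|\det G_M|$) to force $[\Pic(S_{(C,L_1,L_2)}):\jmath(M)]=1$. Your argument is the period-map/Noether--Lefschetz count, which is essentially the alternative the paper only alludes to via \cite[Cor.~6.19]{LazaThesis}; it uses machinery ($\calD$, $\Gamma$, the map $\calP$) that the paper only sets up in the subsequent subsection, though this causes no circularity since \propositionref{P birational} does not rely on \corollaryref{generic Picard}. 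What each buys: the fibration proof needs no Torelli or surjectivity input and yields the discriminant of the generic Picard lattice explicitly; your proof is the more standard and more portable argument for lattice-polarized families.

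Two points deserve tightening. First, the inference ``$\calP$ hits one point with $\Pic=M$ and source and target both have dimension $10$, hence $\calP$ is dominant'' is not valid as stated; but dominance is also not what you need. It suffices that the image of $\calP$ is not contained in any single Noether--Lefschetz hypersurface, and this already follows from the one point you produce (via surjectivity of the period map, Baire category, and \propositionref{generic surjectivity}): each component of the Noether--Lefschetz locus then pulls back to a closed analytic subset of $\calM_0$ missing that point, hence proper. Your closing paragraph effectively says this, so the gap is cosmetic. Second, the overlattices indexing the Noether--Lefschetz components should be the rank-$\geqslant 11$ sublattices of $\Lambda_{K3}$ containing $\jmath(M)$ (equivalently, extra $(1,1)$-classes in $T$), not the finite-index overlattices $M\subsetneq M'\subset M^*$; the latter are excluded once and for all by the primitivity of $\jmath$ from \corollaryref{generic primitive} and play no role in the Hodge-theoretic hyperplane condition.
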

\begin{proof}
The argument in \cite[Corollary 6.19]{LazaThesis} works here. Alternatively, let $L_1$ and $L_2$ be given by linear forms $l_1$ and $l_2$, respectively, and consider the elliptic fibration $S_{(C,L_1,L_2)} \rightarrow \bP^1$ defined by the function $\pi^*(l_1/l_2)$. If $(C, L_1, L_2)$ is sufficiently general, then the pencil of lines generated by $L_1$ and $L_2$ only consists of lines intersecting $C$ normally or lines tangent to $C$ at a point. As a result, the elliptic fibration contains $2$ reducible singular fibers of type $I_0^*$ (i.e. with $5$ components) and $12$ singular fibers of type $I_1$ (i.e. with one nodal component), where we follow Kodaira's notation as in \cite[\S V.7]{BPV} \cite[\S 11.1.3]{huybrechts_k3}. Note that the fibration admits a 2-section $\gamma$. Consider the associated Jacobian fibration $J(S_{(C,L_1,L_2)})) \rightarrow \bP^1$ (see for example \cite[\S 11.4]{huybrechts_k3}). By the Shioda-Tate formula \cite[Corollary 11.3.4 and Corollary 11.4.7]{huybrechts_k3}, the $K3$ surface $S_{(C,L_1,L_2)}$ has Picard number $10$ which equals the rank of $M$. Moreover, we have \cite[\S 11 (4.5)]{huybrechts_k3}:
$$\mathrm{disc}(\Pic(S_{(C,L_1,L_2)})) = 2^2 \cdot \mathrm{disc}(\Pic(J(S_{(C,L_1,L_2)}))) = 64.$$
It is easy to compute that the Gram matrix $G_M$ has determinant $(-64)$. The proposition then follows from the following standard fact on lattices (which implies $[\Pic(S_{(C,L_1,L_2)}): M]=1$):
$$\mathrm{disc}(M) = \mathrm{disc}(\Pic(S_{(C,L_1,L_2)})) \cdot [\Pic(S_{(C,L_1,L_2)}): M]^2. $$
As $M\hookrightarrow  (\Pic(S_{(C,L_1,L_2)})$ and they have the same rank and discriminant, then $M\cong  (\Pic(S_{(C,L_1,L_2)})$.
 \end{proof}

Now let us consider the case when $C$ has at worst simple singularities not contained in $L_1+L_2$ and $C+L_1+L_2$ has simple normal crossings away from the singularities of $C$. We still use $S_{(C,L_1,L_2)}$ to denote the $K3$ surface obtained as a minimal resolution of the double cover of $\bP^2$ along $C+L_1+L_2$. The rank $10$ lattice $M$ is the same as in \notationref{lattice M}.
\begin{lemma}
If $C$ has at worst simple singularities not contained in $L_1+L_2$ and $C+L_1+L_2$ has simple normal crossings away from the singularities of $C$, then there exists a primitive embedding $\jmath: M \hookrightarrow \Pic(S_{(C, L_1, L_2)})$ such that $\jmath(h)$ is a base point free degree two polarization. 
\end{lemma}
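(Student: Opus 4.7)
The plan is to mimic the construction in the generic smooth case and then invoke the preceding primitivity lemma. By hypothesis the simple singularities of $C$ are disjoint from $L_1 \cup L_2$, so the nine intersection points of the pairwise components of $C+L_1+L_2$ (the four points of $C\cap L_1$, the four points of $C\cap L_2$ and the single point of $L_1\cap L_2$) are ordinary nodes of the sextic at which the branch divisor has simple normal crossings, exactly as in the smooth case. First I would let $\pi\colon S_{(C,L_1,L_2)} \to \bP^2$ denote the composition of the minimal desingularization with the double cover map; over each of these nine nodes there sits an exceptional $(-2)$-curve, labeled $\alpha_1,\dots,\alpha_4$, $\beta_1,\dots,\beta_4$, $\gamma$ as in \notationref{lattice M}, and $L_1,L_2$ have strict transforms $l_1',l_2'$. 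Because the ADE singularities of $C$ lie outside $L_1\cup L_2$, the strict transforms $l_1', l_2'$ and the exceptional classes $\alpha_i,\beta_j,\gamma$ are all disjoint from the exceptional locus of the ADE resolution; in particular $l_1'$ and $l_2'$ remain \emph{irreducible} $(-2)$-curves. The local intersection computation is unchanged from the smooth case, so the Gram matrix of $\{\gamma, l_1',\alpha_1,\alpha_2,\alpha_3, l_2',\beta_1,\beta_2,\beta_3,\xi\}$ equals $G_M$, and this defines a lattice embedding $\jmath\colon M \hookrightarrow \Pic(S_{(C,L_1,L_2)})$.

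For base-point-freeness, I would observe that $\jmath(h) = \gamma + \xi$ coincides with $\pi^*\calO_{\bP^2}(1)$ by the same identity \eqref{eq:class-h} as in the smooth case; being the pullback of a base point free class via a morphism, it is itself base point free, and its self-intersection equals $2\deg(\pi) = 2$, so $\jmath(h)$ is a base point free polarization of degree two.

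For primitivity I would apply \lemmaref{primitive}. The three potentially isotropic classes in $A_M$ correspond to asking whether $\jmath(\xi)$, $\jmath(\alpha_i+\alpha_{i'}+\beta_j+\beta_{j'})$ or $\jmath(\alpha_i+\alpha_{i'}+\beta_j+\beta_{j'}+\xi)$ is divisible by $2$ in $\Pic(S_{(C,L_1,L_2)})$, and in each case the argument of the corresponding Case in \propositionref{primitive_irreducible} transfers verbatim. Case 1 uses only that $\jmath(h)$ is base point free, which has just been established, together with \theoremref{mayer}. Cases 2 and 3 use in addition that $l_1', l_2', \alpha_i,\beta_j$ are irreducible curves on $S_{(C,L_1,L_2)}$ and that $2h+l_1'$ is nef; both of these continue to hold here, the former by the disjointness of the ADE exceptional locus from $L_1\cup L_2$ noted above, and the latter by the same computation $(2h+l_1',l_1')=0$ used in \propositionref{primitive_irreducible}. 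Hence $\jmath$ is primitive.

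The main delicate point (and the step I expect to require the most care in the write-up) is the verification that the strict transforms $l_1', l_2'$ and the exceptional classes $\alpha_i, \beta_j, \gamma$ remain irreducible $(-2)$-curves after the ADE resolutions are included. Once this is secured from the disjointness hypothesis, the rest of the proof is a straightforward transplant of the arguments in the smooth case, and no new lattice-theoretic ingredient is needed.
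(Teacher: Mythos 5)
Your proposal is correct and follows essentially the same route as the paper: the paper's proof likewise defines $\jmath$ exactly as in the generic case (possible precisely because the intersections with $L_1$ and $L_2$ remain transversal away from the ADE points of $C$), observes that $\pi$ is given by $\jmath(h)$, and concludes primitivity by citing \propositionref{primitive_irreducible}, whose hypotheses (base point freeness of $\jmath(h)$ and irreducibility of $\jmath(l_1')$, $\jmath(l_2')$, $\jmath(\alpha_i)$, $\jmath(\beta_j)$) you verify explicitly. The only blemish is the harmless slip ``self-intersection equals $2\deg(\pi)$''; the correct computation is $(\pi^*\calO_{\bP^2}(1))^2=\deg(\pi)\cdot(\calO_{\bP^2}(1))^2=2$.
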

\begin{proof}
Thanks to the transversal intersection, we define the embedding $\jmath$ as in the generic case. In particular, the morphism $\pi: S_{(C, L_1, L_2)} \rightarrow \bP^2$ is defined by $\jmath(h)$. The embedding $\jmath$ is primitive by \propositionref{primitive_irreducible}.
\end{proof}

\subsection{$M$-polarized $K3$ surfaces and the period map} \label{sec:MK3}

In this subsection let us compute the (generic) Picard lattice $M$ and the transcendental lattice $T$. Then we shall determine the period domain $\calD$ and define a period map for generic triples $(C, L_1, L_2)$ via the periods of $M$-polarized $K3$ surfaces $S_{(C, L_1, L_2)}$.

\begin{definition} \label{M polarized}
Let $M$ be the lattice defined in \notationref{lattice M}. An \emph{$M$-polarized $K3$ surface} is a pair $(S, \jmath)$ such that $\jmath: M\hookrightarrow \Pic(S)$ is a primitive lattice embedding. The embedding $\jmath$ is called the \emph{$M$-polarization} of $S$. We will simply say that $S$ is an \emph{$M$-polarized $K3$ surface} when no confusion about $\jmath$ is likely.
\end{definition}

We now determine the lattice $M$ and show that it admits a unique primitive embedding into the $K3$ lattice $\Lambda_{K3}$.
\begin{proposition} \label{determine M T}
Let $M$ be the lattice defined in \notationref{lattice M}. Then $M$ is isomorphic to the lattice $U(2) \perp A_1^{2} \perp D_6$ and admits a unique primitive embedding (up to isometry) $M \hookrightarrow \Lambda_{K3}$ into the $K3$ lattice $\Lambda_{K3}$. The orthogonal complement $T := M_{\Lambda_{K3}}^{\perp}$ with respect to the embedding is isometric to $U \perp U(2) \perp A_1^{2} \perp D_6$.
\end{proposition}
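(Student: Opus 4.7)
\emph{Proof plan.} All three assertions follow from Nikulin's theorems on indefinite even lattices \cite[Cor.~1.13.3, Thm.~1.14.4]{nikulin}, applied after computing the invariants of $M$. The strategy is to match $M$ and $T$ with their candidate models by comparing signatures and discriminant forms, rather than by producing explicit isometries.

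\emph{Step 1: invariants of $M$.} The lattice $M$ has rank $10$ by construction. Its signature is $(1,9)$ because for a sufficiently general triple $M=\Pic(S_{(C,L_1,L_2)})$ by \corollaryref{generic Picard}, and the Picard lattice of a K3 surface has signature $(1,\rho-1)$ by the Hodge index theorem. The discriminant group $A_M\cong(\bZ/2\bZ)^6$ is computed in \lemmaref{A_M}, and the discriminant quadratic form $q_M$ is the explicit expression of \remarkref{q_M}.

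\emph{Step 2: identification $M\cong U(2)\perp A_1^2\perp D_6$.} The candidate $L_0:=U(2)\perp A_1^2\perp D_6$ has rank $10$, signature $(1,1)+(0,2)+(0,6)=(1,9)$, and discriminant group $(\bZ/2\bZ)^2\oplus(\bZ/2\bZ)^2\oplus(\bZ/2\bZ)^2\cong(\bZ/2\bZ)^6$. Using the standard formulae for the discriminant forms of $U(2)$, $A_1$ and $D_6$ (see e.g.\ \cite[\S1.5]{nikulin}), a direct comparison with the expression of \remarkref{q_M} produces an isomorphism $q_{L_0}\cong q_M$. Since $M$ is even and indefinite with $\operatorname{rank}(M)=10\geqslant 2+\ell(A_M)=8$, Nikulin's uniqueness criterion \cite[Cor.~1.13.3]{nikulin} then gives $M\cong L_0$.

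\emph{Step 3: primitive embedding and $T$.} Set $T_0:=U\perp L_0$, so $\operatorname{rank}(T_0)=12$, $\operatorname{sign}(T_0)=(2,10)$, and $q_{T_0}\cong q_{L_0}\cong q_M$ (the hyperbolic plane $U$ is unimodular). Consider the $\mathbb{F}_2$-linear automorphism $\phi$ of $A_M$ sending $\xi^*\mapsto\xi^*+\gamma^*$ and fixing the remaining generators from \lemmaref{A_M}. Substituting into the formula of \remarkref{q_M} one finds $q_M(\phi(x))-q_M(x)\equiv f^2\pmod{2\bZ}$, where $f$ is the coefficient of $\xi^*$ in $x$. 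A case check on $f\in\{0,1\}$ shows this equals $-2q_M(x)\pmod{2\bZ}$, so $q_M\circ\phi=-q_M$. Therefore $q_{T_0}\cong -q_M$, which is precisely the compatibility condition for gluing $M\oplus T_0$ to an even unimodular overlattice $N$ of signature $(3,19)$; since such a lattice is unique up to isometry, $N\cong\Lambda_{K3}$. This exhibits a primitive embedding $M\hookrightarrow\Lambda_{K3}$ with orthogonal complement isomorphic to $T_0$. Uniqueness of the embedding up to $O(\Lambda_{K3})$ follows from \cite[Thm.~1.14.4]{nikulin}, whose hypotheses are satisfied (both components of the complementary signature $(2,10)$ are positive, and $\operatorname{rank}(T_0)=12\geqslant\ell(A_M)+2$). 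Applying Nikulin's uniqueness criterion \cite[Cor.~1.13.3]{nikulin} once more to $T$ (which has rank $12\geqslant\ell(A_T)+2$) forces $T\cong T_0$.

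\emph{Main obstacle.} The only serious computation is the matching of discriminant forms $q_{L_0}\cong q_M$. It reduces to a finite comparison of quadratic and bilinear values on the six generators of $A_{L_0}$ and $A_M$ (the latter as in \lemmaref{A_M} and \remarkref{q_M}). Extra care is needed for the discriminant form of $D_{2k}$, which depends on the parity of $k$ (here $k=3$), and for sign conventions in the gluing procedure. With this identification in hand, everything else is a direct application of Nikulin's classification.
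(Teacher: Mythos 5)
Your proposal is correct and follows essentially the same route as the paper: compute the rank, signature and discriminant form of $M$, identify $M$ with $U(2)\perp A_1^{2}\perp D_6$ via Nikulin's uniqueness criterion \cite[Cor.~1.13.3]{nikulin}, get existence and uniqueness of the primitive embedding from \cite[Thm.~1.14.4]{nikulin}, and determine $T$ from the discriminant-form relation $(A_T,q_T)\cong(A_M,-q_M)$ (which the paper obtains by citing \cite[Prop.~1.6.1]{nikulin}). The only differences are cosmetic: you derive the signature from \corollaryref{generic Picard} and the Hodge index theorem rather than directly from $G_M$, and you make explicit the anti-isometry $q_M\cong -q_M$ that the paper leaves implicit in its decomposition $q_M=u\oplus w_{2,1}^{1}\oplus w_{2,1}^{1}\oplus w_{2,1}^{-1}\oplus w_{2,1}^{-1}$.
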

\begin{proof}
By \cite[Corollary 1.13.3]{nikulin} the lattice $M$ is uniquely determined by its invariants which can be easily computed from the Gram matrix $G_M$ (see also \lemmaref{A_M} and \remarkref{q_M}). 
\begin{itemize}
\item $M$ has rank $10$ and signature $(1,9)$.
\item The Gram matrix $G_M$ has determinant $(-64)$.
\item The discriminant group is $A_M \cong (\bZ/2\bZ)^{\oplus 6}$ with quadratic form $q_M = u \oplus w_{2,1}^{1} \oplus w_{2,1}^{1} \oplus w_{2,1}^{-1} \oplus w_{2,1}^{-1} $, where $u$, $w_{2,1}^{1}$ and $w_{2,1}^{-1}$ are the discriminant forms associated to $U(2)$, $E_7$ and $A_1$ respectively (cf. \cite[\S1.5 \& Appendix A]{belcastro} and references therein). Note that $w_{2,1}^{1} \oplus w_{2,1}^{1}$ is isomorphic to the discriminant form of $D_6$. 
\end{itemize}
By \cite[Theorem 1.14.4]{nikulin} the lattice $M$ admits a unique primitive embedding into $\Lambda_{K3}$. The claim on the orthogonal complement $T$ follows from \cite[Proposition 1.6.1]{nikulin}. 
\end{proof}

\begin{remark}
Note that both $M$ and $T$ are even indefinite $2$-elementary lattices (a lattice $L$ is $2$-elementary if $L^*/L\cong \left(\mathbb Z/2\mathbb Z\right)^k$ for some $k)$. One could also invoke Nikulin's classification \cite[Theorem 3.6.2]{nikulin} of such lattices to prove the previous proposition.  Moreover, $M$ and $T$ are orthogonal to each other in a unimodular lattice and hence $(A_M, q_M) \cong (A_T, -q_T)$, so $O(q_M) \cong O(q_T)$. 
\end{remark}

The moduli space of $M$-polarized $K3$ surfaces is a quotient $\calD/\Gamma$ for a certain Hermitian symmetric domain $\calD$ of type IV and some arithmetic group $\Gamma$ (see \cite{dolgachev_MSK3}). Fix the (unique) embedding $M \hookrightarrow \Lambda_{K3}$ and define
\begin{equation}
\calD = \{\omega \in \bP(\Lambda_{K3} \otimes \bC) \;|\; (\omega, \omega) = 0, (\omega, \bar{\omega}) > 0, \omega \perp M\}_0
\label{eq:period-domain1}
\end{equation}
to be one of the two connected components. Note that $\calD$ can also be identified with 
\begin{equation}
\calD\cong \{\omega \in \bP(T \otimes \bC) \;|\; (\omega, \omega) = 0, (\omega, \bar{\omega}) > 0\}_0.
\label{eq:period-domain2}
\end{equation}

To specify the moduli of $M$-polarized $K3$ surfaces, one also needs to determine the arithmetic group $\Gamma$. In the standard situation considered in \cite{dolgachev_MSK3} it is required that the $M$-polarization is pointwise fixed by the arithmetic group and one takes $\Gamma$ to be $O^*(T)$. In our geometric context the choice is different. Specifically, the permutations among $\alpha_1, \ldots, \alpha_4$ and among $\beta_1, \ldots, \beta_4$ are allowed. Observe that at the moment we view the lines $L_1$ and $L_2$ as labeled lines, distinguishing the tuples $(C, L_1, L_2)$ and $(C, L_2, L_1)$ and we do not consider the isometry of $M$ induced by flipping the two lines. 

Let $L$ be an even lattice. Recall that any $g \in O(L)$ naturally induces $g^* \in O(L^*)$ by $g^*\varphi: v \mapsto \varphi(g^{-1}v)$ (which further defines an automorphism of $A_L$ preserving $q_L$, therefore giving a natural homomorphism $r_L\colon O(L)\rightarrow O(q_L)$). 
\begin{lemma}
The homomorphisms $r_M: O(M) \rightarrow O(q_M)$ and $r_T: O(T) \rightarrow O(q_T)$ are both surjective. 
\end{lemma}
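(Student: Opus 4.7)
The plan is to apply Nikulin's general criterion for surjectivity of the natural map from the orthogonal group of a lattice onto the orthogonal group of its discriminant form. Recall \cite[Theorem 1.14.2]{nikulin}: if $L$ is an even indefinite lattice satisfying $\mathrm{rk}(L) \geqslant \ell(A_L) + 2$, where $\ell(A_L)$ is the minimal number of generators of the discriminant group $A_L$, then $r_L: O(L) \to O(q_L)$ is surjective. The strategy is simply to verify the hypotheses for both $M$ and $T$ using the explicit descriptions obtained in \propositionref{determine M T}.

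First I would check the numerical hypothesis for $M$. By \propositionref{determine M T} the lattice $M \cong U(2) \perp A_1^{2} \perp D_6$ has rank $10$ and signature $(1,9)$, so it is indefinite. By \lemmaref{A_M} the discriminant group $A_M \cong (\bZ/2\bZ)^{\oplus 6}$, hence $\ell(A_M) = 6$, and
\[
\mathrm{rk}(M) = 10 \;\geqslant\; \ell(A_M) + 2 \;=\; 8.
\]
Nikulin's theorem thus applies and gives the surjectivity of $r_M$.

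Next I would verify the same for $T$. By \propositionref{determine M T}, $T \cong U \perp U(2) \perp A_1^{2} \perp D_6$ has rank $12$ and signature $(2,10)$, so again $T$ is indefinite. Since $T$ is the orthogonal complement of $M$ inside the unimodular lattice $\Lambda_{K3}$, one has the canonical anti-isometry $(A_M, q_M) \cong (A_T, -q_T)$ (see \cite[Proposition 1.6.1]{nikulin}), in particular $\ell(A_T) = \ell(A_M) = 6$. Therefore
\[
\mathrm{rk}(T) = 12 \;\geqslant\; \ell(A_T) + 2 \;=\; 8,
\]
and \cite[Theorem 1.14.2]{nikulin} yields the surjectivity of $r_T$.

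There is no genuine obstacle here: the content of the statement is purely the verification that the two lattices $M$ and $T$, whose explicit isometry classes were computed in \propositionref{determine M T}, fall within the range of Nikulin's surjectivity criterion. The only place where one has to be a little careful is in recording that $\ell(A_T) = \ell(A_M)$, which follows at once from the anti-isometry of discriminant forms attached to the orthogonal decomposition of a unimodular lattice. Alternatively, since both $M$ and $T$ are even indefinite $2$-elementary lattices, one could invoke Nikulin's more refined classification/surjectivity statement for $2$-elementary lattices in \cite[\S 3]{nikulin}, which in this rank range yields the same conclusion.
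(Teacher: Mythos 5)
Your argument is correct and is precisely the paper's proof: the paper simply cites \lemmaref{A_M} together with \cite[Theorem 1.14.2]{nikulin}, and your write-up just spells out the verification of the hypotheses $\mathrm{rk}(M)=10\geqslant \ell(A_M)+2=8$ and $\mathrm{rk}(T)=12\geqslant \ell(A_T)+2=8$, using the anti-isometry $(A_M,q_M)\cong(A_T,-q_T)$ to transfer $\ell$. No issues.
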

\begin{proof}
The lemma follows from \lemmaref{A_M} and \cite[Theorem 1.14.2]{nikulin}. 
\end{proof}

In particular, we have $O(M) \twoheadrightarrow O(q_M) \cong O(q_T) \twoheadleftarrow O(T)$. By \cite[Theorem 1.6.1, Corollary 1.5.2]{nikulin}, an automorphism $g_M \in O(M)$ can be extended to an automorphism of $\Lambda_{K3}$ if and only if $r_M(g_M) \in \im(r_T)$. In our case, any automorphism $g_M \in O(M)$ can be extended to an element in $O(\Lambda_{K3})$. 

\begin{lemma} \label{g_T extension}
Let $g_M$ (respectively $g_T$) be an automorphism of $M$ (respectively $T$). If $r_M(g_M) = r_T(g_T)$, then $g_M$ can be lifted to $g \in O(\Lambda_{K3})$ with $g|_T = g_T$. The same statement holds for $g_T$.
\end{lemma}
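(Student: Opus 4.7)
The plan is to apply Nikulin's gluing theory for overlattices. By \propositionref{determine M T} together with the fact that $\Lambda_{K3}$ is unimodular and $T = M_{\Lambda_{K3}}^\perp$, both embeddings $M\hookrightarrow \Lambda_{K3}$ and $T\hookrightarrow \Lambda_{K3}$ are primitive with mutually orthogonal complements. Thus $M\perp T$ sits inside $\Lambda_{K3}$ as a sublattice of finite index, and by \cite[Proposition 1.4.1 and Proposition 1.5.1]{nikulin} the overlattice $\Lambda_{K3}/(M\perp T)$ is an isotropic subgroup $H\subset A_M\oplus A_T$ for the quadratic form $q_M\oplus q_T$. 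Because $M$ and $T$ are \emph{mutually} orthogonal primitive summands of a unimodular lattice, $H$ must be the graph of an anti-isometry
\[
\gamma: (A_M, q_M) \xrightarrow{\ \sim\ } (A_T, -q_T),
\]
and this $\gamma$ is precisely the canonical identification $O(q_M)\cong O(q_T)$ referenced in the remark following \propositionref{determine M T}.

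Next I would use the standard extension criterion (\cite[Corollary 1.5.2]{nikulin}): a pair $(g_M,g_T)\in O(M)\oplus O(T)\subset O(M\perp T)$ extends to an isometry of $\Lambda_{K3}$ if and only if the induced automorphism of $A_M\oplus A_T$ preserves the subgroup $H$. Since $H$ is the graph of $\gamma$, this preservation condition is exactly
\[
\gamma\circ r_M(g_M) = r_T(g_T)\circ \gamma,
\]
which, under the identification $O(q_M)\cong O(q_T)$ provided by $\gamma$, is literally the hypothesis $r_M(g_M)=r_T(g_T)$.

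Granting this, the proof is essentially immediate: form the isometry $g_M\oplus g_T$ of $M\perp T$, invoke the criterion to extend it to an isometry $g\in O(\Lambda_{K3})$, and observe that by construction $g|_M=g_M$ and $g|_T=g_T$, giving the lift of $g_M$ (respectively of $g_T$) with the prescribed restriction on the complementary factor. The second statement of the lemma is obtained by swapping the roles of $M$ and $T$ in the argument, which is symmetric since both are $2$-elementary lattices with isometric discriminant forms (up to sign) and both embed primitively as complementary summands in $\Lambda_{K3}$.

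The main obstacle is a bookkeeping one rather than a conceptual one: one must be careful that the identification $O(q_M)\cong O(q_T)$ implicit in the hypothesis $r_M(g_M)=r_T(g_T)$ is indeed the one coming from $\gamma$, rather than some other isomorphism. Once that is checked, the surjectivity statements of the previous lemma are not even needed here, and the lift is produced directly from Nikulin's gluing.
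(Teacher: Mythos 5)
Your proposal is correct and follows essentially the same route as the paper: both arguments view $\Lambda_{K3}$ as an overlattice of $M\perp T$ whose glue group $\Lambda_{K3}/(M\perp T)$ is the graph of the canonical anti-isometry $A_M\cong A_T$, and both observe that the hypothesis $r_M(g_M)=r_T(g_T)$ is exactly the condition for $g_M\oplus g_T$ to preserve that graph. The only cosmetic difference is that you invoke Nikulin's extension criterion (Corollary 1.5.2) abstractly, whereas the paper verifies the same preservation directly on elements $x=x_M+x_T\in\Lambda_{K3}$.
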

\begin{proof}
The proof is similar to that for \cite[Prop. 14.2.6]{huybrechts_k3}. Take any $x = x_M + x_T \in \Lambda_{K3}$ with $x_M \in M^*$ and $x_T \in T^*$. View $\Lambda_{K3}$ as an overlattice of $M \perp T$. The corresponding isotropic subgroup (cf. \cite[\S 1.4]{nikulin}) of $A_{M \perp T} \cong A_M \oplus A_T$ is $\Lambda_{K3}/(M \perp T)$. Since $x \in \Lambda_{K3}$, $\bar{x}_M + \bar{x}_T$ is contained in $\Lambda_{K3}/(M \perp T)$ (where $\bar{x}_M$ denotes the corresponding element of $x_M$ in $A_M$ and similarly for $\bar{x}_T$). Consider $g_M(x_M) + g_T(x_T) \in M^* \oplus T^*$. Note that the image of $g_M(x_M) + g_T(x_T)$ under the map $M^* \oplus T^* \rightarrow A_{M \perp T} \cong A_M \oplus A_T$ is $r_M(g_M)(\bar{x}_M) + r_T(g_T)(\bar{x}_T)$. Recall that $A_M$ and $A_T$ are identified via the natural projections $A_M \stackrel{\sim}{\leftarrow} \Lambda_{K3}/(M \perp T) \stackrel{\sim}{\rightarrow} A_T$. Because $r_M(g_M) = r_T(g_T)$, $r_M(g_M)(\bar{x}_M) + r_T(g_T)(\bar{x}_T)$ is contained in $\Lambda_{K3}/(M \perp T)$. In other words, we have $g_M(x_M) + g_T(x_T) \in \Lambda_{K3}$. 
\end{proof}

Let $\Sigma_{\alpha} \subset O(M)$ (respectively $\Sigma_{\beta} \subset O(M)$) be the subgroup which permutes $\{\alpha_1, \ldots, \alpha_4\}$ (respectively the subgroup which permutes $\{\beta_1, \ldots, \beta_4\}$). We seek automorphisms of $T$ which can be extended to automorphisms of $\Lambda_{K3}$ whose restrictions to $M$ belong to $\Sigma_\alpha$ or $\Sigma_\beta$. We observe that there is a natural inclusion $\Sigma_{\alpha} \times \Sigma_{\beta} \hookrightarrow O(M)$.
\begin{lemma} \label{permutation}
The composition $\Sigma_\alpha \times\Sigma_{\beta} \hookrightarrow O(M) \twoheadrightarrow O(q_M)$ is injective. 
\end{lemma}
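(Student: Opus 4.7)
My plan is to detect the action of $\Sigma_\alpha \times \Sigma_\beta$ on $A_M$ by looking at specific non-trivial elements of $A_M$ that the two factors act on independently. The key auxiliary classes are
$$
\bar a_{ij} \coloneqq \tfrac12(\alpha_i - \alpha_j) \in M \otimes \bQ, \qquad \bar b_{ij} \coloneqq \tfrac12(\beta_i - \beta_j) \in M \otimes \bQ,
$$
for $1 \leqslant i < j \leqslant 4$. First I would check that $\bar a_{ij}\in M^*$ by pairing it against each element of the basis $\{\gamma, l_1', \alpha_1, \alpha_2, \alpha_3, l_2', \beta_1, \beta_2, \beta_3, \xi\}$: all pairings are $0$ except $(\bar a_{ij}, \alpha_k) = -\delta_{ik} + \delta_{jk}$, using the relation $\alpha_4 = \xi - 2l_1' - \alpha_1 - \alpha_2 - \alpha_3$ and the symmetry $(\alpha_k, l_1') = 1$ for all $k = 1, 2, 3, 4$, to handle the case involving $\alpha_4$. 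The same holds for $\bar b_{ij}$ by the analogous computation.

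Next, I would show that the six classes $\{\bar a_{ij} : 1 \leqslant i < j \leqslant 4\}$ are pairwise distinct and non-zero in $A_M = M^*/M$. The potential obstruction here is that a half-integer combination of basis elements could accidentally lie in $M$; this is ruled out by direct inspection in three cases (distinct supports, one shared index, and disjoint supports $\{1,2\}$ vs.\ $\{3,4\}$), each of which leaves an uncancelled $\tfrac12 \xi$ or $\tfrac12 \alpha_k$ term after substituting the expression for $\alpha_4$. This is the most computationally involved step, though it is straightforward.

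Third, I would observe that $\Sigma_\alpha$ acts on $M$ by permuting $\{\alpha_1,\ldots,\alpha_4\}$ and fixing $\gamma, l_1', l_2', \beta_1, \beta_2, \beta_3, \xi$ (the fact that $l_1'$ is fixed follows from the relation $2l_1' = \xi - \alpha_1 - \alpha_2 - \alpha_3 - \alpha_4$, whose right-hand side is $\Sigma_\alpha$-invariant). Consequently
$$
\sigma_\alpha(\bar a_{ij}) = \bar a_{\sigma_\alpha(i)\,\sigma_\alpha(j)}, \qquad \sigma_\beta(\bar a_{ij}) = \bar a_{ij},
$$
and symmetrically for the $\bar b_{ij}$. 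So if $(\sigma_\alpha, \sigma_\beta) \in \Sigma_\alpha \times \Sigma_\beta$ lies in the kernel of the composition to $O(q_M)$, then $\sigma_\alpha$ fixes every class $\bar a_{ij} \in A_M$, which by the distinctness established above forces $\sigma_\alpha$ to fix every $2$-subset of $\{1,2,3,4\}$. Since the natural action of $S_4$ on $\binom{\{1,2,3,4\}}{2}$ is faithful (a transposition $(ij)$ moves $\{i,k\}$ to $\{j,k\}$ for any $k \notin \{i,j\}$, and any non-identity element contains such a transposition in its support), this gives $\sigma_\alpha = \id$. The same argument applied to the $\bar b_{ij}$ yields $\sigma_\beta = \id$, completing the proof.

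The main obstacle is the bookkeeping in step two, where one must carefully account for the fact that $\alpha_4$ (resp.\ $\beta_4$) is not part of the chosen basis of $M$, so that all computations in $M^*/M$ have to be reduced back to the ten basis elements before concluding whether a given half-integer combination lies in $M$.
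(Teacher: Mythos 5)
Your proof is correct, but it takes a genuinely different route from the paper's. The paper argues by explicit reconstruction: it first records the image in $O(q_M)$ of each transposition (a transposition $(\alpha_i\alpha_{i'})$ with $i,i'\leqslant 3$ simply swaps $\alpha_i^*$ and $\alpha_{i'}^*$, while $(\alpha_i\alpha_4)$ sends $\xi^*$ to $\xi^*+\alpha_i^*$ and translates the remaining $\alpha_{i''}^*$), and then shows that $(g_\alpha,g_\beta)$ can be recovered from the induced map $\bar g$ on $A_M$ by first locating $\bar g(\xi^*)$ among the elements of square $-\tfrac12$ --- which pins down the images of $\alpha_4$ and $\beta_4$ --- and then reading off the rest of the permutations from the action on suitable four-element subsets of $A_M$. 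You instead exhibit a separating family: the classes $\tfrac12(\alpha_i-\alpha_j)\in M^*$ give six pairwise distinct nonzero elements of $A_M$ on which $\Sigma_\alpha$ acts exactly as $S_4$ acts on the $2$-subsets of $\{1,2,3,4\}$ (the sign ambiguity in $\bar a_{\sigma(i)\sigma(j)}$ is harmless since $A_M$ is $2$-torsion), and faithfulness of that combinatorial action finishes the argument. I checked the key computations: $\tfrac12(\alpha_i-\alpha_j)$ does pair integrally with the whole basis of $M$ (using $(\xi,\alpha_k)=0$ and $(l_1',\alpha_k)=1$ for all $k$), and each difference $\tfrac12(\alpha_i-\alpha_j-\alpha_k+\alpha_l)$ retains a half-integral coefficient of $\xi$ or of some $\alpha_k$ after substituting $\alpha_4=\xi-2l_1'-\alpha_1-\alpha_2-\alpha_3$, so it does not lie in $M$. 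Your version is shorter and treats $\alpha_4$ on the same footing as the other $\alpha_i$; the paper's version has the side benefit of an explicit description of the image of $\Sigma_\alpha\times\Sigma_\beta$ in $O(q_M)$, which is reused later (for the extra $\bZ/2\bZ$ swap defining $\Gamma'$ and for computing orbits of isotropic elements of $A_T$ in the Baily--Borel analysis), whereas your argument yields injectivity only.
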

\begin{proof}
First let us describe the automorphisms of $A_M$ induced by the transpositions in $\Sigma_\alpha$ and $\Sigma_{\beta}$. We consider $\Sigma_\alpha$ and the case of $\Sigma_\beta$ is analogous. The image of the transposition $(\alpha_i\alpha_i')$ (with $1 \leqslant i \neq i' \leqslant 3$) defines the element $r_M((\alpha_i\alpha_i'))$ in $O(q_M)$ given by $\alpha_i^* \mapsto \alpha_{i'}^*$, $\alpha_{i'}^* \mapsto \alpha_i^*$, leaving $\gamma^*$, $\alpha_{i''}^*$ ($i'' \in \{1,2,3\} \backslash \{i, i'\}$), $\beta_j^*$ ($1 \leqslant j \leqslant 3$) and $\xi^*$ invariant. 

The automorphism of $A_M$ induced by the transposition $(\alpha_1\alpha_4)$ between $\alpha_1$ and $\alpha_4$
is given by
\begin{align*}
&&\alpha_2^* \mapsto \alpha_3^* + \gamma^* \equiv\alpha_1^* + \alpha_2^*,  &&\alpha_3^* \mapsto \alpha_2^* + \gamma^* \equiv\alpha_1^* + \alpha_3^*, &&\xi^* \mapsto \xi^* + \alpha_1^*,
\end{align*}
and $\gamma^*$, $\alpha_1^*$, $\beta_j^*$ ($1 \leqslant j \leqslant 3$) are invariant by this action. The case of transpositions $(\alpha_2\alpha_4)$ and $(\alpha_3\alpha_4)$ is analogous. As it is well-known, the transpositions generate $\Sigma_\alpha$ and $\Sigma_\beta$. It is easy to compute the image of $\Sigma_\alpha \times \Sigma_{\beta}$ in $O(q_M)$ using the previous descriptions. 

Let $g_\alpha \in \Sigma_\alpha$ and $g_\beta \in \Sigma_\beta$. Now we describe how to univocally recover $(g_\alpha, g_\beta) \in \Sigma_\alpha \times\Sigma_{\beta}$ from the induced action $\bar{g}$ on $A_M$. In particular, this will show that the composed map $\Sigma_\alpha \times\Sigma_{\beta} \rightarrow O(q_M)$ is injective. Consider $\bar{g}(\xi^*)$. Because $q_M(\xi^*) \equiv -\frac12 \in \bQ/2\bZ$, the induced action $\bar{g}$ sends $\xi^*$ to an element $v^*$ satisfying $q_M(v^*) \equiv -\frac12$. By \remarkref{q_M} such elements are $\xi^*$, $\xi^* + \alpha_i^*$, $\xi^* + \beta_j^*$ and $\xi^* + \alpha_i^*+\beta_j^*$ ($1 \leqslant i,j \leqslant 3$). 
\begin{itemize}
\item If $\bar{g}(\xi^*) = \xi^*$, then $g_\alpha$ (respectively $g_\beta$) fixes $\alpha_4$ (respectively $\beta_4$) by the description of the permutations above and $g_\alpha$ (respectively $g_\beta$) can be recovered from the action of $\bar{g}$ on the set $\{\alpha_1^*, \alpha_2^*, \alpha_3^*\}$ (respectively $\{\beta_1^*, \beta_2^*, \beta_3^*\}$). 
\item If $\bar{g}(\xi^*) = \xi^* + \alpha_i^*$ ($1 \leqslant i \leqslant 3$), then $g_\alpha$ maps $\alpha_4$ to $\alpha_i$ and $g_\beta$ fixes $\beta_4$. Then $g_\alpha$ (respectively $g_\beta$) is determined by the action of $\bar{g}$ on the set $\{\xi^*+ \alpha_1^*, \xi^*+\alpha_2^*, \xi^*+\alpha_3^*, \xi^*\}$ (respectively $\{\beta_1^*, \beta_2^*, \beta_3^*\}$). 
\item If $\bar{g}(\xi^*) = \xi^* + \beta_j^*$ ($1 \leqslant j \leqslant 3$), then $g_\beta$ maps $\beta_4$ to $\beta_j$ and $g_\alpha$ fixes $\alpha_4$. Then $g_\alpha$ (respectively $g_\beta$) is determined by the action of $\bar{g}$ on the set $\{\alpha_1^*, \alpha_2^*, \alpha_3^*\}$ (respectively $\{\xi^*+ \beta_1^*, \xi^*+\beta_2^*, \xi^*+\beta_3^*, \xi^*\}$). 
\item If $\bar{g}(\xi^*) = \xi^* + \alpha_i^* + \beta_j^*$ ($1 \leqslant i,j \leqslant 3$), then  $g_\alpha$ maps $\alpha_4$ to $\alpha_i$ and $g_\beta$ maps $\beta_4$ to $\beta_j$. Then $g_\alpha$ (respectively $g_\beta$) can be recovered by the action of $\bar{g}$ on the set $\{\xi^*+ \alpha_1^*+ \beta_j^*, \xi^*+\alpha_2^*+ \beta_j^*, \xi^*+\alpha_3^*+ \beta_j^*, \xi^*+ \beta_j^*\}$ (respectively $\{\xi^*+ \beta_1^*+\alpha_i^*, \xi^*+\beta_2^*+\alpha_i^*, \xi^*+\beta_3^*+\alpha_i^*, \xi^*+\alpha_i^*\}$). 
\end{itemize}
\end{proof}

By abuse of notation, we also use $\Sigma_\alpha \times \Sigma_\beta$ to denote its image in $O(q_T) \cong O(q_M)$. There exists a natural exact sequence $1 \rightarrow \widetilde{O}(T) \rightarrow O(T) \stackrel{r_T}{\rightarrow} O(q_T) \rightarrow 1$ which also induces $1 \rightarrow O^*(T) \rightarrow O_-(T) \rightarrow O(q_T) \rightarrow 1$. 
We define $\Gamma \subset O_{-} (T)\subset O(T)$ as the following extension: $$1 \rightarrow O^*(T) \rightarrow \Gamma \rightarrow (\Sigma_\alpha \times \Sigma_\beta) \rightarrow 1.$$ 

By \lemmaref{g_T extension} and \lemmaref{permutation}, the group $\Gamma$ fixes $M$ but may permute the elements $\alpha_1, \ldots, \alpha_4$ (respectively $\beta_1, \ldots, \beta_4$). Note that $O^*(T)$ is a normal subgroup of $\Gamma$ and $\Gamma/O^*(T) = \Sigma_\alpha \times \Sigma_\beta$. Also, $\Gamma$ and $O(T)$ are commensurable and hence $\Gamma$ is an arithmetic group. There is a natural action of $\Gamma$ on $\calD$ (see the description of $\calD$ in \eqref{eq:period-domain2}).

Recall that $\calM_0 \subset \overline{\calM}(1,1)$ is the moduli space of triples $(C, L_1, L_2)$ consisting of a quartic curve $C$ and (labeled) lines $L_1$, $L_2$ such that $C + L_1 + L_2$ has at worst simple singularities. 
\begin{proposition} \label{P birational}
The period map $\calP$ that associates to a (generic) triple $(C, L_1, L_2)$ the periods of the $K3$ surface $S_{(C,L_1,L_2)}$ defines a birational map $\calP: \calM_0 \dashrightarrow \calD/\Gamma$.
\end{proposition}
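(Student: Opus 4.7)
The plan is to prove birationality by combining a dimension count with dominance and generic injectivity of $\calP$. By \lemmaref{corollary:dimension-moduli} (applied with $n=1$, $d=4$, $k=2$) we have $\dim \calM_0 = 10$, and by \propositionref{determine M T} the transcendental lattice $T$ has signature $(2,10)$, hence $\dim \calD/\Gamma = 10$. It therefore suffices to show that $\calP$ is well-defined on a dense open subset of $\calM_0$, is dominant, and is generically injective.

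\textbf{Well-definedness.} I would restrict to the open subset $U \subset \calM_0$ where $C$ is smooth and $C+L_1+L_2$ has simple normal crossings. For any $(C,L_1,L_2) \in U$, \corollaryref{generic primitive} yields a primitive embedding $\jmath\colon M \hookrightarrow \Pic(S_{(C,L_1,L_2)})$ in which the images of $\gamma, l_1', l_2', h, \xi$ are geometrically canonical while the labelings of $\alpha_1,\dots,\alpha_4$ and $\beta_1,\dots,\beta_4$ correspond to arbitrary orderings of $C\cap L_1$ and $C\cap L_2$. Choose a marking $\Phi\colon H^2(S_{(C,L_1,L_2)},\bZ) \xrightarrow{\sim} \Lambda_{K3}$ with $\Phi\circ\jmath$ equal to the fixed embedding of \propositionref{determine M T}. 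Since $H^{2,0}$ is orthogonal to $\Pic$, $\Phi$ maps the period line into $T \otimes \bC$, giving a point of $\calD$. Two such markings differ by an element of $O(\Lambda_{K3})$ preserving $M$ setwise with restriction on $M$ in $\Sigma_\alpha \times \Sigma_\beta$, and by \lemmaref{g_T extension} together with \lemmaref{permutation} the induced action on $T$ is precisely the group $\Gamma$. Hence $\calP\colon U \to \calD/\Gamma$ is well-defined.

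\textbf{Dominance and generic injectivity.} For a very general $[\omega] \in \calD$, the surjectivity of the period map for $K3$ surfaces furnishes a marked $K3$ surface $(S,\Phi)$ with period $[\omega]$ and $\Phi^{-1}(M) = \Pic(S) \cong M$; by \propositionref{generic surjectivity} we conclude $S \cong S_{(C,L_1,L_2)}$ for some $(C,L_1,L_2) \in U$, whence $\calP$ is dominant. For generic injectivity, suppose $(C,L_1,L_2)$ and $(C',L_1',L_2')$ in $U$ have the same image in $\calD/\Gamma$. Lifting the resulting Hodge isometry $T \xrightarrow{\sim} T$ via \lemmaref{g_T extension} produces a Hodge isometry $\phi\colon H^2(S_{(C,L_1,L_2)},\bZ) \xrightarrow{\sim} H^2(S_{(C',L_1',L_2')},\bZ)$ whose restriction to $M$ lies in $\Sigma_\alpha \times \Sigma_\beta$. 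In particular $\phi$ fixes $h = \gamma + \xi$, $\gamma$, $l_1'$, and $l_2'$. After adjusting by $\pm\mathrm{id}$ and Weyl group reflections so as to send one K\"ahler cone onto the other (possible because on $U$ we have $\Pic = M$ and $h$ is a polarization), the global Torelli theorem for $K3$ surfaces yields an isomorphism $\varphi\colon S_{(C',L_1',L_2')} \xrightarrow{\sim} S_{(C,L_1,L_2)}$ realizing $\phi$. Since $\varphi$ preserves the pullback of $\calO_{\bP^2}(1)$ and the classes $l_1', l_2'$, it descends to a projective transformation of $\bP^2$ carrying $(C',L_1',L_2')$ to $(C,L_1,L_2)$.

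\textbf{Main obstacle.} The delicate part is the identification of the marking ambiguity on the period side with $\Gamma$, rather than with the more standard subgroup $O^*(T)$ used in \cite{dolgachev_MSK3}. The enlargement by $\Sigma_\alpha \times \Sigma_\beta$ reflects the fact that the triples $(C,L_1,L_2)$ only record the \emph{unordered} intersection sets $C\cap L_1$ and $C\cap L_2$. Making this rigorous is exactly what \lemmaref{permutation} (faithfulness of the permutation action on $A_M$) and \lemmaref{g_T extension} (compatible lifting of isometries from $T$ to $\Lambda_{K3}$) accomplish; without them the period map would factor through a nontrivial finite cover of $\calD/\Gamma$ and birationality would fail.
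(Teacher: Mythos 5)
Your proposal is correct and rests on exactly the same ingredients as the paper --- \corollaryref{generic primitive} for the polarization on the generic locus, \propositionref{generic surjectivity} for dominance, \lemmaref{g_T extension} and \lemmaref{permutation} for controlling the marking ambiguity, and the global Torelli theorem plus surjectivity of the period map --- but it is organized differently. The paper passes to the $(\Sigma_4\times\Sigma_4)$-cover $\widetilde{U}$ of $U$ parametrizing triples together with labelings of $C\cap L_1$ and $C\cap L_2$; on that cover the polarization $\jmath$ is unambiguous, the standard theory of lattice-polarized $K3$ surfaces gives a birational map $\widetilde{\calP}\colon \widetilde{U}\rightarrow \calD/O^*(T)$, and the statement descends by $(\Sigma_4\times\Sigma_4)$-equivariance since $\Gamma/O^*(T)=\Sigma_\alpha\times\Sigma_\beta$. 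You instead work directly with $\calD/\Gamma$ and verify well-definedness, dominance and generic injectivity by hand; your dimension count is then superfluous (a dominant, generically injective map of irreducible complex varieties is already birational), but harmless. The cover-and-descend route quotes the lattice-polarized period map as a black box, while your route makes the role of $\Gamma$ more transparent. Two points deserve more care in your write-up: (i) for a \emph{fixed} $\jmath$ the marking ambiguity is a priori all of $\widetilde{O}(T)$, and one should note that only the elements preserving the chosen component $\calD$ occur, which is why $\Gamma$ is taken inside $O_-(T)$; (ii) in the injectivity step, ``adjusting by $\pm\mathrm{id}$ and Weyl reflections'' risks destroying the property that $\phi$ fixes $h$, $l_1'$ and $l_2'$ --- the cleaner observation is that no adjustment is needed, because $\phi$ already matches up the irreducible $(-2)$-curves $\gamma, l_1', l_2', \alpha_i, \beta_j$ and the nef and big class $h$ on both sides, hence carries the K\"ahler cone to the K\"ahler cone.
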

\begin{proof}
Let $U$ be an open subset of $\calM_0$ parameterizing triples $(C, L_1, L_2)$ with $C$ smooth quartic curves and $L_1$, $L_2$ two (labeled) lines such that $C+L_1+L_2$ has simple normal crossings. Set $\Sigma_4$ to be the permutation group of $4$ elements and note that $\Sigma_\alpha \cong \Sigma_\beta \cong \Sigma_4$. Let $\widetilde{U}$ be the $(\Sigma_4 \times \Sigma_4)$-cover of $U$ that parametrizes quintuples $(C, L_1, L_2, \sigma_1,\sigma_2)$ where $\sigma_k: \{1,2,3,4\} \rightarrow C \cap L_k$ ($k=1,2$) is a labeling of the intersection points of $C \cap L_k$. Note that the monodromy group acts as the permutation group $\Sigma_4$ on the four points of intersection $C \cap L_k$. By \corollaryref{generic primitive}, $\sigma_1$ and $\sigma_2$ determine an $M$-polarization $\jmath$ of the $K3$ surface $S_{(C, L_1, L_2)}$. Therefore there is a well-defined map
$$\widetilde{\calP}: \widetilde{U} \rightarrow \calD/O^*(T).$$
By the global Torelli theorem and the surjectivity of the period map for $K3$ surfaces (see also \propositionref{generic surjectivity}), the map $\widetilde{\calP}$ is a birational morphism. The group $\Sigma_4 \times \Sigma_4$ acts naturally on both $\widetilde{U}$ and $\calD/O^*(T)$ as $\Gamma$ is an extension of $\Sigma_4 \times \Sigma_4$ and $O^*(T)$. Essentially, the actions are induced by the permutation of the labeling of the intersection points $C \cap L_k$ ($k=0,1$). It follows that $\widetilde{\calP}$ is $(\Sigma_4 \times \Sigma_4)$-equivariant and descends to the birational map $\calP: \calM_0 \dashrightarrow \calD/\Gamma$ (see also \lemmaref{g_T extension} and \lemmaref{permutation}).  
\end{proof}

\subsection{M-polarization for non-generic intersections} \label{normalized embedding}

We will show in this section that the birational map $\calP: \calM_0 \dashrightarrow \calD/\Gamma$ in \propositionref{P birational} extends to a birational morphism $\calP: \calM_0 \rightarrow \calD/\Gamma$. To do this, we need to extend the construction of $M$-polarization $\jmath: M \hookrightarrow \Pic(S_{(C, L_1,L_2)})$ to the non-generic triples $(C, L_1, L_2)$ and show that the construction fits in families. The idea is to use the normalized lattice polarization (cf. \cite[Definition 4.24]{laza_n16}) for degree $5$ pairs constructed in \cite[\S 4.2.3]{laza_n16}. A \emph{degree $d$ pair $(D, L)$} consists of a degree $d$ plane curve $D$ and a line $L \subset \bP^2$ (see \cite[Definition 2.1]{laza_n16}). Given a triple $(C, L_1, L_2)$ of a quartic curve $C$ and two different lines $L_1$ and $L_2$, one can construct a degree $5$ pair in two ways: $(C+L_2, L_1)$ or $(C+L_1, L_2)$. We follow the notation of the previous subsections, especially \notationref{lattice M}. We will determine the images of $\gamma, l_1', \alpha_1, \ldots, \alpha_4$ (respectively $\gamma, l_2', \beta_1, \ldots, \beta_4$) using the degree $5$ pair $(C+L_2, L_1)$ (respectively $(C+L_1, L_2)$). 

Let us briefly review the construction of normalized lattice polarization for degree $5$ pairs. See \cite[\S4.2.3]{laza_n16} for more details. Let $(D, L)$ be a degree $5$ pair such that $B:=D+L$ has at worst simple singularities. Let $\bar{S}_{(D,L)}$ be the normal surface obtained as the double cover of $\bP^2$ branched along $B$. Let $S_{(D,L)}$ be the minimal resolution of $\bar{S}_{(D,L)}$, called the $K3$ surface associated to $(D, L)$. The surface $S_{(D,L)}$ can also be obtained as the canonical resolution of $\bar{S}_{(D,L)}$, see \cite[\S III.7]{BPV}. Namely, there exists a commutative diagram: 
$$
\begin{CD}
S_{(D, L)}     @>>>   \bar{S}_{(D,L)}\\
@VV\pi'V        @VVV\\
S'     @>\epsilon>>  \bP^2
\end{CD}
$$
where $S'$ is obtained by an inductive process. Start with $S_{-1} = \bP^2$ and $B_{-1} = B = D+ L$. Simultaneously blow up all the singular points of $B$. Let $\epsilon_0: S_0 \rightarrow \bP^2$ be the resulting surface and set $B_0$ to be the strict transform of $B$ together with the exceptional divisors of $\epsilon_0$ reduced mod $2$. Repeat the process for $S_0$ and $B_0$ until the resulting divisor $B_N$ is smooth. Let $S' = S_N$, $B' = B_N$ and $\epsilon = \epsilon_N \circ \ldots \circ \epsilon_0$. Now take the double cover $\pi': S_{(D, L)} \rightarrow S'$ branched along the smooth locus $B'$. 

The construction of a normalized lattice polarization for degree $5$ pairs is a modification of the process of canonical resolution. We may choose a labeling of the intersection points of $D$ and $L$, which means a surjective map $\sigma: \{0,1,2,3,4\} \rightarrow D \cap L$ satisfying $|\sigma^{-1}(p)| = \mathrm{mult}_p(D \cap L)$ for every $p \in D \cap L$. As argued in \cite[Proposition 4.25]{laza_n16}, $L$ is blown-up exactly $5$ times in the desingularization process described above. The blown-up points can be chosen as the first five steps of the sequence of blow-ups: $S' \rightarrow \ldots S_0 \rightarrow \bP^2$, and the labeling determines the order of these first $5$ blow-ups. Let $\{p_k\}_{k=0}^4$ be the centers of these blow-ups and $E_k$ be the exceptional divisors. Note that $p_k \in S_{k-1}$ (the image of $p_k$ under the contraction to $\bP^2$ is a point of intersection $D \cap L$) and $E_k$ is a divisor on $S_k$ for $k = 0, \ldots, 4$. We define the following divisors $$D_k = (\pi'^*\circ \epsilon_N^* \circ \ldots \epsilon_{k+1}^*)(E_k)$$ for $0 \leqslant k \leqslant 4$. The divisor $D_k$ on $S_{(D,L)}$ is Artin's fundamental cycle (see for example \cite[p. 76]{BPV}) associated to the simple singularity of the curve $B_{k-1}$ at the point $p_k$. 
 
The procedure described above produces $5$ divisors: $D_0, \ldots, D_4$. One can also consider the strict transform of $L$ in $S'$ and take its preimage in $S_{(D,L)}$. This is a smooth rational curve on the $K3$ surface and we will denote its corresponding class by $L'$. We summarize the properties of these $6$ divisors $L', D_0, \ldots, D_4$ in the following result. Given families of curves $(C,L)$, we can carry out a simultaneous resolution in families. As a result the construction above fits well in families, see \cite[p. 2141]{laza_n16}.
\begin{lemma} \label{normalized polarization}
For a pair $(D,L)$  and the surface $S_{(D,L)}$ as described above the following statements hold:
\begin{enumerate}
\item the polarization class of $S_{(D,L)}$ is $(\epsilon \circ \pi')^*\calO_{\bP^2}(1) = 2L' + D_0 + \ldots + D_4$, and
\item their intersections are $(L',L')=-2$, $(D_k,D_{k'}) = -2\delta_{kk'}$, $(L', D_k) = 1$ for $0 \leqslant k,k' \leqslant 4$. 
\end{enumerate}
\end{lemma}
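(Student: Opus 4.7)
The plan is to establish the intersection relations in part~(2) first, and then deduce the polarization-class formula in part~(1) by a pushforward plus negative-definiteness argument. Throughout, set $\pi := \epsilon\circ\pi'\colon S_{(D,L)}\to\bP^2$, and use two key inputs from the construction: (a) since $L$ is a component of $B$ and $B$ has only simple singularities, the strict transform $\tilde L_{S'}\subset S'$ lies in the branch locus $B'$ of $\pi'$, so $\pi'^{*}\tilde L_{S'}=2L'$ and $\pi_*L'=L$; (b) each $D_k$ is $\pi$-exceptional by construction, so $\pi_*D_k=0$.

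For part~(2), the self-intersection $(L')^2=-2$ follows from adjunction, since $L'$ is a smooth rational curve on the K3 surface $S_{(D,L)}$. The self-intersection $(D_k)^2=-2$ is a standard feature of Artin's fundamental cycle of a rational double point: the minimal resolution is crepant, so $K_{S_{(D,L)}}\cdot D_k=0$, and $p_a(D_k)=0$, so the arithmetic genus formula gives $(D_k)^2=-2$. For the mixed intersection, write $D_k=\pi'^{*}\tilde E_k^{S'}$ with $\tilde E_k^{S'}:=(\epsilon_N\circ\cdots\circ\epsilon_{k+1})^{*}E_k$ and apply the projection formula twice:
\[
(L'\cdot D_k) \;=\; (\pi'_*L'\cdot \tilde E_k^{S'})_{S'} \;=\; (\tilde L_{S'}\cdot \tilde E_k^{S'})_{S'} \;=\; (\tilde L_{S_k}\cdot E_k)_{S_k} \;=\; 1,
\]
the last equality because $p_k$ is a smooth point of the strict transform $\tilde L_{S_{k-1}}$. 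An analogous reduction through the blow-up tower gives $(D_k\cdot D_{k'})=0$ for $k\neq k'$: the computation reduces to an intersection on some intermediate $S_j$ between $E_j$ and the proper transform of an exceptional divisor from an earlier (or unrelated) blow-up, and these are locally disjoint by the choice of centers.

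For part~(1), set $R := \pi^{*}[L]-2L'-D_0-\cdots-D_4$. Pushing forward, $\pi_*R = 2L - 2L - 0 = 0$, so $R$ is supported on $\pi$-exceptional curves; more precisely $R$ is supported above $L\cap\mathrm{Sing}(B)$, since both $\pi^{*}[L]$ and $2L'+\sum_k D_k$ have no contribution above singular points of $B$ outside $L$. By part~(2) we have $R\cdot L' = R\cdot D_k = 0$ for every $k$. The exceptional locus above each $q\in L\cap\mathrm{Sing}(B)$ is an ADE configuration of $(-2)$-curves whose intersection form is negative definite (the ADE Cartan matrix, up to sign). Combined with the constraints above and the explicit way the fundamental cycles $D_k$ (for $p_k$ above $q$) together with the strict transform $L'$ meet the configuration, the resulting linear system forces all coefficients of $R$ on that block to vanish, so $R=0$.

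The main obstacle is the bookkeeping in the last step: one must verify that, for each $q\in L\cap\mathrm{Sing}(B)$, the classes $\{L',\, D_k : p_k\ \text{lies above}\ q\}$ restricted to the exceptional block above $q$ produce enough independent orthogonality conditions to pin down every coefficient of $R$. This rests on identifying precisely which irreducible $(-2)$-curves appear in each fundamental cycle $D_k$ (namely, the cycle at the $k$-th blow-up of the normalized resolution of the ADE singularity at $q$) and is precisely the content of the normalized-lattice-polarization construction carried out in \cite[\S4.2.3]{laza_n16}, to which we appeal.
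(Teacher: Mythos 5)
The paper's own ``proof'' of this lemma is a one-line citation to \cite[Proposition 4.25]{laza_n16}, so your self-contained reconstruction is necessarily a different (and more informative) route; the computations you give are essentially the ones that citation hides. Part~(2) is correct as written: the projection-formula reduction through the blow-up tower does give $(L',D_k)=1$ and $(D_k,D_{k'})=0$ for $k\neq k'$, and both self-intersection derivations are sound --- though you can avoid invoking the fundamental-cycle identification entirely, since $(D_k)^2=(\pi'^*\tilde E_k)^2=2(\tilde E_k)^2=2(E_k)^2=-2$, total transforms preserving self-intersection. The only real issue is that you flag the last step of part~(1) as an obstacle and punt it back to \cite[\S 4.2.3]{laza_n16}, when your own setup already closes it: having shown $\pi_*R=0$ and $R\cdot L'=R\cdot D_k=0$, you get
$$R^2=R\cdot\pi^*[L]-2\,R\cdot L'-\textstyle\sum_k R\cdot D_k=\big([L]\cdot\pi_*R\big)=0,$$
and since $R$ lies in the span of the $\pi$-exceptional curves --- a direct sum of negated ADE Cartan lattices, hence negative definite --- it follows that $R=0$ with no per-block bookkeeping whatsoever. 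Even more directly, part~(1) is just the total-transform formula: $L$ is smooth and, by the construction quoted from \cite[Proposition 4.25]{laza_n16}, the only centers of the tower lying on (strict transforms of) $L$ are $p_0,\dots,p_4$, each of multiplicity one on $L$, so $\epsilon^*L=\tilde L_{S'}+\sum_{k}\tilde E_k$; applying $\pi'^*$, with $\pi'^*\tilde L_{S'}=2L'$ (as $\tilde L_{S'}$ lies in the branch divisor) and $\pi'^*\tilde E_k=D_k$ by definition, yields $2L'+D_0+\cdots+D_4$ at once. So your argument is correct, but the step you describe as the main obstacle is not one.
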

\begin{proof}
See the proof of \cite[Proposition 4.25]{laza_n16}.
\end{proof}

Let us consider triples $(C, L_1, L_2)$ consisting of a quartic curve $C$ and lines $L_1$, $L_2$ such that $C+L_1+L_2$ has worst simple singularities. Let $\bar{S}_{(C,L_1,L_2)}$ be the double plane branched along $C+L_1+L_2$ and $S_{(C,L_1,L_2)}$ be the $K3$ surface obtained by taking the minimal resolution of $\bar{S}_{(C,L_1,L_2)}$. Let $\pi\colon S_{(C,L_1,L_2)} \rightarrow \bP^2$ be the natural morphism. To define a lattice embedding $\jmath: M\hookrightarrow \Pic(S_{(C,L_1,L_2)})$, one needs to specify the images of $\gamma, l_1', \alpha_1, \ldots, \alpha_4, l_2', \beta_1, \ldots, \beta_4$ so that the intersection form is preserved. There is a compatibility condition induced by $2l_1' + \alpha_1 + \ldots + \alpha_4 = 2l_2' + \beta_1 + \ldots + \beta_4$. Recall that $h=\xi + \gamma = 2l_1' + \alpha_1 + \ldots + \alpha_4 +\gamma = 2l_2' + \beta_1 + \ldots + \beta_4 + \gamma$. We also require that $\jmath(h)$ is the class of the base point free polarization $\pi^*\calO_{\bP^2}(1)$.   

Given a triple $(C, L_1, L_2)$, one has two associated degree $5$ pairs: $(C+L_2, L_1)$ and $(C+L_1, L_2)$, which induce the same K3 surface $S_{(D,L)}$, constructed as above. Let us fix two labellings $\sigma_1: \{0,1,2,3,4\} \rightarrow C \cap L_1$ and $\sigma_2: \{0,1,2,3,4\} \rightarrow C \cap L_2$ such that $\sigma_1(0) = \sigma_2(0) = L_1 \cap L_2$. Every degree $5$ pair produces $6$ divisors as described above. For the pair $(C+L_2, L_1)$ (respectively $(C+L_1, L_2)$), we denote the $6$ divisors by $L_1', R_0, \ldots, R_4$ (respectively $L_2', T_0, \ldots, T_4$). Note that $\pi^*\calO_{\bP^2}(1) = 2L_1' + R_0 + \ldots + R_4 = 2L_2' + T_0 + \ldots + T_4$. We define $\jmath: M\hookrightarrow \Pic(S_{(C, L_1, L_2)})$ as follows:
\begin{itemize}
\item $\jmath(\gamma) = R_0 = T_0$ (by our choice of the labellings, both $R_0$ and $T_0$ are the fundamental cycle associated to the singularity of $C+L_1+L_2$ at the point $L_1 \cap L_2$);
\item $\jmath(l_1') = L_1'$ and $\jmath(l_2') = L_2'$;
\item $\jmath(\alpha_i) = R_i$ and $\jmath(\beta_j) = T_j$ for $1 \leqslant i,j \leqslant 4$.
\end{itemize}

After the first blow-up $\epsilon_0$, the strict transforms of the two lines $L_1$ and $L_2$ are disjoint and hence, we have $(R_i, T_j) = 0$ for $1 \leqslant i,j \leqslant 4$. Using \lemmaref{normalized polarization}, it is straightforward to verify that $\jmath$ is a well-defined lattice embedding. The embedding $\jmath$ also satisfies the following geometric properties and fits well in families (cf. \cite[\S4.2.3]{laza_n16}, especially the last paragraph on page $2141$). 
\begin{enumerate}
\item $\jmath(h)$ is the class of the base point free polarization $\pi^*\calO_{\bP^2}(1)$.
\item $\jmath(l_1')$ and $\jmath(l_2')$ are the classes of irreducible rational curves.
\item $\jmath(\gamma), \jmath(\alpha_1), \ldots, \jmath(\alpha_4)$ (respectively $\jmath(\gamma), \jmath(\beta_1), \ldots, \jmath(\beta_4)$) are classes of effective divisors which are contracted by $\pi$ to the points of the intersection $C \cap L_1$ (respectively $C \cap L_2$). In particular, $\jmath(\gamma)$ is contracted to the point $L_1 \cap L_2$.
\end{enumerate}

To conclude that $\jmath$ is an $M$-polarization we also need the following lemma.
\begin{lemma}
The lattice embedding $\jmath: M \hookrightarrow \Pic(S_{(C, L_1, L_2)})$ is primitive. 
\end{lemma}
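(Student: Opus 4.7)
My plan is to invoke \lemmaref{primitive}, which reduces the primitivity of $\jmath$ to checking that no class of the form $\jmath(\xi)$, $\jmath(\alpha_i+\alpha_{i'}+\beta_j+\beta_{j'})$, or $\jmath(\alpha_i+\alpha_{i'}+\beta_j+\beta_{j'}+\xi)$ with $1\leqslant i,i',j,j'\leqslant 3$ is divisible by $2$ in $\Pic(S_{(C,L_1,L_2)})$. The strategy follows the proof of \propositionref{primitive_irreducible} closely, but must be adapted because in the non-generic situation the classes $\jmath(\alpha_i)$ and $\jmath(\beta_j)$ need not correspond to irreducible curves; they are only guaranteed to be effective $\pi$-exceptional divisors (Artin's fundamental cycles), by property~(3) of the normalized embedding constructed above.

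The class $\jmath(\xi)$ is the easy case: if $\jmath(\xi)=2x$, then $(x,x)=0$ and $(x,\jmath(h))=1$, contradicting the base point freeness of $\jmath(h)$ (property~(1)) via \theoremref{mayer}. For the other two cases I would mimic Cases~2 and~3 of \propositionref{primitive_irreducible}. Suppose, for instance, that $\jmath(\alpha_i+\alpha_{i'}+\beta_j+\beta_{j'})=2y$; then $(y,y)=-2$ and $(y,\jmath(h))=0$. Since $\jmath(l_1')$ is irreducible (property~(2)), $\jmath(h)$ is nef, and $(\jmath(h),\jmath(l_1'))=1$, the auxiliary class $2\jmath(h)+\jmath(l_1')$ is nef; pairing with $y$ gives $(2\jmath(h)+\jmath(l_1'),y)=1$, which forces $y$ itself to be effective. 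Since $(y,\jmath(h))=0$ and $\jmath(h)$ is nef and big, $y$ is $\pi$-exceptional, hence supported on the union of the exceptional configurations over the simple singularities of $C+L_1+L_2$. A contradiction is then obtained by decomposing $y$ component-by-component and inspecting the intersection numbers $(y,\jmath(l_1'))=(y,\jmath(l_2'))=1$ against the constraints $(y,\jmath(\alpha_i))<0$ at the prescribed indices, exactly as in the generic proof. The third case, $\jmath(\alpha_i+\alpha_{i'}+\beta_j+\beta_{j'}+\xi)=2z$, is handled by an entirely parallel argument with the auxiliary nef class $2\jmath(h)+\jmath(l_1')$ again producing the relevant positivity.

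The main obstacle I foresee is the bookkeeping when the labellings $\sigma_1$, $\sigma_2$ send several indices to the same point of $C\cap L_k$; this happens precisely when $L_k$ is tangent to $C$ or passes through a singular point of $C$. In such situations, several of the $\jmath(\alpha_i)$'s (or $\jmath(\beta_j)$'s) live on the same connected component of the exceptional locus of $\pi$, and the supports of $y$ and of the $\jmath(\alpha_i)$'s are no longer concentrated on distinct components. The decomposition argument must then be carried out ADE-type by ADE-type: over each simple singularity of $C+L_1+L_2$ one analyses whether there can exist an effective $(-2)$-divisor supported there whose pairings with the relevant $\jmath(\alpha_i)$, $\jmath(\beta_j)$ and with $\jmath(l_1')$, $\jmath(l_2')$ match those required for $y$. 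Because $\jmath(l_1')$ and $\jmath(l_2')$ are irreducible and meet the exceptional locus along specific components (the strict transforms of $L_1$, $L_2$ in the canonical resolution), these intersection constraints are rigid enough to rule out $y$ in each ADE configuration, completing the verification.
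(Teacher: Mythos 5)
Your proposal follows essentially the same route as the paper: the paper's proof is precisely to check the divisibility conditions of \lemmaref{primitive} by a case-by-case analysis modelled on the proof of \propositionref{primitive_irreducible}, which is what you do. Your additional discussion of the non-generic configurations (where the $\jmath(\alpha_i)$, $\jmath(\beta_j)$ are Artin fundamental cycles rather than irreducible curves, so the support argument must be run ADE type by ADE type) correctly identifies the content hidden in the paper's phrase ``case by case analysis.''
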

\begin{proof}
This follows from a case by case analysis. Specifically, we check the conditions of \lemmaref{primitive} as in the proof of \propositionref{primitive_irreducible}. 
\end{proof}

\begin{proposition} \label{P injective}
The birational map in \propositionref{P birational} extends to a morphism $\calP: \calM_0 \rightarrow \calD/\Gamma$. Moreover, the map $\calP$ is injective. 
\end{proposition}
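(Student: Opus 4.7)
The plan has two parts: extending the birational map $\calP$ of \propositionref{P birational} to a morphism on all of $\calM_0$, and proving its injectivity. For the extension, the key ingredient is already in place: \subsectionref{normalized embedding} constructs a primitive $M$-polarization $\jmath: M \hookrightarrow \Pic(S_{(C,L_1,L_2)})$ for every triple $(C,L_1,L_2) \in \calM_0$, depending on labelings $\sigma_1, \sigma_2$ of the intersection points of $C$ with each line (with $\sigma_1(0) = \sigma_2(0) = L_1 \cap L_2$). Any two choices of labelings differ by an element of $\Sigma_\alpha \times \Sigma_\beta$, which by \lemmaref{g_T extension} and \lemmaref{permutation} lifts to $\Gamma$; hence the induced point in $\calD/\Gamma$ is canonical. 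The simultaneous-resolution-in-families technique recalled before \lemmaref{normalized polarization} (cf.\ \cite[p.~2141]{laza_n16}) guarantees that the construction varies algebraically, so one obtains a morphism $\calP: \calM_0 \to \calD/\Gamma$ which agrees with the birational map of \propositionref{P birational} on the generic locus.

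For injectivity, suppose two triples $(C, L_1, L_2)$ and $(C', L_1', L_2')$ in $\calM_0$ have the same image under $\calP$. Set $S := S_{(C,L_1,L_2)}$ and $S' := S_{(C',L_1',L_2')}$ with $M$-polarizations $\jmath, \jmath'$. Then there is $g_T \in \Gamma$ carrying the period of $S$ to that of $S'$, with associated $g_M \in \Sigma_\alpha \times \Sigma_\beta \subset O(M)$. I would then replace $\jmath'$ by $\jmath'' := \jmath' \circ g_M$, a new $M$-polarization of $S'$ whose period agrees with that of $S$ in $\calD$ itself (without modding out by $\Gamma$). The global Torelli theorem for $M$-polarized $K3$ surfaces with a base-point-free polarization (cf.\ \cite{dolgachev_MSK3} and the adaptation in \cite[\S 4.3]{laza_n16}) then yields an isomorphism $f: S' \to S$ with $f^* \circ \jmath = \jmath''$.

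Finally, I would descend $f$ to $\bP^2$. Since every element of $\Sigma_\alpha \times \Sigma_\beta$ fixes $h, l_1', l_2' \in M$ (it only permutes the $\alpha_i$'s and the $\beta_j$'s), we have $f^* \jmath(h) = \jmath'(h)$ and $f^* \jmath(l_k') = \jmath'(l_k')$ for $k = 1, 2$. Hence $f$ intertwines the degree-two polarization morphisms to $\bP^2$ and induces an automorphism $\bar f \in \PGL_3$ sending $C' + L_1' + L_2'$ to $C + L_1 + L_2$, each $L_k'$ to $L_k$, and therefore also $C'$ to $C$. This gives the desired projective equivalence. The delicate point I expect to require real care is the Torelli step itself: since the polarization $h$ is nef-and-big but orthogonal inside $M$ to the effective $(-2)$-classes $\gamma, \alpha_i, \beta_j$, one needs the boundary version of $M$-polarized Torelli rather than the strictly pseudo-ample one, and one must verify that the reflections used to make the Hodge isometry effective can be confined to classes that do not disturb the chosen $M$-polarization; this parallels the treatment in \cite[\S 4.3]{laza_n16}.
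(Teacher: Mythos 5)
Your proposal is correct and follows essentially the same route as the paper: extend via the normalized $M$-polarization of \subsectionref{normalized embedding} (whose dependence on the labelings is absorbed by $\Gamma/O^*(T)=\Sigma_\alpha\times\Sigma_\beta$), then deduce injectivity from the global Torelli theorem together with the fact that $h$, $l_1'$, $l_2'$ are fixed by $\Gamma$, so the period recovers the double-cover structure and the two labeled line components, hence the triple. Your explicit flagging of the nef-and-big (rather than ample) Torelli subtlety is a point the paper's own proof passes over quickly and defers to the argument of \theoremref{P isomorphic}, but it does not change the approach.
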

\begin{proof}
Given a triple $(C, L_1, L_2)$ consisting of a quartic curve $C$ and lines $L_1$, $L_2$ such that $C + L_1 + L_2$ has at worst simple singularities, we consider the $M$-polarized $K3$ surface $(S_{(C, L_1, L_2)}, \jmath)$ where $S_{(C,L_1,L_2)}$ is the $K3$ surface obtained by taking the minimal resolution of the double plane branched along $C + L_1 +L_2$ and $\jmath$ is the lattice polarization constructed above. By \cite{dolgachev_MSK3}, the $M$-polarized $K3$ surface $(S_{(C, L_1, L_2)}, \jmath)$ corresponds to a point in $\calD/O^*(T)$. The polarization $\jmath$ depends only on $(C, L_1, L_2)$, the ordering of $C \cap L_1$ and the ordering of $C \cap L_2$, and it is compatible with the action of $\Gamma/O^*(T) = \Sigma_4 \times \Sigma_4$. Consequently, we can associate to every triple $(C, L_1, L_2)$ a point in $\calD/\Gamma$. In other words, we have a well-defined morphism $\calP: \calM_0 \rightarrow \calD/\Gamma$ extending the birational map in \propositionref{P birational}. 

Choose a point $\omega \in \calD/\Gamma$ (more precisely, $\omega$ is a $\Gamma$-orbit) which corresponds to an $M$-polarization $K3$ surface $S_{(C, L_1, L_2)}$. \lemmaref{g_T extension} allows us to extend an element of $\Gamma$ to an isometry of the $K3$ lattice $\Lambda_{K3}$. The global Torelli theorem for $K3$ surfaces implies that the period $\omega$ determines the isomorphism class of the $K3$ surface $S_{(C + L_1 + L_2)}$. By our construction the classes $h$, $l_1'$ and $l_2'$ are fixed by $\Gamma$. It follows that the period point $\omega$ uniquely expresses the $K3$ surface as a double cover of $\bP^2$ and determines two line components of the branched locus. Now we conclude that $\omega$ determines uniquely the triple $(C, L_1, L_2)$. 
\end{proof}

\subsection{Surjectivity of the period map} \label{sec:surjectivity}
We will show in this section that the period map $\calP: \calM_0 \rightarrow \calD/\Gamma$ is surjective. Given the general result of surjectivity of period maps for (lattice) polarized $K3$, one has to establish that any $K3$ surface carrying an $M$-polarization is of type $S_{(C, L_1, L_2)}$. 

\begin{proposition} \label{P surjective}
Let $S$ be a $K3$ surface such that there exists a primitive embedding $\jmath: M \hookrightarrow \Pic(S)$. Then there exists a plane quartic curve $C$ and two different lines $L_1, L_2 \subset \bP^2$ such that $S \cong S_{(C, L_1, L_2)}$ and $C+L_1+L_2$ has at worst simple singularities.
\end{proposition}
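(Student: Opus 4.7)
The plan is to generalize \propositionref{generic surjectivity} (which treats the case $\Pic(S) = M$) to allow $\Pic(S) \supsetneq M$, keeping the same four-step structure: normalize $\jmath$ so that $\jmath(h)$ is nef and the remaining generator images are effective; show $\jmath(h)$ is base point free; identify the resulting degree-two morphism $\pi\colon S \to \mathbb{P}^2$; and show that its branch divisor decomposes as $C + L_1 + L_2$ with at worst simple singularities.

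For the normalization step, after acting by $\pm W(S)$ on $\Pic(S)$ we may assume $\jmath(h)$ is nef. Then $\jmath(l_1')$ and $\jmath(l_2')$ are effective, since they are $(-2)$-classes satisfying $(\jmath(h), \jmath(l_i')) = 1 > 0$ (the non-effective option $-\jmath(l_i')$ would violate nefness of $\jmath(h)$). Applying further reflections $s_\delta$ in roots $\delta$ orthogonal to $\jmath(h)$ (which fix $\jmath(h)$), we arrange that $\jmath(\gamma)$ and each $\jmath(\alpha_i), \jmath(\beta_j)$ is effective as well.

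The central step is to prove that $|\jmath(h)|$ is base point free. By \theoremref{mayer}, this reduces to ruling out the existence of $D \in \Pic(S)$ with $D^2 = 0$ and $\jmath(h) \cdot D = 1$. When $\Pic(S) = M$, the proof of \propositionref{generic surjectivity} disposes of this by transferring the putative $D$ to a generic $M$-polarized K3 surface $S_{(Q,L,L')}$, where base point freeness is already known. In the present situation I would decompose $D = D_M + D'$ over $\mathbb{Q}$, with $D_M \in M \otimes \mathbb{Q}$ and $D' \in (M^{\perp}_{\Pic(S)}) \otimes \mathbb{Q}$. Since $\jmath(h) \in M$ and $M^{\perp}_{\Pic(S)}$ is negative definite, this gives $\jmath(h) \cdot D_M = 1$ and $D_M^2 = -(D')^2 \geq 0$. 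Controlling rational classes of this shape in $M$ (using the hyperbolic structure of $M$ together with the effectivity constraints on the generator images) should then yield a contradiction.

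Once $\jmath(h)$ is shown to be base point free, the remainder proceeds as in the proof of \propositionref{generic surjectivity}. The class $h' := 3\jmath(h) + \jmath(l_1') + \jmath(l_2')$ is ample, so each $\jmath(l_i')$ is represented by an irreducible $(-2)$-curve whose image $L_i := \pi(\jmath(l_i'))$ is a line; Bezout's theorem then forces $L_1$ and $L_2$ to be components of the branch sextic $B$. The residual component $C = B - L_1 - L_2$ is a plane quartic, and the effective $(-2)$-curves $\jmath(\gamma), \jmath(\alpha_i), \jmath(\beta_j)$ are contracted by $\pi$ onto the singular points of $C + L_1 + L_2$, so these singularities are at worst simple (as they admit a simultaneous resolution by the K3 surface $S$). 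The main obstacle is the base-point-freeness step, because once $\Pic(S) \supsetneq M$ the clean transfer to the generic case is unavailable and a finer analysis of the orthogonal decomposition of $\Pic(S)$ with respect to $M$ is required.
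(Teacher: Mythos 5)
Your overall plan---redo the argument of \propositionref{generic surjectivity} for an arbitrary primitively $M$-polarized $S$---is quite different from what the paper actually does. The paper sidesteps the whole analysis: it observes that $M$ contains the two sublattices $M_1=\langle l_1',\gamma,\alpha_1,\ldots,\alpha_4\rangle$ and $M_2=\langle l_2',\gamma,\beta_1,\ldots,\beta_4\rangle$, each isomorphic to Laza's lattice for degree $5$ pairs, and quotes \cite[Proposition 4.31]{laza_n16} twice to produce two degree $5$ pairs $(D_1,L_1)$ and $(D_2,L_2)$ with the same branch sextic; since both lines lie in the branch locus of the single morphism defined by $\jmath(h)$, the residual curve is the desired quartic. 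All of the hard work (base point freeness, identifying the lines inside the branch divisor, the ADE condition) is thereby outsourced to \cite{laza_n16}.

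Measured on its own terms, your proposal has a genuine gap exactly at the step you flag as ``the main obstacle'', and the reduction as you state it does not terminate in a contradiction: the projection $D_M$ of $D$ to $M\otimes\bQ$ lies in $M^*$, and $M^*$ \emph{does} contain classes $x$ with $x^2=0$ and $(h,x)=1$---for instance $x=\tfrac12\xi$ (the last row of $G_M$ shows $\tfrac12\xi\in M^*$, and this is precisely the class appearing in Case 1 of the proof of \propositionref{primitive_irreducible}). So ``controlling rational classes of this shape'' cannot end with ``no such class exists''. To close the argument one must (i) write $D_M=\tfrac12 h+y$ with $y\perp h$, use negative definiteness of $h^{\perp}_M$ and the fact that norms of elements of $M^*$ lie in $\tfrac12\bZ$ to force $y^2\in\{0,-\tfrac12\}$, and discard $y=0$ because $\tfrac12 h\notin M^*$, so that $D_M^2=0$; (ii) conclude $D'=0$ by negative definiteness, hence $D=D_M\in M^*\cap\Pic(S)$; and (iii) split into $D\in M$ (excluded by the transfer to a generic $S_{(Q,L,L')}$ as in \propositionref{generic surjectivity}) and $D\in M^*\setminus M$ (excluded by primitivity of $\jmath$, since such a $D$ would enlarge the saturation of $M$). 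None of this is in your sketch. There is also a second, unflagged gap: you assert that $h'=3\jmath(h)+\jmath(l_1')+\jmath(l_2')$ is ample, but the paper's verification of this in \propositionref{generic surjectivity} enumerates $\Delta^+(S)$ in terms of the basis of $M$ and therefore uses $\Pic(S)=M$ essentially; once $M\subsetneq\Pic(S)$ there are roots outside $M$ and that computation does not apply, so the irreducibility of $\jmath(l_i')$ and the Bezout step need a different justification (e.g.\ via the projection formula: an effective representative of $l_i'$ has a unique non-contracted component, which maps birationally onto a line and hence forces that line into the branch locus).
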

\begin{proof}
We apply \cite[Proposition 4.31]{laza_n16} (see also \cite[Lemmas 4.27, 4.28, 4.30]{laza_n16}). The idea is to consider the primitive sublattices $M_1$ and $M_2$ of $M$ generated by $l_1', \gamma, \alpha_1, \ldots, \alpha_4$ and $l_2', \gamma, \beta_1, \ldots, \beta_4$ respectively. Both of the sublattices have the following Gram matrix
$$
 \left( {\begin{array}{cccccc}
   -2 & 1 & 1 & 1 & 1 & 1   \\
   1 & -2 & 0 & 0 & 0 & 0   \\
   1 & 0 & -2 & 0 & 0 & 0   \\
   1 & 0 & 0 & -2 & 0 & 0   \\
   1 & 0 & 0 & 0 & -2 & 0   \\
   1 & 0 & 0 & 0 & 0 & -2   \\
   \end{array} } \right)_.
$$
Hence they are isomorphic to the lattice considered in \cite[Notation 4.11]{laza_n16} for degree $5$ pairs.  In particular, $S$ is both $M_1$-polarized and $M_2$-polarized. Indeed, recall that
\begin{equation}
h = 2l_1' + \alpha_1 + \ldots + \alpha_4 + \gamma = 2l_2' + \beta_1 + \ldots + \beta_4 + \gamma,
\label{eq:expression_h}
\end{equation}
which coincides with Laza's lattice (for both $M_1$ and $M_2$). Using \cite[Proposition 4.31]{laza_n16}, we find two degree $5$ pairs $(D_1, L_1)$ and $(D_2, L_2)$ (where, \emph{a priori}, $D_1$ and $D_2$ may be irreducible) such that $D_1 + L_1 = D_2 + L_2$ has at worst simple singularities. The two morphisms $S \rightarrow \bP^2$ associated to each degree $5$ pair are both defined by $\jmath(h)$, and hence they are the same. Because $L_1$ and $L_2$ are both contained in the branch locus of the map $S \rightarrow \bP^2$, $D_1=L_2+C$ and $D_2=L_1+C$, where $C$ is a quartic plane curve such that $C+L_1+L_2$ has at worst simple singularities. 
\end{proof}

\begin{theorem} \label{P isomorphic}
Consider the triples $(C, L_1, L_2)$ consisting of a quartic curve $C$ and lines $L_1$, $L_2$ such that $C + L_1 + L_2$ has at worst simple singularities. Let $S_{(C,L_1,L_2)}$ be the $K3$ surface obtained by taking the minimal resolution of the double plane branched along $C + L_1 +L_2$. The birational map sending $(C, L_1, L_2)$ to the periods of $S_{(C,L_1,L_2)}$ in \propositionref{P birational} extends to an isomorphism $\calP: \calM_0 \rightarrow \calD/\Gamma$.
\end{theorem}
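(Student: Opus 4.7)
The plan is to combine the three preceding propositions and then invoke Zariski's Main Theorem to upgrade bijectivity to isomorphism. By \propositionref{P birational}, the period map is birational; by \propositionref{P injective}, it extends to a morphism $\calP: \calM_0 \to \calD/\Gamma$ that is injective on closed points; and by \propositionref{P surjective}, every point of $\calD/\Gamma$, interpreted as an isomorphism class of $M$-polarized $K3$ surfaces, arises as $S_{(C, L_1, L_2)}$ for some admissible triple. Taken together, these yield that $\calP$ is a birational, bijective morphism of irreducible quasi-projective varieties.

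Next, I would record that both source and target are normal. The source $\calM_0$ is an open subvariety of the GIT quotient $\overline{\calM}(1,1)$, which is normal because the parameter space $\calR$ is smooth and GIT quotients of normal varieties by reductive groups are normal. The target $\calD/\Gamma$ is normal since $\calD$ is a Hermitian symmetric domain of type IV (hence smooth) and $\Gamma$ acts properly discontinuously with finite stabilizers, so the quotient has at worst finite-quotient singularities, which are normal.

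Finally, I would apply Zariski's Main Theorem in the following form: a birational, quasi-finite morphism between irreducible normal varieties factors as an open immersion followed by a finite morphism. Since $\calP$ is bijective on closed points it is quasi-finite, so this factorization applies; combining with the surjectivity of $\calP$, the finite morphism in the factorization is birational and surjective onto the normal target $\calD/\Gamma$, hence is an isomorphism. Consequently $\calP$ itself is an isomorphism.

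The main obstacle to the theorem is not in this concluding formal step but in the two previous propositions that feed it: constructing a coherent $M$-polarization for the non-generic triples (so that $\calP$ extends to all of $\calM_0$), which was accomplished in \subsectionref{normalized embedding} by importing Laza's normalized polarizations for degree-$5$ pairs applied to both $(C+L_2, L_1)$ and $(C+L_1, L_2)$; and verifying in \propositionref{P surjective} that every abstract $M$-polarized $K3$ surface arises geometrically as $S_{(C,L_1,L_2)}$, which in turn rests on the sublattice decomposition of $M$ into two copies of the degree-$5$ pair lattice sharing the polarization class $h$.
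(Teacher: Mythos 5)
Your overall architecture matches the paper's: reduce to showing $\calP$ is a surjective, injective, birational morphism between normal varieties and conclude by Zariski's Main Theorem. The normality observations and the final formal step are fine. However, there is a genuine gap in how you obtain surjectivity. \propositionref{P surjective} only identifies the \emph{underlying surface}: given a period point $\omega$ and the $M$-polarized $K3$ surface $(S,\jmath)$ produced by the surjectivity of the period map for lattice-polarized $K3$s, it tells you $S \cong S_{(C,L_1,L_2)}$ for some admissible triple. That alone does not show $\calP(C,L_1,L_2)=\omega$. The point $\calP(C,L_1,L_2)$ is the class of the pair $(S_{(C,L_1,L_2)},\jmath_{\mathrm{geom}})$, where $\jmath_{\mathrm{geom}}$ is the specific polarization built in \subsectionref{normalized embedding}; a priori the abstract polarization $\jmath$ could fail to be $\Gamma$-equivalent to $\jmath_{\mathrm{geom}}$, in which case $\omega$ would be a different point of $\calD/\Gamma$ sitting over the same surface and your map would miss it.

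The paper closes this gap by an extra argument that your proposal does not supply: after adjusting by $\pm W(S)$ so that the K\"ahler cone and the set of effective $(-2)$-classes are chosen as in Laza's Theorem 4.1, the restrictions $\jmath|_{M_1}$ and $\jmath|_{M_2}$ to the two degree-$5$-pair sublattices become normalized embeddings, and Laza's uniqueness result (\cite[Lemma 4.29]{laza_n16}) shows these are unique up to permutation of $\alpha_1,\ldots,\alpha_4$ and of $\beta_1,\ldots,\beta_4$. Since $\Sigma_4\times\Sigma_4=\Gamma/O^*(T)$ is exactly the ambiguity built into $\Gamma$, the polarization $\jmath$ is forced to coincide with $\jmath_{\mathrm{geom}}$ up to $\Gamma$, and surjectivity follows. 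You gesture at the sublattice decomposition in your closing paragraph, but only as an ingredient of \propositionref{P surjective} itself; the normalization-and-uniqueness step matching $\jmath$ to the geometric polarization is the one piece of content in this theorem's proof beyond the three cited propositions, and it needs to be stated explicitly.
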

\begin{proof}
It suffices to prove that $\calP$ is surjective. Let $\omega \in \calD/\Gamma$ be a period point. By the surjectivity of the period map of lattice-polarized $K3$ surfaces (see \cite[Theorem 3.1]{dolgachev_MSK3}) there exists an $M$-polarized $K3$ surface $(S, \jmath: M \hookrightarrow \Pic(S))$ corresponding to $\omega$. By \propositionref{P surjective} the $K3$ surface $S$ is the double cover of $\mathbb P^2$ branched at a plane quartic curve $C$ and two different lines $L_1$, $L_2$. Moreover, let $M_1$ and $M_2$ be the primitive sublattices of $M$ defined in the proof of \propositionref{P surjective}. After choosing the K\"ahler cone $V^+(S)$ and the set of effective $(-2)$ curves $\Delta^+(S)$ as in the proof of \cite[Theorem 4.1]{laza_n16}, we may assume that the restrictions $\jmath|_{M_1}$ and $\jmath|_{M_2}$ of the polarization $\jmath$ to the sublattices $M_1$ and $M_2$ are both normalized embeddings, as defined in \cite[Definition 4.24]{laza_n16}. The embeddings $\jmath|_{M_1}$ and $\jmath|_{M_2}$ are unique up to permutation of the classes $\alpha_1,\ldots, \alpha_4$ ($\beta_1,\ldots, \beta_4$, respectively) thanks to \cite[Lemma 4.29]{laza_n16}. It follows that the polarization $\jmath$ is unique up to action of $\Sigma_4\times\Sigma_4$ and coincides with our construction in \subsectionref{normalized embedding}. By \propositionref{P birational} and \propositionref{P injective} the period map $\calP: \calM_0 \rightarrow \calD/\Gamma$ is a bijective birational morphism between normal varieties. As a result, $\calP$ is an isomorphism, by Zariski's Main Theorem.
\end{proof}

\subsection{The period map for unlabeled triples} \label{unlabeled P'}
Consider the compact space $\mathcal M^*$
and $\calM_0^* \subset \overline{\calM}^*$, consisting on the subset of triples $(C, L, L')$ formed by a quartic curve $C$ and unlabeled lines $L$, $L'$ such that the sextic curve $C + L + L'$ is reduced and has at worst simple singularities, as constructed in \corollaryref{cor:gitcll'}. In this subsection we define the period map $\calP': \calM_0^* \rightarrow \calD/\Gamma'$ ---for an appropriately chosen arithmetic group $\Gamma'$--- and show that $\calP'$ is an isomorphism. We use the same approach taken to define $\calP: \calM_0 \rightarrow \calD/\Gamma$ (\propositionref{P birational} and \propositionref{P injective}) and to prove \theoremref{P isomorphic}. The modification one needs to do is to choose a different arithmetic group $\Gamma'$. We follow the same notation as in the previous subsections, especially regarding the description of the subgroup $\Sigma_\alpha \times \Sigma_\beta$ in \lemmaref{permutation}. Consider the subgroup $\Sigma_\alpha \times \Sigma_\beta \rtimes \bZ/2\bZ \subset O(M)$ where the factor $\bZ/2\bZ$ corresponds to the swap of $\alpha_i$'s and $\beta_i$'s for $0 \leqslant i \leqslant 4$ (the induced action on $A_M$ exchanges $\alpha_i^*$ with $\beta_i^*$ for $1 \leqslant i \leqslant 3$ and fixes $\gamma^*$ and $\xi^*$). As in the proof of \lemmaref{permutation}, we can verify that the composition $\Sigma_\alpha \times \Sigma_\beta \rtimes \bZ/2\bZ  \hookrightarrow O(M) \rightarrow O(q_M)$ is injective. Now we define $\Gamma'$ to be the following extension: $$1 \rightarrow O^*(T) \rightarrow \Gamma' \rightarrow (\Sigma_\alpha \times \Sigma_\beta) \rtimes \bZ/2\bZ \rightarrow 1.$$ 

For $(C,L,L') \in \calM_0^*$ (such that $C+L+L'$ is reduced and has at worst simple singularities) we consider the period of the $K3$ surface $S_{(C,L,L')}$ which is the minimal resolution of the double cover of $\bP^2$ branched along $C+L+L'$. Because $S_{(C,L,L')}$ is polarized by the lattice $M$, the period corresponds to a point in $\calD$. The lattice polarization depends on the labeling of $L$ and $L'$, the ordering of $C \cap L$ and the ordering of $C \cap L'$, and thus is compatible with the action of $\Gamma'/O^*(T) = \Sigma_4 \times \Sigma_4 \rtimes \bZ/2\bZ$. Therefore we have a well-defined period map $\calP': \calM_0^* \rightarrow \calD/\Gamma'$ (see also \propositionref{P birational} and \propositionref{P injective}). Moreover, the same argument in \propositionref{P injective} and \theoremref{P isomorphic} allows us to prove that the period map $\calP': \calM_0^* \rightarrow \calD/\Gamma'$ is an isomorphism. 

\subsection{Comparison of the GIT and the Baily-Borel compactifications} \label{sec:GITBB}

Consider the moduli space $\calM_0^* \subset \overline{\calM}^*$ of triples $(C, L, L')$ formed by a quartic curve $C$ and unlabeled lines $L$, $L'$ such that the sextic curve $C + L + L'$ has at worst simple singularities. We have constructed a period map $\calP': \calM_0^* \rightarrow \calD/\Gamma'$ in \subsectionref{unlabeled P'} and have shown that it is an isomorphism. There are two natural ways to compactify $\calM_0^*$ as the GIT quotient $\overline{\calM}^*\coloneqq\overline{\calM}^*_{1,4,2}$ defined in \eqref{eq:GIT_unlabeled} and described in \corollaryref{cor:gitcll'}, or as the Baily-Borel compactification \cite{bailyborel}. We compare these two compactifications by applying some general results of Looijenga \cite{looijenga}. See also \cite[Theorem 4.2]{laza_n16}. 
\begin{theorem} \label{P'gitbb}
The period map $\calP': \calM_0^* \rightarrow \calD/\Gamma'$ extends to an isomorphism of projective varieties $\overline{\calP'}: \overline{\calM}^* \stackrel{\cong}{\rightarrow} (\calD/\Gamma')^*$ where $(\calD/\Gamma')^*$ denotes the Baily-Borel compactification of $\calD/\Gamma'$.  
\end{theorem}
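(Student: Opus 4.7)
The plan is to follow the Looijenga/Laza strategy for comparing GIT quotients with Baily-Borel compactifications of Type IV locally symmetric domains, in exact analogy with \cite[Theorem 4.2]{laza_n16}. The inputs already available are: (i) $\calP'\colon\calM_0^* \to \calD/\Gamma'$ is an isomorphism of quasi-projective varieties (the unlabeled version of \theoremref{P isomorphic} established in \subsectionref{unlabeled P'}); (ii) by \corollaryref{cor:gitcll'} the complement $\overline{\calM}^* \setminus \calM_0^*$ is a union of three points and five rational curves, and \lemmaref{corollary:dimension-moduli} gives $\dim \overline{\calM}^* = 10$, so the boundary has codimension nine; (iii) both $\overline{\calM}^*$ and $(\calD/\Gamma')^*$ are normal projective varieties, as a GIT quotient and as a Baily-Borel compactification respectively.

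The first step is to extend $\calP'$ to a morphism $\overline{\calP'}\colon \overline{\calM}^* \to (\calD/\Gamma')^*$. Let $\widetilde{\calM} \to \overline{\calM}^*$ be a resolution of singularities whose exceptional divisor together with the preimage of $\overline{\calM}^* \setminus \calM_0^*$ is a simple normal crossings divisor. The $K3$ variation of Hodge structure carried by the universal family $\{S_{(C,L,L')}\}$ over $\calM_0^*$ has quasi-unipotent local monodromies around the boundary by the monodromy theorem, so Borel's extension theorem for period maps into Hermitian symmetric domains extends the composite $\widetilde{\calM}\setminus D \to \calD/\Gamma'$ to a morphism $\widetilde{\calM}\to(\calD/\Gamma')^*$. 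Since the boundary has codimension $\geqslant 2$ in the normal variety $\overline{\calM}^*$, a Stein factorization argument (using that the map already descends on the dense open $\calM_0^*$) shows that this morphism descends to a well-defined morphism $\overline{\calP'}\colon\overline{\calM}^*\to(\calD/\Gamma')^*$ which is birational and extends $\calP'$.

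The second step is to identify the natural ample $\bQ$-line bundles. On $\overline{\calM}^*$ the symmetric GIT polarization $\overline{\calL_*}$ of \eqref{eq:GIT_unlabeled} descends to an ample $\bQ$-line bundle $L_{GIT}$, which on $\calM_0^*$ is (up to rational multiple) the pullback of the $\SL_3$-equivariant hyperplane bundle on the parameter space of plane sextics under $(C,L,L') \mapsto C+L+L'$. On $(\calD/\Gamma')^*$ the Baily-Borel polarization is the automorphic line bundle $\lambda$ of $\Gamma'$-modular forms; under the period map for degree-two $M$-polarized $K3$ surfaces, $\lambda$ pulls back to the Hodge line bundle $\det \pi_* \omega_{\calS/\calM_0^*}$ of the universal family $\calS \to \calM_0^*$. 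Since $S_{(C,L,L')}$ is the minimal resolution of the double cover of $\bP^2$ branched along the sextic $C+L+L'$ and the resolution only introduces $(-2)$-curves (which do not contribute to $H^{2,0}$), the Hodge line bundle of this family coincides, up to a finite power, with the Hodge line bundle of the family of plane sextics, yielding $\overline{\calP'}^*\lambda^{\otimes a} \cong L_{GIT}^{\otimes b}$ on $\calM_0^*$ for positive integers $a, b$; by normality and codimension considerations this identification extends over all of $\overline{\calM}^*$.

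Finally, since $\overline{\calM}^* = \Proj\bigoplus_m H^0(\overline{\calM}^*, L_{GIT}^{\otimes m})$ and $(\calD/\Gamma')^* = \Proj\bigoplus_m H^0((\calD/\Gamma')^*, \lambda^{\otimes m})$, and $\overline{\calP'}$ pulls back the ample $\bQ$-line bundle $\lambda$ to a positive rational multiple of $L_{GIT}$, the induced map on graded rings of sections is an isomorphism in sufficiently divisible degrees, and taking $\Proj$ yields the desired isomorphism $\overline{\calP'}\colon\overline{\calM}^*\xrightarrow{\cong}(\calD/\Gamma')^*$. I expect the main obstacle to lie in step two, namely the precise verification that $\overline{\calP'}^*\lambda$ and $L_{GIT}$ agree as $\bQ$-line bundles rather than merely being proportional up to an uncontrolled factor; this requires tracking the Hodge-theoretic asymptotics of $S_{(C,L,L')}$ along each boundary stratum in \figureref{fig:incidence} and checking that the corresponding Type II and Type III components of $(\calD/\Gamma')^*$ (computed from the isotropic sublattices of $T$) match the GIT boundary components $\overline{III}(\bullet)$ and $\overline{II}(\bullet)$ of \corollaryref{cor:gitcll'} with the correct incidence pattern.
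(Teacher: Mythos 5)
Your proposal takes essentially the same route as the paper: the paper's proof consists of invoking Looijenga's general comparison theorem \cite[Theorem 7.6]{looijenga} after verifying exactly the three inputs you isolate --- that $\calP'$ is an isomorphism onto $\calD/\Gamma'$ with empty hyperplane arrangement (so the Looijenga compactification is Baily-Borel), that the boundary has codimension $>1$ by \corollaryref{cor:gitcll'} and \lemmaref{corollary:dimension-moduli}, and that the polarizations agree by restricting the matching polarizations for plane sextics and degree $2$ $K3$ surfaces from \cite[\S 8]{looijenga} --- so your steps are an unpacking of the proof of that cited theorem rather than a different argument. One small remark: for the final $\Proj$ step, proportionality $\overline{\calP'}^*\lambda^{\otimes a}\cong L_{GIT}^{\otimes b}$ already suffices since a graded ring and its Veronese subrings have the same $\Proj$, so the concern you flag about pinning down the exact factor is not an obstacle.
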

\begin{proof}
We apply a general framework of comparing GIT compactifications to certain compactifications of the period domain developed by Looijenga. Specifically, by \cite[Theorem 7.6]{looijenga} an isomorphism $\calM \cong (\Omega \backslash \calH)/\Lambda$ (typically coming from a period map) between a geometric quotient $\calM$ and a complement of an arithmetic hyperplane arrangement $\calH$ in a type IV domain $\Omega$ extends to an isomorphism $\overline{\calM} \cong \widetilde{\Omega/\Lambda}$ between the GIT compactification $\overline{\calM}$ and the Looijenga compactification $\widetilde{\Omega/\Lambda}$ associated to $\calH$ if their polarizations agree and $\dim(\overline{\mathcal M})-\dim(\mathcal M)>1$. We have $\calM_0^* \cong \calD/\Gamma'$. The hyperplane arrangement is empty and the associated Looijenga compactification is the Baily-Borel compactification $(\calD/\Gamma')^*$. Moreover, by \corollaryref{cor:gitcll'} and \lemmaref{corollary:dimension-moduli}, we have
$$\dim(\overline{\mathcal M}^*)-\dim(\mathcal M^*_0)=\dim(\overline{\mathcal M}^*)-1>1,$$
and their polarizations agree by restriction of the isomorphic polarizations for the GIT of sextic curves and for the compact moduli of K3 surfaces of degree $2$ (see \cite[\S8]{looijenga}). Hence, by \cite[Theorem 7.6]{looijenga} $\calP': \calM_0^* \rightarrow \calD/\Gamma'$ is an isomorphism for polarized varieties. 
\end{proof}


\begin{question}
\label{que:isomorphism-before-quotient}
Does the period map for labeled triples $\calP: \calM_0 \rightarrow \calD/\Gamma$ (cf. \theoremref{P isomorphic}) preserve the natural polarizations?
\end{question}
A positive answer to this question would imply that the period map $\calP: \calM_0 \rightarrow \calD/\Gamma$ can be extended to an isomorphism $\overline{\calP}: \overline{\calM}(1,1) \rightarrow (\calD/\Gamma)^*$. We strongly believe that the answer is yes (by pulling back the polarizations for sextic curves and degree $2$ $K3$ surfaces via the double covers $\overline{\calM}(1,1) \rightarrow \overline{\calM}^*$ and $\calD/\Gamma \rightarrow \calD/\Gamma'$). 

\subsection{The Baily-Borel compactification} \label{sec:BB}

The locally symmetric space $\calD/\Gamma'$ admits a canonical minimal compactification, the Baily-Borel compactification $(\calD/\Gamma')^*$ (cf. \cite{bailyborel}). The boundary components of $(\calD/\Gamma')^*$ are either $0$-dimensional (Type III components) or $1$-dimensional (Type II components), and they correspond to the primitive rank $1$, respectively, rank $2$ isotropic sublattices of $T$ up to $\Gamma'$-equivalence. Following the approach of \cite{scattone}, \cite{sterk_enriques} and \cite{laza_n16}, we determine the number of the Type III boundary components of $(\calD/\Gamma')^*$ and compute certain invariants for the Type II boundary components. Notice that by Theorem \theoremref{P'gitbb}, the number of these boundary components and some of their invariants (such as the dimension) can be worked out from the boundary components of the GIT quotient $\overline{\calM}^*$ described in Corollary \ref{cor:gitcll'}.

We determine the $0$-dimensional components of $(\calD/\Gamma')^*$ using \cite[Proposition 4.1.3]{scattone}. The $0$-dimensional boundary components are in one-to-one correspondence with the $\Gamma'$-orbits of primitive isotropic rank $1$ sublattices of $T$. By \propositionref{determine M T} and \cite[Theorem 3.6.2]{nikulin} we have $T \cong U \perp U(2) \perp A_1^2 \perp D_6 \cong U \perp U \perp A_1^4 \perp D_4$. (In particular, $T$ contains two hyperbolic planes.) Write $\Gamma^* = \Sigma_\alpha \times \Sigma_\beta \rtimes \bZ/2\bZ \subset O(q_T)$ (see \subsectionref{unlabeled P'}). Note that for $v \in T$ one can associate a vector $\bar{v} \in A_T=T^*/T$ defined by $\bar{v} \equiv \frac{v}{\mathrm{div}(v)} \mod T$ (where $\mathrm{div}(v)$ is the divisor of $v$ which is a positive integer such that $(v,T)=\mathrm{div}(v)\bZ$). If $v$ is a primitive isotropic vector then $\bar{v}$ is an isotropic element in $A_T$. By \cite[Proposition 4.1.3]{scattone} the map $\bZ v \mapsto \bar{v}$ induces a bijection between the equivalence classes of primitive isotropic rank $1$ sublattices of $T$ and $\Gamma^*$-orbits of isotropic elements of $A_T$. Because $T$ is the orthogonal complement of $M$ in $\Lambda_{K3}$, one has $(A_M, q_M) \cong (A_T, -q_T)$. 
We have computed the discriminant quadratic form $q_M$ in \lemmaref{A_M}. In particular, there are $20$ isotropic elements in $A_M \cong A_T$: $0, \gamma^*$, $\alpha_i^*+\beta_j^*$ and $\alpha_i^*+\beta_j^*+\gamma^*$ ($1 \leqslant i,j \leqslant 3$). The action of $\Gamma^*$ has been described in the proof of \lemmaref{permutation} and \subsectionref{unlabeled P'}. It is easy to see that $\alpha_i^*+\beta_j^*$ and $\alpha_i^*+\beta_j^*+\gamma^*$ ($1 \leqslant i,j \leqslant 3$) form one $\Gamma^*$-orbit. As a result, the Baily-Borel compactification $(\calD/\Gamma')^*$ consists of three $0$-dimensional boundary components.   

\begin{remark}
Similarly, one can show that $(\calD/\Gamma)^*$ has three $0$-dimensional boundary components (compare \lemmaref{lemma:Bd11}).\end{remark}

\begin{remark}
As discussed in \cite[\S 4.4.1]{laza_n16}, one important invariant for the $O_-(T)$-equivalence class of isotropic sublattices $E$ of $T$ is the isomorphism classes of $E^{\perp}/E$ (and we shall use it to label $E$). Let us compute the isomorphism classes of $v^{\perp}/\bZ v$ (where $\bZ v$ is a primitive isotropic rank $1$ sublattice of $T$). Observe that $T \cong U \perp M$. One could compute the Gram matrix of $v^{\perp}/\bZ v$ explicitly. Alternatively, we consider $H_v:=\bZ v_{T^*}^{\perp \perp}/\bZ v$ (cf. \cite[\S4.4.1]{laza_n16}) which is an isotropic subgroup of $A_T \cong (\bZ/2\bZ)^6$ and the discriminant group $A_{v^{\perp}/\bZ v} \cong H_v^{\perp}/H_v$. In our case, $H_v$ equals either $0$ or $\bZ/2\bZ$. The lattice $v^{\perp}/\bZ v$ is an even hyperbolic (N.B. the signature is $(1,9)$) $2$-elementary lattice. By a direct computation we get the following (see also \cite[Theorem 3.6.2]{nikulin}).
\begin{itemize}
\item If $\bar{v}=0$, then $v^{\perp}/\bZ v \cong U \perp A_1^4 \perp D_4 \cong U(2) \perp A_1^2 \perp D_6$.
\item If $\bar{v}=\gamma^*$, then $v^{\perp}/\bZ v  \cong U \perp D_4 \perp D_4 \cong U(2) \perp D_8$.
\item If $\bar{v}=\alpha_i^*+\beta_j^*$ or $\alpha_i^*+\beta_j^*+\gamma^*$ ($1 \leqslant i,j \leqslant 3$), then $v^{\perp}/\bZ v \cong U \perp A_1^2 \perp D_6 \cong U(2) \perp A_1 \perp E_7$.
\end{itemize}
\end{remark}

To determine the $1$-dimensional components of $(\calD/\Gamma')^*$, one needs to compute the equivalence classes of primitive isotropic rank $2$ sublattices of $T$. We use the algorithm for classifying isotropic vectors in hyperbolic lattices due to Vinberg \cite{vinberg}. Specifically, for each of the equivalence classes of primitive isotropic rank $1$ sublattices $\bZ v$ of $T$ we apply Vinberg's algorithm to the hyperbolic lattice $v^{\perp}/\bZ v$ (with respect to the action by the stabilizer $\Gamma'_v$ of $v$). 

Now we briefly recall Vinberg's algorithm \cite{vinberg} (see also \cite[\S4.3]{sterk_enriques}). Let $N$ be a hyperbolic lattice of signature $(1,n)$. (In our case we take $N=v^{\perp}/\bZ v$.) The algorithm starts by fixing an element $h \in N$ of positive square. Then one needs to inductively choose roots $\delta_1, \delta_2, \ldots$ such that the distance function $\frac{(h,\delta)^2}{|(\delta,\delta)|}$ is minimized. The algorithm stops with the choice of $\delta_N$ if every connected parabolic subdiagram (i.e. the extended Dynkin diagram of a root system) of the Dynkin diagram $\Sigma$ associated to the roots $\delta_1, \delta_2, \ldots, \delta_N$ is a connected component of some parabolic subdiagram of rank $n-1$. If the algorithm stops then the $W(N)$-orbits of the isotropic lines in $N$ correspond to the parabolic subdiagrams of rank $n-1$ of $\Sigma$ (N.B. the isomorphism classes of $E^{\perp}/E$, where $E$ is an isotropic rank $2$ sublattice of $T$ containing $v$, are determined by the Dynkin diagrams of the parabolic subdiagrams). To determine the equivalence classes of the isotropic vectors by a larger group which contains the Weyl group $W(N)$ as a subgroup of finite index, one should take certain symmetries of $\Sigma$ into consideration. 

In our case, a straightforward application of Vinberg's algorithm allows us to compute the isomorphism classes of $E^{\perp}/E$.  
\begin{itemize}
\item If $\bar{v}=0$, then $v^{\perp}/\bZ v$ has at least three equivalence classes of isotropic vectors which correspond to $A_1^4 \perp D_4$, $A_1^2 \perp D_6$ and $D_4 \perp D_4$ respectively.  
\item If $\bar{v}=\gamma^*$, then $v^{\perp}/\bZ v$ has at least two equivalence classes of isotropic vectors which correspond to $D_4 \perp D_4$ and $D_8$ respectively.
\item If $\bar{v}=\alpha_i^*+\beta_j^*$ or $\alpha_i^*+\beta_j^*+\gamma^*$ ($1 \leqslant i,j \leqslant 3$), then $v^{\perp}/\bZ v$ has at least three equivalence classes of isotropic vectors which correspond to $A_1^2 \perp D_6$, $A_1 \perp E_7$  and $D_8$ respectively.
\end{itemize}
By \theoremref{P'gitbb} and \corollaryref{cor:gitcll'} we conclude that the Baily-Borel compactification $(\calD/\Gamma')^*$ consists of five $1$-dimensional boundary components labeled by $A_1^4 \perp D_4$, $A_1^2 \perp D_6$, $A_1 \perp E_7$, $D_4 \perp D_4$ and $D_8$ respectively.

\begin{remark}
Using the Clemens-Schmid exact sequence and the incidence relation of the GIT boundary components (see also \cite[Theorem 4.32]{laza_n16}), we match the GIT boundary of $\overline{\calM}^*$ in \corollaryref{cor:gitcll'} with the Baily-Borel boundary of $(\calD/\Gamma')^*$.

\begin{center}
\begin{tabular}{cc}
\hline
GIT boundary & Baily-Borel boundary  \\
\hline 
$\overline{II}(1)$ & $D_4 \perp D_4$ \\
$\overline{II}(2a1)$ & $A_1 \perp E_7$ \\
$\overline{II}(2a2)$ & $A_1^4 \perp D_4$ \\
$\overline{II}(2b)$ & $A_1^2 \perp D_6$\\
$\overline{II}(3)$ & $D_8$\\
\hline
$\overline{III}(1)$ & $U \perp D_4 \perp D_4$ \\
$\overline{III}(2a)$ & $U \perp A_1^2 \perp D_6$ \\
$\overline{III}(2b)$ & $U \perp A_1^4 \perp D_4$\\
\hline
\end{tabular}
\end{center}
\end{remark}

\bibliography{ref}

\end{document}